\DeclareMathOperator{\image}{''}
\DeclareMathOperator{\acc}{acc}
\DeclareMathOperator{\dom}{dom}
\DeclareMathOperator{\range}{range}
\DeclareMathOperator{\otp}{otp}
\DeclareMathOperator{\cof}{cf}
\DeclareMathOperator{\crit}{crit}
\DeclareMathOperator{\cf}{cf}
\DeclareMathOperator{\Col}{Col}
\newcommand{\chang}{\twoheadrightarrow}
\newcommand{\Lowenheim}{L\"{o}wenheim\xspace}
\newcommand{\Todorcevic}{Todor\v{c}evi\'{c}\xspace}
\newcommand{\Godel}{G\"{o}del\xspace}
\newcommand{\Erdos}{Erd\H{o}s\xspace}
\newcommand{\ZFC}{{\rm ZFC}\xspace}
\newcommand{\GCH}{{\rm GCH}\xspace}
\newcommand{\PCF}{{\rm PCF}\xspace}
\newcommand{\xrightarrowdbl}[2][]{%
  \xrightarrow[#1]{#2}\mathrel{\mkern-14mu}\rightarrow
}
\newtheorem{theorem}{Theorem}
\newaliascnt{example}{theorem}
\newaliascnt{fact}{theorem}
\newaliascnt{corollary}{theorem}
\newtheorem{corollary}[corollary]{Corollary}
\newaliascnt{lemma}{theorem}
\newtheorem{lemma}[lemma]{Lemma}
\newaliascnt{claim}{theorem}
\theoremstyle{definition}
\newaliascnt{definition}{theorem}
\newtheorem{definition}[definition]{Definition}
\newtheorem{question}{Question}
\newtheorem*{theorem*}{Theorem}
\newtheorem*{remark}{Remark}
\newtheorem*{example*}{Example}
\newcommand{\MM}{Q^{<\omega}}
\newcommand{\mmchang}{\xrightarrowdbl[\MM]{}}
\begin{document}

\title{Magidor-Malitz Reflection}
\author{Yair Hayut}
\begin{abstract}
In this paper we investigate the consistency and consequences of the downward \Lowenheim-Skolem-Tarski theorem for extension of the first order logic by the Magidor-Malitz quantifier.
We derive some combinatorial results and improve the known upper bound for the consistency of Chang's Conjecture at successor of singular cardinals.
\end{abstract}
\maketitle
\section{Introduction}
Let $M$ be a model of size $\lambda$ over countable language and let $\kappa$ be an infinite cardinal below $\lambda$.
The downward \Lowenheim-Skolem-Tarski theorem says that there is an elementary submodel $N\prec M$ such that $|N| = \kappa$.
This is one of the most basic results in model theory, and it can be viewed as a reflection principle. The metamathematical object which is reflected here is the first order logic. The theorem asserts that if $M$ is a model of some first order sentence and $|M| > \aleph_0$, then a \emph{strictly smaller} elementary submodel of $M$ already satisfies this sentence.

The \Lowenheim-Skolem-Tarski theorem is extremely useful in model theory, and it is quite natural that mathematicians investigate tentative generalizations of it.
One way to do this is to strengthen the underlying logic, and an important case is second order logic, $\mathcal{L}^2$. In $\mathcal{L}^2$, one can quantify over subsets and predicates of the model and thus express much more of its behavior.

Moving to second order properties, we are catapulted into the realm of large cardinals:
\begin{theorem}[Magidor]\cite{Magidor1971}
Assume that there is a cardinal $\kappa$ such that for every model $M$ with countable language, there is $N\prec_{\mathcal{L}^2} M$, $|N| < \kappa$, then there is a supercompact cardinal $\leq\kappa$.

Indeed, even full $\Pi^1_1$-reflection implies the existence of a supercompact cardinal.
\end{theorem}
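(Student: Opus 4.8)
The plan is to run Magidor's embedding characterization of supercompactness in reverse: instead of deriving reflection from a supercompact, I would extract from the reflection hypothesis a family of elementary embeddings rich enough to witness supercompactness of some $\kappa_0\le\kappa$. The engine I would invoke as a black box is the characterization that a cardinal $\kappa_0$ is supercompact if and only if for every $\eta$ and every $a\in V_\eta$ there is an elementary embedding $j\colon V_{\bar\eta}\to V_\eta$ with $\bar\eta<\kappa_0$, $a\in\range j$, $\crit j<\kappa_0$, and $j(\crit j)=\kappa_0$. Thus it suffices to manufacture such \emph{Magidor embeddings} from the downward \Lowenheim-Skolem-Tarski property and to pin the target ordinal $j(\crit j)$ to a single value $\le\kappa$.

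First I would produce the embeddings. Fix $\eta$ and form, in a countable language, the structure $M=(V_\eta,\in,S,\kappa,\dots)$, where $S$ is adjoined as a genuine full satisfaction predicate for $(V_\eta,\in,\dots)$ and $\kappa$ is a constant. Let $\varphi$ be the single sentence asserting that $S$ obeys the Tarski recursion (a first-order condition once $S$ is primitive), that enough of \ZFC holds, and the $\Pi^1_1$ power-correctness clause
\[ \forall Z\,\forall x\,\bigl[(\forall w)(Z(w)\to w\in x)\to(\exists v)(\forall w)(w\in v\leftrightarrow Z(w))\bigr]. \]
Then $M\models\varphi$, witnessed by $S$ itself. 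Applying the hypothesis (full $\mathcal{L}^2$ reflection, or already $\Pi^1_1$ reflection for this one $\varphi$) yields a substructure $N\models\varphi$ with $|N|<\kappa$, the second-order quantifier interpreted by the \emph{genuine} subsets of $N$. Since $N$ carries the true $\in$ it is automatically well-founded and extensional; the power-correctness clause, read under full semantics, forces $N$ to contain every real subset of each of its elements, so the transitive collapse of $N$ is a genuine $V_{\bar\eta}$ with $\bar\eta<\kappa$; and the satisfaction clause forces $N\prec M$, whence the inverse collapse $j\colon V_{\bar\eta}\to V_\eta$ is elementary. As $\kappa\in N=\range j$, we obtain $\crit j<\kappa$ and $j(\crit j)\le\kappa$.

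To pin $\kappa_0$ and thread parameters despite the countable-language restriction, I would argue by contradiction. If no $\kappa_0\le\kappa$ were supercompact, the characterization would supply, for each such $\kappa_0$, a height $\eta_{\kappa_0}$ and a bad parameter $a_{\kappa_0}\in V_{\eta_{\kappa_0}}$; set $\vec a=\tup{(\eta_{\kappa_0},a_{\kappa_0}):\kappa_0\le\kappa}$, a single set, and choose $\eta$ above all $\eta_{\kappa_0}$. Running the construction above on $M=(V_\eta,\in,S,\kappa,\vec a)$ gives $j\colon V_{\bar\eta}\to V_\eta$ with $\vec a,\kappa\in\range j$. Put $\kappa_0:=j(\crit j)\le\kappa$. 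Since both $\vec a$ and $\kappa_0$ lie in $\range j$, elementarity delivers $(\eta_{\kappa_0},a_{\kappa_0})=\vec a(\kappa_0)\in\range j$; restricting $j$ to $V_{j^{-1}(\eta_{\kappa_0})}$ then yields a Magidor embedding for $\kappa_0$ into $V_{\eta_{\kappa_0}}$ with $a_{\kappa_0}$ in its range, contradicting the choice of $a_{\kappa_0}$. Hence some $\kappa_0\le\kappa$ is supercompact.

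The main obstacle is the encoding in the second paragraph: one must see that $\Pi^1_1$ reflection \emph{alone} already yields a genuine elementary embedding. Well-foundedness is free because $N$ carries the real $\in$; the two nontrivial points are (i) reducing the elementarity scheme $N\prec M$ to a single sentence by carrying $S$ as a primitive relation, so that only power-correctness consumes the second-order strength, and (ii) verifying that full semantics on the reflected $N$ genuinely forces the collapse to be a true $V_{\bar\eta}$. A secondary but real difficulty is the bookkeeping required to meet the exact hypotheses of the characterization — in particular securing $\bar\eta<\kappa_0$ — which in the cleanest case one handles by arranging $N\cap\kappa$ to be transitive, forcing $\crit j$ to be the collapse of $\kappa$ and hence $\kappa_0=\kappa$.
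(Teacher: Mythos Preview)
The paper does not contain its own proof of this theorem: it is quoted as a result of Magidor, cited to \cite{Magidor1971}, and stated without argument in the introduction. So there is nothing in the paper to compare your proposal against.

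That said, your outline is essentially Magidor's original argument and is sound in its main moves. Encoding power-correctness as the single $\Pi^1_1$ clause, carrying a satisfaction predicate so that full elementarity of $N$ in $M$ becomes first-order over the expanded language, and then collapsing to a genuine $V_{\bar\eta}$ is exactly the right mechanism; the contradiction bookkeeping via a single parameter $\vec a$ coding all putative counterexamples is the standard way to thread the ``for every $a$'' clause through a countable language.

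The one point you correctly flag but do not fully resolve is securing $\bar\eta<\kappa_0$ rather than merely $\bar\eta<\kappa$. Your suggested fix---arranging $N\cap\kappa$ transitive so that $\kappa_0=\kappa$---is not something the bare reflection hypothesis hands you. The clean way around this is to let $\kappa_0$ be the \emph{least} cardinal with the reflection property (such a least cardinal exists, and is $\le\kappa$), and then run the construction with $\kappa_0$ in place of $\kappa$: the reflected model has size $<\kappa_0$, hence $\bar\eta<\kappa_0$, and one checks that $j(\crit j)$ cannot fall strictly below $\kappa_0$ because that would witness reflection to a smaller bound, contradicting minimality. With that adjustment the argument closes.
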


Magidor's theorem demonstrates an important difference between first order and second order logic. Focusing on first order logic, there are essentially no set-theoretical restrictions on our ability to reflect valid sentences. But if we wish to reflect all the second order properties, we need a supercompact cardinal.

In this paper we try to examine what happens at some intermediate logics between first order and second order logic. We shall focus mostly on first order logic extended by the following quantifiers:
\begin{definition}[Magidor-Malitz Quantifiers]
Let $M$ be a model in the language $\mathcal{L}$. For a formula $\varphi(x_0, x_1, \dots, x_{n-1}, p_0, \dots, p_{m-1})$, we write \[M\models Q^n x_0, \dots, x_{n-1} \varphi(x_0, x_1, \dots, x_{n-1}, p_0, \dots, p_{m-1})\] if there is a set $A\subseteq M$ with $|A| = |M|$ such that \[\forall a_0, a_1, \dots, a_{n-1}\in A, \,M\models \varphi(a_0, \dots, a_{n-1}, p_0, \dots, p_{m-1}).\]

We write $M \prec_{Q^n} N$ if $M$ is an elementary submodel of $N$ with respect to first order logic enriched with the quantifier $Q^n$. We Write $M \prec_{\MM} N$ if $M\prec_{Q^n} N$ for all $n < \omega$.
\end{definition}

These quantifiers that were defined by Menachem Magidor and Jerome Malitz in \cite{MagidorMalitz1977}. In this paper, Magidor and Malitz proved that if the set theoretical principle $\diamondsuit(\omega_1)$ holds, then this quantifier satisfies a certain compactness theorem for models of size $\aleph_1$. This was generalized by Shelah in \cite{Shelah-MM-compactness-lambda} to arbitrary successor of regular cardinal, $\lambda^+$, under the assumption $\diamondsuit(\lambda) + \diamondsuit(\lambda^+)$. In \cite{ShelahRubin}, Shelah and Rubin showed that the the quantifiers $Q^n$ form a strict hierarchy and discussed some cases in which compactness fails.

Basically, the Magidor-Malitz quantifiers express \emph{some} of the second order properties of the model. This weakening enables us to get consistently some variants of the \Lowenheim-Skolem-Tarski theorem at accessible cardinals. See for example Theorem~\ref{thm: mm reflection from huge} and section \ref{subsec: mm to aleph1 successor to regular}. The hierarchy of languages between first order and second order logic reflects at the size of the large cardinals which are needed in order to get reflection principles for those logics. For example, one can get $Q^1$-reflection at successor cardinal, starting from subcompact cardinal, but in order to get a similar reflection principle relative to $Q^{<\omega}$ using the same approach, one needs $\Pi^1_1$-subcompact cardinal.

A more concrete approach is related to the celebrated Chang's conjecture. Recall:
\begin{definition}
Let $\kappa, \lambda, \mu, \nu$ be cardinals, $\kappa > \lambda$, $\mu > \nu$. We say that $(\kappa, \lambda)\xrightarrowdbl{} (\mu, \nu)$ is for every model $M$ of the countable language $\mathcal{L}$ with distinct unary predicate $A$ such that $|M| = \kappa$ and $|A|=\lambda$ there is an elementary submodel $N\prec M$ such that $|N| = \mu, |A\cap N| = \nu$.
\end{definition}
Chang's conjecture is a natural strengthening of the model theoretical two cardinals theorems of Vaught and Chang. There is an extensive literature about Chang's conjecture for various parameters. See, for example, \cite[section 7.3]{ChangKeisler1990}.

It turned out that some instances of Chang's conjecture are equivalent to reflection of sentences with the Chang's quantifier $Q^1$ (see below, Lemma~\ref{lem: cc for q1 ref}). Some results in this paper point towards a similarity between Chang's conjecture and the reflection of $Q^{<\omega}$, while other demonstrate the difference between them. For example:
\begin{theorem*}
It is consistent, relative to a $(+\omega+1)$-subcompact cardinal, that \[(\aleph_{\omega+1}, \aleph_{\omega})\chang (\aleph_1, \aleph_0).\]
\end{theorem*}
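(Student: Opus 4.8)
The plan is to route the statement through a reflection principle for the Chang quantifier $Q^1$ and then force that principle from the large cardinal. By Lemma~\ref{lem: cc for q1 ref} it suffices to produce a model in which every structure $M$ of size $\aleph_{\omega+1}$ in a countable language admits a submodel $N\prec_{Q^1} M$ with $|N| = \aleph_1$. The advantage of passing through $Q^1$ is that the trace on the predicate then comes for free: applying this to $M = (\aleph_{\omega+1},\in,A,\dots)$ with $A$ naming $\aleph_\omega$, the sentence $\neg Q^1 x\,(x\in A)$ holds in $M$ because $|A| = \aleph_\omega < \aleph_{\omega+1} = |M|$, so it holds in $N$, forcing $|A\cap N| < |N| = \aleph_1$ and hence $|A\cap N| = \aleph_0$. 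Such an $N$ witnesses $(\aleph_{\omega+1},\aleph_\omega)\chang(\aleph_1,\aleph_0)$, so the whole task reduces to forcing the downward \Lowenheim-Skolem-Tarski property for $Q^1$ to size $\aleph_1$ at $\aleph_{\omega+1}$.

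For the forcing I would begin with $\kappa$ a $(+\omega+1)$-subcompact cardinal and first force \GCH together with a reverse-Easton preparation that equips $\kappa$ with a Laver-style guessing function, so that the subcompactness embeddings for the relevant subsets of $H(\kappa^{+\omega+1})$ admit master conditions and survive the main collapse. The main forcing is the Levy collapse $\Col(\omega,{<}\kappa)$, which is $\kappa$-c.c.\ and makes $\kappa = \aleph_1$; since cardinals $\ge \kappa$ are preserved, in the extension $V[G]$ we get $\kappa^{+n} = \aleph_{n+1}$, $\kappa^{+\omega} = \aleph_\omega$ and $\kappa^{+\omega+1} = \aleph_{\omega+1}$. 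This is the same template as the regular-successor construction of Section~\ref{subsec: mm to aleph1 successor to regular}, now applied at the $H(\kappa^{+\omega+1})$-level so as to capture the singular cardinal $\aleph_\omega$.

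Given a structure $M$ on $\aleph_{\omega+1}$ in $V[G]$, I would code it by a name $A\subseteq H(\kappa^{+\omega+1})$ (legitimate since $|\Col(\omega,{<}\kappa)| = \kappa$) and apply $(+\omega+1)$-subcompactness to obtain $\bar\kappa < \kappa$, a name $\bar A$, and an elementary $j\colon (H(\bar\kappa^{+\omega+1}),\in,\bar A)\to (H(\kappa^{+\omega+1}),\in,A)$ with $\crit(j) = \bar\kappa$ and $j(\bar\kappa) = \kappa$. Because $j(\bar\kappa^{+\omega+1}) = \kappa^{+\omega+1}$ matches the internal ``full size'', $j$ transfers $Q^1$ by sending an internal full-size set of witnesses to a genuine size-$\kappa^{+\omega+1}$ set of witnesses; lifting $j$ through the collapse via the guessing function yields $j^{+}$ in $V[G]$, whose image $j^{+}[\bar M]$ is a countable $Q^1$-elementary submodel of $M$. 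To reach size $\aleph_1$ I would amalgamate $\aleph_1$-many such reflections into a continuous, internally approachable tower $\langle N_\xi : \xi<\omega_1\rangle$ of countable $Q^1$-elementary submodels, and set $N = \bigcup_{\xi<\omega_1} N_\xi$; internal approachability is exactly what makes the size-$\aleph_1$ model $N$ interpret $Q^1$ correctly, so that $Q^1$-elementarity passes to the union.

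The hard part is the cardinality bookkeeping along this tower. A single reflection is necessarily countable, since $\Col(\omega,{<}\kappa)$ collapses both $\bar\kappa^{+\omega}$ and $\bar\kappa^{+\omega+1}$ below $\kappa=\aleph_1$; so the whole content lies in growing the tower while keeping each $N_\xi\prec_{Q^1}M$. Maintaining $Q^1$-elementarity forces every successor step to add new elements \emph{only above} $\aleph_\omega$ (otherwise the union would satisfy $Q^1 x\,(x\in A)$ and fail to be $Q^1$-elementary), which is precisely what automatically freezes the trace $N_\xi\cap\aleph_\omega$ to a single countable set. Producing, at each of the $\omega_1$ steps, the required new realizations above the singular cardinal while not enlarging the trace means controlling all $\omega$ cofinal levels $\kappa^{+n}$ of $\aleph_\omega$ simultaneously and coherently; this all-levels-at-once reflection is exactly the extra strength that $(+\omega+1)$-subcompactness provides over the regular-successor case, and verifying that it yields both the internally approachable tower and the preservation of $Q^1$ through it is where essentially all of the work resides.
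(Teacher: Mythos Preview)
Your proposal is not a proof but a plan, and the plan has genuine gaps that you yourself flag at the end. The paper's argument is entirely different in both forcing and method, and avoids the obstacles you run into.

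First, the paper does not use $\Col(\omega,{<}\kappa)$, does not prepare with a Laver-style function, and does not lift any embedding. Instead it forces with $\Col(\omega,\rho^{+\omega})\times\Col(\rho^{+\omega+2},{<}\kappa)$ for a suitable $\rho<\kappa$, so that $\rho^{+\omega+1}$ becomes $\aleph_1$ and $\kappa^{+\omega+1}$ becomes $\aleph_{\omega+1}$. The proof is by contradiction: assume that for \emph{every} $\rho<\kappa$ this forcing fails to produce the Chang instance, pick for each $\rho$ a name $\dot f_\rho$ for a witnessing function, code the whole family as $A\subseteq H(\kappa^{+\omega+1})$, and apply $(+\omega+1)$-subcompactness once to obtain $j\colon\langle H(\rho^{+\omega+1}),\in,B\rangle\to\langle H(\kappa^{+\omega+1}),\in,A\rangle$. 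One then analyzes $\dot f_\rho$ on $R=j''\rho^{+\omega+1}$, finds a finite $n$ with $\rho^{+\omega+1}$ many values below $\kappa^{+n}$, pulls this back to a $2$-coloring of $[\rho^{+\omega+1}]^2$ by pairs $(\eta,q)\in\rho\times V_\rho$, and applies \Erdos--Rado to get a homogeneous set of size $\rho^{+n+1}$; the homogeneous color $(\eta,q)$ then forces an order-type $\rho^{+n+1}$ subset of $\rho^{+n}$, a contradiction. No tower, no master conditions, no $Q^1$ bookkeeping.

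Second, your own approach breaks at the tower. A single lifted embedding gives at best a \emph{countable} $Q^1$-elementary submodel, since $\bar\kappa^{+\omega+1}<\kappa$ is collapsed. Taking an $\omega_1$-union of countable $N_\xi\prec_{Q^1}M$ does not yield $N\prec_{Q^1}M$: the quantifier $Q^1$ changes meaning from ``countable'' in $N_\xi$ to ``size $\aleph_1$'' in $N$, and internal approachability does not bridge that gap. Your claim that successor steps can be arranged to add elements only above $\aleph_\omega$ is exactly the statement that needs proof, and you give no mechanism for producing $\omega_1$ coherent subcompactness reflections with that property. Finally, the reference to Section~\ref{subsec: mm to aleph1 successor to regular} as a template is misplaced: that section builds the $\omega_1$-model via a $\sigma$-strategically closed ideal and $\diamondsuit(\omega_1)$, not by lifting embeddings, and such ideals do not exist at successors of singulars of countable cofinality.
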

But in Question~\ref{ques: mm reflection at aleph omega} we ask whether the same situation can occur at all for $Q^{<\omega}$.

Our notation is standard. We work in \ZFC. We force downwards (namely, $p \leq q \implies p  \Vdash q\in \dot{G}$, where $\dot{G}$ is the canonical name for the generic filter). For a formula $\varphi(x_0, \dots, x_{n-1}, p)$ with free variables $x_0, \dots, x_{n-1}$ and parameter $p$ (for simplicity, we assume that there is only one parameter), a set $A\subseteq M$ is called \emph{$\varphi$-cube} if for all $a_0, \dots, a_{n-1}\in A$, $M\models \varphi(a_0, \dots, a_{n-1}, p)$.

The paper is arranged in three sections. In section \ref{section: the MM quantifier} we define the $\MM$ analogue for Chang's conjecture and derive some reflection principles from it. In section \ref{section: consistency results} we investigate the large cardinals which imply $\MM$ reflection and prove consistency results about some cases of $\MM$ reflection at small cardinals.
\section{Combinatorial Consiqueces of \texorpdfstring{$\MM$}{MM} reflection}\label{section: the MM quantifier}
In this section we analyze the relationship between the reflection of the Magidor-Malitz quantifiers, $\MM$, and some square like principles.

The Magidor-Malitz quantifiers allow us to access some of the second order properties of the model. As we will see, the downward \Lowenheim-Skolem-Tarski theorem for the quantifiers $Q^n$ is a strong reflection principle, yet it consistently holds for some pairs of small cardinals (assuming the consistency of large cardinals).

\begin{definition}
Let $\lambda,\mu$ and $\eta$ be cardinals. $\lambda\xrightarrowdbl[\MM]{\eta} \mu$ iff for every model of cardinality $\lambda$, over a language of cardinality $\eta$, there is a $\MM-$elementary submodel of cardinality $\mu$. When $\eta=\aleph_0$ we write $\lambda\mmchang \mu$. Similarly, $\lambda\xrightarrowdbl[Q^n]{} \mu$ iff for every model of cardinality $\lambda$ there is a $Q^n-$elementary submodel of cardinality $\mu$.

$\lambda\xrightarrowdbl[\MM]{} <\mu$ abbreviates the assertion that for every model of cardinality $\lambda$ there is a $\MM-$elementary submodel of cardinality less than $\mu$.
\end{definition}

Let us recall that a weak instance of the reflection principle $\lambda\mmchang\mu$ is equivalent to Chang's conjecture:
\begin{lemma}[Folklore]\label{lem: cc for q1 ref}
Let $\mu < \lambda$ be cardinals. $\lambda^+\xrightarrowdbl[Q^1]{} \mu^+$ iff $(\lambda^+, \lambda)\chang (\mu^+, \mu)$.
\end{lemma}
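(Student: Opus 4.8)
The plan is to prove both implications by translating the quantifier $Q^1$ into a statement about sizes of definable sets. Observe first that if $N\prec M$ is a first-order elementary submodel and $\varphi(x,\bar y)$ is any formula with parameters $\bar b\in N$, then, writing $\varphi^M(\bar b)=\{x\in M:M\models\varphi(x,\bar b)\}$, first-order elementarity gives $\{x\in N:N\models\varphi(x,\bar b)\}=N\cap\varphi^M(\bar b)$. Hence $M\models Q^1x\,\varphi(x,\bar b)$ iff $|\varphi^M(\bar b)|=|M|=\lambda^+$, while $N\models Q^1x\,\varphi(x,\bar b)$ iff $|N\cap\varphi^M(\bar b)|=|N|=\mu^+$. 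Thus $Q^1$-elementarity of $N$ in $M$ amounts exactly to the two dichotomies: every definable set of full size $\lambda^+$ must leave a trace of size $\mu^+$ on $N$, and every definable set of size $\le\lambda$ must leave a trace of size $\le\mu$.

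For the direction $(\lambda^+,\lambda)\chang(\mu^+,\mu)\Rightarrow\lambda^+\xrightarrowdbl[Q^1]{}\mu^+$, I would, given a model $M$ of size $\lambda^+$ over a countable language, expand it to a model $M^*$ that witnesses these dichotomies. Fix $A\subseteq M$ with $|A|=\lambda$, and for each formula $\varphi(x,\bar y)$ add two function symbols: $f_\varphi$, interpreted so that $f_\varphi(\cdot,\bar b)$ injects $\varphi^M(\bar b)$ into $A$ whenever $|\varphi^M(\bar b)|\le\lambda$ (possible since $|A|=\lambda$), and $g_\varphi$, interpreted so that $g_\varphi(\cdot,\bar b)$ maps $\varphi^M(\bar b)$ onto $M$ whenever $|\varphi^M(\bar b)|=\lambda^+$. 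Applying Chang's conjecture to $M^*$ yields $N\prec M^*$ with $|N|=\mu^+$ and $|A\cap N|\le\mu$. For $\bar b\in N$, first-order elementarity together with the defining formulas of $f_\varphi$ and $g_\varphi$ then give: if $\varphi^M(\bar b)$ is small, its trace injects into $A\cap N$, so $|N\cap\varphi^M(\bar b)|\le\mu<\mu^+$; if it is large, its trace surjects onto $N$, so $|N\cap\varphi^M(\bar b)|=\mu^+$. The reduct of $N$ to the original language is then the desired submodel. Note that only the bound $|A\cap N|\le\mu$ is used here.

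The main obstacle is upgrading this to \emph{full} $Q^1$-elementarity, i.e.\ handling nested occurrences of $Q^1$. I would attach functions $f_\theta,g_\theta$ not only to first-order $\theta$ but to every $Q^1$-formula $\theta$, introducing them in $\omega$ stages (the functions for formulas built from the symbols available at stage $k$ enter at stage $k+1$) so that the language remains countable. The verification then proceeds by induction on the number of nested $Q^1$-quantifiers: for $\psi=Q^1x\,\theta$, the auxiliary injectivity and surjectivity statements attached to $\theta$ have the same $Q^1$-rank as $\theta$ itself, since ordinary universal quantifiers and Boolean connectives do not raise the rank, so they transfer between $M^*$ and $N$ by the induction hypothesis; this drives the dichotomy for $\psi$ exactly as in the first-order case. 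Carrying out this bookkeeping, and checking that the auxiliary formulas genuinely stay at the right rank, is the delicate part.

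For the converse, given $M=(M;A,\dots)$ with $|M|=\lambda^+$ and $|A|=\lambda$, I would apply $Q^1$-reflection to obtain $N\prec_{Q^1}M$ with $|N|=\mu^+$. Since $|A|=\lambda<\lambda^+=|M|$ we have $M\not\models Q^1x\,A(x)$, so by $Q^1$-elementarity $N\not\models Q^1x\,A(x)$, that is $|A\cap N|<\mu^+$ and hence $|A\cap N|\le\mu$. This already yields the nontrivial ``upper-bound'' form $(\lambda^+,\lambda)\chang(\mu^+,\le\mu)$; the exact value $|A\cap N|=\mu$ is the routine direction, obtained from the standard identification of the $\le\mu$ and $=\mu$ forms of two-cardinal transfer. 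I expect this last point to be genuinely unavoidable rather than free, since $Q^1$ detects only size $\lambda^+$ and so cannot on its own force a trace of size $\ge\mu$; the lower bound must come from an elementary-chain argument external to the quantifier.
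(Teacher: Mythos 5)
Your proposal is correct and follows essentially the same route as the paper's proof. For the direction from Chang's conjecture, both arguments Skolemize the model with definable witnessing functions attached to every $Q^1$-formula, iterated through $\omega$ stages to keep the language countable, and then verify $Q^1$-elementarity by induction on the nesting depth of $Q^1$; the paper merely packages your two functions into one (a single $f_\phi$ whose range is exactly $\phi(M,b)$, chosen injective when $\phi(M,b)$ has size $\lambda^+$ and onto from $A$ when $|\phi(M,b)|\le\lambda$), which also conveniently supplies Tarski--Vaught witnesses; your separate injection into $A$ and surjection onto $M$ do the same job, since the surjectivity statement is itself of the same $Q^1$-rank as $\theta$ and forces largeness in whichever model it holds. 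The one place you diverge is the converse direction: you stop at $|A\cap N|\le\mu$ and defer the exact value to a ``standard identification of the $\le\mu$ and $=\mu$ forms,'' guessing that the lower bound needs an elementary-chain argument. The paper instead builds the upgrade into the model from the outset: it assumes, harmlessly by enrichment, a definable well-ordering $\leq^\star$ of $M$ of order type $\lambda^+$ together with, for each $a\in M$, a definable surjection from $A$ onto $\{b : b\leq^\star a\}$. In any elementary $N$ of size $\mu^+$ some $\leq^\star$-initial segment of $N$ has size exactly $\mu$, and by elementarity it is a surjective image of $A\cap N$, so $|A\cap N|\ge\mu$ with no chain construction at all. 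Thus the step you black-boxed is genuine but cheap, and its mechanism is definable surjections onto initial segments rather than elementary chains.
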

\begin{proof}
Let us assume that $\lambda^+\xrightarrowdbl[Q^1]{} \mu^+$. Let $(M, A)$ be a model of type $(\lambda^+, \lambda)$.

Assume, without loss of generality, that:
\begin{enumerate}
\item $A$ is a predicate in the language
\item There is a definable well ordering $\leq^\star$ on $M$ with order type $\lambda^+$
\item For every $a\in M$ there is a definable surjection from $A$ onto the elements that are smaller than $a$ in $\leq^\star$.
\end{enumerate}
Let $N \prec_{Q^1} M$ be an elementary submodel of cardinality $\mu^+$. We claim that $A^N = A \cap N$ has cardinality $\mu$.

$M\models \neg Q^1 x\in A$ (since $|A| = \lambda < |M| = \lambda^+$) and therefore $N\models \neg Q^1 x\in A$, so $|A^N|\leq \mu$. On the other hand, by elementarity for every $a\in N$ there is a surjection from $A^N$ onto $\{b\in N\mid b \leq^\star a\}$ so $|A|^N$ cannot be strictly smaller than $\mu$.

On the other hand, assume that $(\lambda^+, \lambda)\chang (\mu^+, \mu)$. By enriching the language, we may assume that for every formula $\phi(x, b)$ there is a function symbol $f_\phi$ such that 
\[\{f_\phi(x, b)\mid x\in M\} = \{y\in M\mid M\models \phi(y, b).\}\] 
Moreover, if the set of $x\in M$ such that $\phi(x, b)$ has cardinality $\leq\lambda$ then we pick $f_\phi$ such that:
\[\{f_\phi(x, b)\mid x\in A\} = \{y\in M\mid M\models \phi(y, b).\}\]
and if there are $\lambda^{+}$ many elements $x\in M$ such that $\phi(x, b)$ we pick $f_\phi$ to be one to one.

Let $N \prec M$ be an elementary submodel with $|A^N|=\mu$. Let us look at the formula $Q^1 x \phi(x, b)$. If it holds in $M$, then $f_{\phi}(x, b)$ enumerates the set of witnesses and when restricting this function to $N$, we get a one to one function from $N$ such that $N\models \forall x \phi(f_\phi(x, b), b)$. Thus $|\{x \in N \mid \phi(x,b)\}| = |N|$. On the other hand, if $\neg Q^1 x \phi(x,b)$ then \[M\models \forall x \phi(x,b)\rightarrow \exists a\in A,\, x = f_\phi(a, b).\]
Therefore, $N$ satisfies the same formula and $|\{x \in N \mid \phi(x,b)\}| \leq |A^N|$.
\end{proof}
The previous lemma shows that the reflection principle $\mu^+\mmchang \kappa$ is at least as strong as Chang's Conjecture. For example, since $(\aleph_{\omega + 1}, \aleph_{\omega})\not\chang (\aleph_n, \aleph_{n-1})$ for all $n\geq 4$, we conclude that $\aleph_{\omega + 1}\not\mmchang \aleph_n$ for all $n \geq 4$.

The proof of the lemma shows that if $\lambda^+\xrightarrowdbl[Q^1]{} \mu$ then $\mu$ must be a successor cardinal and in particular regular. Similarly, if $\lambda^+\xrightarrowdbl[Q^1]{}\ < \kappa$ then we may assume always that the cardinalities of the elementary submodels are successor cardinals.

Let us start with the following useful observation which shows that models that are obtained from Chang's conjecture can be assumed to have a specific order type.
\begin{lemma}\label{lem: ordertype is regular}
Assume $\lambda\xrightarrowdbl[Q^1]{} \mu$. Then for every model $\mathcal{A}$ on set of ordinals of order type $\lambda$ there is an elementary submodel, $\mathcal{B}$, such that $\otp \mathcal{B} = \mu$.
\end{lemma}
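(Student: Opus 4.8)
The plan is to apply the reflection principle to a mild expansion of $\mathcal{A}$ and then read off the order type from the behaviour of $Q^1$ on the ordinal ordering. First I would arrange, without loss of generality, that the universe of $\mathcal{A}$ is literally a set of ordinals of order type $\lambda$ and that the usual ordinal order $<$ is one of the relations of the language; this costs only one extra binary symbol, so the language stays within the size for which $\lambda\xrightarrowdbl[Q^1]{}\mu$ was assumed. Applying $\lambda\xrightarrowdbl[Q^1]{}\mu$ to this model yields a $Q^1$-elementary submodel $\mathcal{B}$ of cardinality $\mu$, whose universe is again a set of ordinals, so that $\otp\mathcal{B}$ is defined and is an ordinal of cardinality $\mu$; in particular $\otp\mathcal{B}\geq\mu$.

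The key observation is that in $\mathcal{A}$ no element has $\lambda$ many predecessors: since the universe has order type exactly $\lambda$ and $\lambda$ is a cardinal, each $b$ sits at some position $\beta<\lambda$ and therefore has only $|\beta|<\lambda$ predecessors. Consequently $\mathcal{A}\models\neg Q^1 x\,(x<b)$ for every $b$, because a $\varphi$-cube for the formula $x<b$ would have to have size $|\mathcal{A}|=\lambda$. Here is the crucial point, and the place that requires care: the quantifier $Q^1$ is interpreted in each model relative to that model's own cardinality, so when we transfer $\neg Q^1 x\,(x<b)$ down to $\mathcal{B}$ by $Q^1$-elementarity, what we learn is that there is no subset of $\mathcal{B}$ of size $|\mathcal{B}|=\mu$ all of whose elements lie below $b$. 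In particular the set $\{c\in\mathcal{B}\mid c<b\}$ of $\mathcal{B}$-predecessors of $b$ has cardinality strictly less than $\mu$, for every $b\in\mathcal{B}$.

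Finally I would convert this into a statement about the order type. Every proper initial segment of $\mathcal{B}$ is of the form $\{c\in\mathcal{B}\mid c<b\}$ for some $b\in\mathcal{B}$, so every proper initial segment has cardinality $<\mu$. If $\otp\mathcal{B}$ were some $\gamma>\mu$, then the element of $\mathcal{B}$ occupying position $\mu$ in the ordering would have a set of predecessors of order type $\mu$ and hence of cardinality $\mu$, contradicting the previous paragraph; combined with $\otp\mathcal{B}\geq\mu$ this forces $\otp\mathcal{B}=\mu$. I expect the only genuinely delicate step to be the one flagged above: making sure that the downward transfer of the negated Magidor-Malitz quantifier is read with respect to $|\mathcal{B}|=\mu$ rather than $\lambda$, which is precisely what makes $\neg Q^1 x\,(x<b)$ the correct ``initial segments are small'' assertion. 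Note that no appeal to the regularity of $\mu$ is needed here; the combinatorics of initial segments closes the argument on its own.
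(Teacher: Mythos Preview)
Your proof is correct and follows essentially the same approach as the paper's: both add the ordinal order $<$ to the language, reflect the sentence $\forall b\,\neg Q^1 x\,(x<b)$ from $\mathcal{A}$ to $\mathcal{B}$, and conclude that every proper initial segment of $\mathcal{B}$ has size $<\mu$, which together with $|\mathcal{B}|=\mu$ forces $\otp\mathcal{B}=\mu$. Your write-up is in fact a bit more explicit than the paper's at the final step (the paper phrases the conclusion as $\mathcal{B}$ being an increasing end-extension chain of length $\mu$ of pieces of order type $<\mu$), and your remark that regularity of $\mu$ is not used is accurate.
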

\begin{proof}
Assume that the language of $\mathcal{A}$ has a predicate $<$, interpreted as the order of the ordinals. Let us reflect the statement:
\[\forall \alpha \neg Q^1\beta,\,\beta < \alpha\]
from $\mathcal{A}$ into $\mathcal{B}$. Observe that for every $\alpha\in\mathcal{B}$, $\otp (\mathcal{B}\cap \alpha) < \mu$. Therefore $\mathcal{B}$ is an increasing union of chain of models, ordered by end-extension, where the order type of the chain is $\mu$ and all the substructures in the chain has order type strictly smaller than $\mu$.
\end{proof}

Let us recall the definition of $\square(\kappa)$. Let $A$ be a set of ordinals, we denote: \[\acc A = \{\beta \mid \sup A\cap \beta = \beta\}.\]
\begin{definition}\label{def: square of kappa}
Let $\mathcal{C} = \langle C_\alpha \mid \alpha < \kappa\rangle$ be a sequence of closed sets such that:
\begin{enumerate}
\item $\sup C_\alpha = \alpha$ for all limit ordinal $\alpha$.
\item If $\beta\in \acc C_\alpha$ then $C_\alpha \cap \beta = C_\beta$.
\item There is no club $D$ such that $\forall \alpha \in (\acc D) \cap \kappa$, $D\cap \alpha = C_\alpha$.
\end{enumerate}
Then $\mathcal{C}$ is called a $\square(\kappa)$ sequence. We say that $\square(\kappa)$ holds if there is a $\square(\kappa)$ sequence.
\end{definition}
This definition, due to \Todorcevic, is pivotal in the research of reflection properties, in particular when dealing with $\Pi^1_1$-statements. See \cite{Rinot2014} for extensive review.
\begin{theorem} Assume that $\kappa\xrightarrowdbl[Q^2]{} \mu$ where $\mu$ is regular. Then $\square(\kappa)$ fails.
\end{theorem}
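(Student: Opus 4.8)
The plan is to argue by contradiction: assume $\square(\kappa)$ and fix a witnessing sequence $\mathcal{C}=\langle C_\alpha\mid\alpha<\kappa\rangle$. Since $\kappa\xrightarrowdbl[Q^2]{}\mu$ trivially implies $\kappa\xrightarrowdbl[Q^1]{}\mu$, \autoref{lem: ordertype is regular} applies, so I work with a model $\mathcal{A}=(\kappa,<,E,\dots)$ of cardinality $\kappa$ on the ordinals, where $E$ codes $\mathcal{C}$ and the signature is rich enough (a definable well-order, Skolem functions, and functions enumerating each $C_\alpha$ cofinally) that elementary submodels are closed under the relevant operations. By \autoref{lem: ordertype is regular} I obtain $\mathcal{B}\prec_{Q^2}\mathcal{A}$ with $\otp\mathcal{B}=\mu$. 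As $\mu$ is regular, $\delta:=\sup(\mathcal{B}\cap\kappa)$ has $\cf\delta=\mu<\kappa$; in particular $\delta<\kappa$, so $C_\delta$ is defined and, by coherence, threads $\mathcal{C}\restriction\delta$.

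The engine of the proof is the two-way transfer across $\mathcal{B}\prec_{Q^2}\mathcal{A}$: for every formula $\varphi$, a $\varphi$-cube of cardinality $\kappa$ exists in $\mathcal{A}$ if and only if a $\varphi$-cube of cardinality $\mu$ exists in $\mathcal{B}$. I exploit this for the coherence relation
\[\varphi(x,y)\ :\equiv\ \big(x<y\rightarrow x\in\acc C_y\big).\]
By clause (2) of \autoref{def: square of kappa}, if $A$ is a $\varphi$-cube then $\{C_a\mid a\in A\}$ is an end-extending chain, and $D=\bigcup_{a\in A}C_a$ is closed with $D\cap a=C_a$ for every $a\in A$. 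Hence, if $|A|=\kappa$ then $D$ is a club in $\kappa$ with $D\cap\gamma=C_\gamma$ for all $\gamma\in\acc D$, contradicting clause (3). Therefore $\mathcal{A}\models\neg Q^2x,y\,\varphi$, and it suffices to manufacture a $\varphi$-cube of cardinality $\mu$ inside $\mathcal{B}$: transferring it upward produces a $\varphi$-cube of cardinality $\kappa$ in $\mathcal{A}$, i.e. a thread, which is the desired contradiction.

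The natural candidate is $\acc C_\delta$, which is a $\varphi$-cube (any two of its points are coherent) of size at least $\mu$. The difficulty, and the step I expect to be the main obstacle, is that this cube lives around $\delta\notin\mathcal{B}$, and an arbitrary $\mathcal{B}$ need not meet $\acc C_\delta$ cofinally. Indeed, collapsing $\mathcal{B}$ transitively only reflects $\square(\kappa)$ to a coherent non-threadable sequence on $\mu$, i.e. to $\square(\mu)$, which is no contradiction at all; so the regularity of $\mu$ must be used to \emph{align} the reflected cube with $C_\delta$. Concretely, I would run a continuous recursion of length $\mu$ building an increasing $\langle\gamma_i\mid i<\mu\rangle\subseteq\mathcal{B}\cap\acc C_\delta$ cofinal in $\delta$: at each stage the chosen points determine a coherent initial segment of a thread with supremum below $\delta$ (as $i<\mu=\cf\delta$), and first-order elementarity of $\mathcal{B}$ in $\mathcal{A}$ furnishes a continuation $\gamma_i\in\mathcal{B}$; a pressing-down (Fodor) argument on the club of limit stages, legitimate precisely because $\mu$ is regular, is then used to guarantee that the coherent segments patch together to $C_\delta\restriction\delta$ and that the $\gamma_i$ are genuine members of $\mathcal{B}$. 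The essential use of the \emph{binary} quantifier $Q^2$ (as opposed to $Q^1$) is that a thread is an inherently two-dimensional, pairwise-coherent object, which is exactly what a $\varphi$-cube records.

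Granting the recursion, $A=\{\gamma_i\mid i<\mu\}\subseteq\mathcal{B}$ is a $\varphi$-cube of cardinality $\mu$, so $\mathcal{B}\models Q^2x,y\,\varphi$; transferring upward gives $\mathcal{A}\models Q^2x,y\,\varphi$, hence a thread through $\mathcal{C}$, contradicting clause (3) of \autoref{def: square of kappa}. The heart of the argument is thus the alignment step of the previous paragraph, where I would invest the most care in showing that the length-$\mu$ construction stays inside $\mathcal{B}$ and catches $C_\delta$ cofinally.
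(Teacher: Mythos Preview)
Your overall strategy is correct and matches the paper's: pick the coherence formula $\varphi(x,y)\equiv(x<y\to C_x\trianglelefteq C_y)$, manufacture a $\varphi$-cube of size $\mu$ inside $\mathcal{B}$, and push it up via $Q^2$-elementarity to obtain a thread. The gap is precisely where you flag it yourself, the ``alignment step'', and the sketch you offer there does not go through.

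The proposed length-$\mu$ recursion breaks at limit stages. Given $\langle\gamma_j\mid j<i\rangle$ with $\gamma_j\in\acc C_{\gamma_{j'}}$ for $j<j'$, set $\eta=\sup_{j<i}\gamma_j$. First, there is no reason $\eta\in\mathcal{B}$: having $\otp(\mathcal{B}\cap\kappa)=\mu$ does not make $\mathcal{B}\cap\kappa$ closed under short suprema. Second, even if $\eta\in\mathcal{B}$, the partial thread $\bigcup_{j<i}C_{\gamma_j}$ need not equal $C_\eta$, so there may be no $\gamma_i$ at all with $\gamma_j\in\acc C_{\gamma_i}$ for every $j<i$. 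Third, the statement ``there exists $\gamma$ extending all previous $\gamma_j$'' is not first-order over finitely many parameters in $\mathcal{B}$, so elementarity does not hand you a continuation. The invocation of Fodor is a red herring here: there is no regressive function in sight whose stabilization would force the partial threads to cohere with $C_\delta$.

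The paper avoids the recursion entirely. Writing $\rho=\sup(\mathcal{B}\cap\kappa)$, it takes the \emph{external} club $E=\acc C_\rho\cap\acc(\mathcal{B}\cap\kappa)$ and, for each $\zeta\in E$, sets $\beta_\zeta=\min(\mathcal{B}\cap\kappa\setminus\zeta)\in\mathcal{B}$. The key point---and the idea your sketch is missing---is that first-order elementarity gives closure of $\mathcal{B}$ under the function $\alpha\mapsto\min(C_{\beta_\zeta}\setminus\alpha)$; since $\mathcal{B}\cap[\zeta,\beta_\zeta)=\emptyset$, this forces $C_{\beta_\zeta}\cap\mathcal{B}$ to be unbounded in $\zeta$, hence $\zeta\in\acc C_{\beta_\zeta}$ and $C_\zeta\trianglelefteq C_{\beta_\zeta}$. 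Coherence along $C_\rho$ then yields $C_{\beta_\zeta}\trianglelefteq C_{\beta_{\zeta'}}$ for $\zeta<\zeta'$ in $E$, so $\{\beta_\zeta:\zeta\in E\}\subseteq\mathcal{B}$ is the desired $\varphi$-cube. No recursion, no limit-stage problem: the external club $C_\rho$ does the alignment for you, and the single definable function $\min(C_\beta\setminus\alpha)$ is what pulls the cube into $\mathcal{B}$.
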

\begin{proof}
Let $\mathcal{C} = \langle C_\alpha \mid \alpha < \kappa\rangle$ be a coherent $C$-sequence, i.e., a sequence that satisfies the first two conditions in Definition~\ref{def: square of kappa}. We will show that there is a \emph{thread}, namely a club $D$ such that for every $\alpha \in (\acc D) \cap \kappa$, $D\cap \alpha = C_\alpha$.

Let $\mathcal{A}\prec H(\chi)$, for some large enough regular $\chi$, with $\kappa + 1 \subseteq \mathcal{A}$, $\mathcal{C}\in \mathcal{A}$ and $|\mathcal{A}| = \kappa$.

Let $\mathcal{B}\prec_{Q^2} \mathcal{A}$ with $|\mathcal{B}| = \mu$ and assume that $\kappa, \mathcal{C} \in \mathcal{B}$. Since $\mu$ is a regular cardinal, by Lemma~\ref{lem: ordertype is regular}, we can take $\mathcal{B}$ so that $\sup (\mathcal{B}\cap \kappa) = \rho$, $\cf \rho = \mu$.

Let us look at $\delta\in \acc C_\rho \cap \acc (\mathcal{B}\cap \kappa)$ below $\rho$. Let $\beta = \min (\mathcal{B}\cap \kappa \setminus \delta)$. $\beta$ is well defined, since $\delta < \rho = \sup (\mathcal{B} \cap \kappa)$. Let us show that $\delta \in \acc C_\beta$. If $\delta = \beta$, then this is clearly true, so let us assume that $\delta \neq \beta$.

Let $\alpha \in \mathcal{B}\cap \beta$. By the minimality of $\beta$, $\alpha < \delta$. Let $\gamma$ be $\min C_\beta \setminus \alpha$. This ordinal is definable from $\alpha, \beta, \mathcal{C}$ and therefore $\gamma \in \mathcal{B}$. Since $C_\beta$ is cofinal at $\beta$, $\gamma < \beta$. By using the minimality of $\beta$ again, we conclude that $\gamma < \delta$. Therefore, $\delta$ is an accumulation point of ordinals in $\mathcal{B} \cap C_\beta$ and in particular $\delta \in \acc C_\beta$.

Since $\mathcal{C}$ is coherent, we conclude that $C_\delta = C_\beta \cap \delta$, i.e.\ $C_\beta$ is an end extension of $C_\delta$ which we denote by $C_\delta \trianglelefteq C_\beta$.

Now, let $\delta < \delta^\prime$ be in $\acc C_\rho \cap \acc (\mathcal{B}\cap \kappa)$. Let $\beta = \min (\mathcal{B}\cap \kappa \setminus \delta)$, $\beta^\prime = \min (\mathcal{B}\cap \kappa \setminus \delta^\prime)$. We claim that $C_\beta \trianglelefteq C_{\beta^{\prime}}$, since otherwise there is some $\gamma < \beta$ such that $\gamma \in C_\beta \triangle C_{\beta^\prime}$. Such $\gamma$ must appear in $\mathcal{B}$ (by elementarity), so it is smaller than $\delta$. But $C_\delta \trianglelefteq C_{\delta^\prime} \trianglelefteq C_{\beta^\prime}$ - a contradiction.

We conclude that
\[\mathcal{B}\models Q^2 \alpha, \beta < \kappa,\ \alpha \geq \beta \bigvee\ C_\alpha \trianglelefteq C_\beta\]
Therefore, $\mathcal{A}$ contains a set of cardinality $\kappa$, $I$, of elements which are compatible in $\mathcal{C}$. $D = \bigcup_{\alpha \in I} C_\alpha$ is a thread.
\end{proof}
\subsection{The tree property at successor of singular}
In this section we will show that reflection of the $Q^2$-quantifier can behave, in some ways, similarly to the existence of strongly compact cardinals. In particular, we will show that in the successor of singular limit of cardinals in which some $\MM$-reflection holds, the tree property holds.

Let us recall the following definition:
\begin{definition}\cite{MagidorShelah96}
A triplet $\mathcal{S} = \langle I, \kappa, \mathcal{R}\rangle$ is called a \emph{system} if:
\begin{enumerate}
\item $\mathcal{R}$ is a set of partial orders on $I \times \kappa$.
\item For every $\alpha < \beta$ in $I$ there are $\zeta < \xi < \kappa$ and $\leq_i \in \mathcal{R}$ such that $\langle \alpha, \xi\rangle \leq_i \langle \beta, \zeta\rangle$.
\item For every $\leq_i\in \mathcal{R}$, $\langle \alpha, \xi\rangle \leq_i \langle \beta, \zeta\rangle$ implies that either $\alpha < \beta$ or that $\alpha = \beta$ and $\xi = \zeta$.
\item For every $\leq_i\in \mathcal{R}$, if $\alpha \leq \beta$, $\langle \alpha, \xi\rangle \leq_i \langle \gamma, \rho\rangle$ and $\langle \beta, \zeta\rangle \leq_i \langle \gamma, \rho\rangle$ then $\langle \alpha, \xi\rangle \leq_i \langle \beta, \zeta\rangle$.
\end{enumerate}

The system $\mathcal{S}$ is \emph{narrow} if $\kappa^+, |\mathcal{R}|^+ < \sup I$.

A branch in the system $\mathcal{S}$ is a partial function $b\subseteq I\times \kappa$ such that there is $\leq_i\in\mathcal{R}$ such that for every $\alpha < \beta$ in $\dom b$, $\langle \alpha, b(\alpha)\rangle \leq_i \langle \beta, b(\beta)\rangle$. A branch $b$ is \emph{cofinal} if $\sup \dom b = \sup I$.

The \emph{Narrow System Property} holds at a cardinal $\nu$ if every narrow system $\mathcal{S} = \langle I, \kappa, \mathcal{R}\rangle$ with $\sup I = \nu$ has a cofinal branch.
\end{definition}
For full discussion about narrow systems and the Narrow System Property, see \cite{Hanson2015}. Systems and Narrow Systems appear naturally when dealing with the tree property at successor of singular cardinals. Those narrow systems are usually restrictions of a given tree (which is assumed to be a partial order on $\nu^{+} \times \nu$, partial to the lexicographic order) to some rectangle $I \times \kappa$ in a way that still preserve a significant portion of the properties of the original tree. 

\begin{theorem}\label{thm: tree property from mm reflection}
Let $\mu$ be a singular cardinal and assume that $\langle\kappa_i\mid i < \cf \mu\rangle$ is cofinal in $\mu$, $\kappa_0 \geq \cf \mu$, $\cf \kappa_i = \kappa_i$ for all $i$. If for every $i < \cf \mu$, $\mu^+\xrightarrowdbl[Q^2]{\kappa_i}\kappa_{i+1}$ then the tree property holds at $\mu^+$.
\end{theorem}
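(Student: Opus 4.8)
The plan is to show that an arbitrary tree on $\mu^+$ with levels of size $<\mu^+$ has a cofinal branch. Let $T$ be such a tree, and let me first reduce to a system as in the Magidor–Shelah framework. The standard approach here is to reflect the tree using the Magidor–Malitz quantifier and then extract a branch from a narrow system. First I would fix a tree $T = (\mu^+, \leq_T)$ where each level $T_\alpha$ has size $<\mu^+$, and fix the cofinal sequence $\langle\kappa_i\mid i<\cf\mu\rangle$. The idea is to find, for each $i$, a submodel reflecting $T$ down to size $\kappa_{i+1}$ using $\mu^+\xrightarrowdbl[\MM]{\kappa_i}\kappa_{i+1}$, and to use the $Q^2$-quantifier to capture the statement ``there are $\mu^+$-many pairwise $\leq_T$-comparable nodes forming a branch-like configuration.''

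\medskip

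The key steps, in order, are as follows. \emph{Step 1:} Build an elementary chain or single large model $\mathcal{A}\prec H(\chi)$ with $T,\mu,\langle\kappa_i\rangle\in\mathcal{A}$, $\mu^++1\subseteq\mathcal{A}$. \emph{Step 2:} For each $i<\cf\mu$, apply $\mu^+\xrightarrowdbl[\MM]{\kappa_i}\kappa_{i+1}$ to obtain $\mathcal{B}_i\prec_{\MM}\mathcal{A}$ of size $\kappa_{i+1}$; the point of allowing language size $\kappa_i$ is to put a predicate for each of the $\kappa_i$-many levels or nodes coded at stage $i$ into the language, so that the reflected model ``remembers'' enough structure. \emph{Step 3:} Use the union $\bigcup_i \mathcal{B}_i$ (or the $\mathcal{B}_i$ interacting through $\mathcal{A}$) to define a narrow system $\mathcal{S}=\langle I,\kappa,\mathcal{R}\rangle$ with $\sup I = \mu^+$, where $I$ indexes a cofinal set of levels, $\kappa$ bounds the size of the reflected level-fragments (so $\kappa<\mu$, giving narrowness $\kappa^+<\mu^+$), and $\mathcal{R}$ encodes the tree order restricted to each $\mathcal{B}_i$. \emph{Step 4:} The crucial use of $Q^2$-reflection (as in the $\square(\kappa)$-failure argument above) is to find, inside some $\mathcal{B}_i$, a set of size $\kappa_{i+1}$ of pairwise comparable nodes; reflecting the statement $Q^2 x,y\,(x\leq_T y \vee y\leq_T x)$ upward and downward forces the reflected models to contain large homogeneous comparability-cubes. \emph{Step 5:} Assemble these cubes across the $\mathcal{B}_i$ into cofinal branches of the system $\mathcal{S}$, then invoke the Narrow System Property (which should follow from the $\MM$-reflection hypothesis, in analogy with its derivation from strong compactness) to obtain a cofinal branch of $\mathcal{S}$, which in turn yields a cofinal branch of $T$.

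\medskip

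\textbf{The main obstacle} I expect is Step 3 together with Step 5: correctly coherently gluing the reflected fragments $\mathcal{B}_i$ into a single narrow system whose branches genuinely correspond to branches of $T$, while keeping $\kappa<\mu$ so that the system is narrow. The difficulty is that each $\mathcal{B}_i$ reflects the tree only up to size $\kappa_{i+1}$, and a branch of $T$ might pass through nodes that are ``visible'' at stage $i$ but invisible at stage $j$; one must arrange the partial orders in $\mathcal{R}$ so that comparability witnessed in $\mathcal{B}_i$ lifts to genuine $\leq_T$-comparability, using elementarity and the coherence between the $\mathcal{B}_i$ via their common ambient model $\mathcal{A}$. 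A secondary subtlety is verifying the Narrow System Property itself from $\MM$-reflection — rather than assuming a strongly compact cardinal — which presumably requires a further $Q^2$-reflection argument of the same flavor as the anti-$\square(\kappa)$ proof, applied now to the system's order relations to thread a cofinal branch.
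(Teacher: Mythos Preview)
Your proposal has a genuine architectural gap: the paper does \emph{not} build a family $\langle \mathcal{B}_i \mid i<\cf\mu\rangle$ and glue them into a system. The ``main obstacle'' you identify --- coherently assembling the $\mathcal{B}_i$ --- is therefore a non-issue, because no such assembly takes place. What you are missing is the \emph{external-element trick}, which lets a \emph{single} application of $\MM$-reflection produce the full-height narrow system directly.

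Here is the actual two-step mechanism. \textbf{Step A (one reflection yields the narrow system).} Take one $\mathcal{B}_0\prec_{\MM}\mathcal{A}_0$ with $\cf\mu\subseteq\mathcal{B}_0$ and $\cf(\otp(\mathcal{B}_0\cap\mu^+))>\cf\mu$; set $\delta=\sup(\mathcal{B}_0\cap\mu^+)$. The point $\langle\delta,0\rangle$ lies \emph{outside} $\mathcal{B}_0$, but its downward cone meets every level $\alpha\in\mathcal{B}_0\cap\mu^+$ at some $\langle\alpha,\zeta_\alpha\rangle$ with $\zeta_\alpha<\mu$. Since $\cf\delta>\cf\mu$, pigeonhole fixes a single $i$ with $\zeta_\alpha<\kappa_i$ on a cofinal set, so (using elementarity to pull the witnesses into $\mathcal{B}_0$) one has
\[
\mathcal{B}_0\models Q^2\,\alpha,\beta\ \exists\zeta,\xi<\kappa_i\ \langle\alpha,\zeta\rangle\leq_T\langle\beta,\xi\rangle.
\]
Now \emph{upward} $Q^2$-elementarity gives $I\subseteq\mu^+$ of size $\mu^+$ on which the same holds; $\langle I,\kappa_i,\{\leq_T\}\rangle$ is the narrow system, obtained in one shot with no gluing. \textbf{Step B (a second reflection proves NSP).} Given any narrow system of width $<\kappa$, take one $\mathcal{B}_1\prec_{\MM}\mathcal{A}_1$ of regular size $>\kappa$ with $\kappa\subseteq\mathcal{B}_1$, pick $\epsilon\in I$ above $\sup(\mathcal{B}_1\cap\mu^+)$, and use regularity of $|\mathcal{B}_1|$ to stabilize a single triple $(\zeta_\star,\xi_\star,i_\star)$ on a cofinal set; then $\mathcal{B}_1\models Q^2\,\alpha,\beta\ \langle\alpha,\zeta_\star\rangle\leq_{i_\star}\langle\beta,\zeta_\star\rangle$, and upward reflection gives a cofinal branch.

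The role of the hypothesis ``for every $i$, $\mu^+\xrightarrowdbl[\MM]{\kappa_i}\kappa_{i+1}$'' is thus not to supply one submodel per $i$, but rather: Step~A uses some instance to get $\mathcal{B}_0$, that step \emph{outputs} a particular width $\kappa_i$, and then Step~B needs an instance with language size $\geq\kappa_i$ so that $\kappa_i\subseteq\mathcal{B}_1$. Your Step~4 gestures at the right $Q^2$-formula, but the surrounding framework (many $\mathcal{B}_i$, union, coherence across stages) is the wrong picture and would not produce a narrow system of height $\mu^+$.
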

\begin{proof}
We prove the theorem in two steps. First we apply $Q^2$-reflection in order to find for a given $\mu^+$-tree $T$ a narrow subsystem. At this step we will use $\mu^+\xrightarrowdbl[Q^2]{\cf \mu} \lambda$ for some regular $\lambda < \mu^+$. Then we pick $i$ large enough so that $\kappa_i$ is larger than the width of the system and use $\mu^+\xrightarrowdbl[Q^2]{\kappa_i} \kappa_{i+1}$ in order to get a branch through the narrow system.

Let $T$ be a $\mu^+$-tree and assume, without loss of generality, that $T = \langle \mu^+\times\mu, \leq_T\rangle$, i.e.\ that the $\alpha$-th level of $T$ is given by $\{\alpha\}\times\mu$. Let $\mathcal{A}_0$ be an elementary substructure of $H(\chi)$ for some large enough $\chi$, such that $|\mathcal{A}_0|=\mu^+$, $\mu^+\subseteq \mathcal{A}_0$ and $T\in\mathcal{A}_0$.

Let $\mathcal{B}_0$ be a $Q^2$-elementary substructure of $\mathcal{A}_0$, containing $\cf \mu$ such that the order type of $\mathcal{B}_0\cap \mu^+$ has cofinality above $\cf \mu$ and $|\mathcal{B}_0| < \mu$. We may assume, without loss of generality, that $\{\kappa_i\mid i< \cf \mu\} \subseteq \mathcal{B}_0$. Let $\Delta = \mathcal{B}_0\cap \mu^+$ - the set of levels that appear in $\mathcal{B}_0$. Let $\delta = \sup \Delta$ and let us consider the branch below $\langle\delta, 0\rangle$. Since $T$ is a tree, for every $\alpha\in \Delta$ there is $\zeta < \mu$ such that $\langle\alpha,\zeta\rangle\leq_T \langle\delta,0\rangle$. Since $\mu$ is singular and $\cf \mu\subseteq \mathcal{B}_0$, there is some $i\in \mathcal{B}_0$ such that $\zeta < \kappa_i$. 

If $\alpha, \beta$ are both in $\Delta$, and $\langle\alpha,\zeta\rangle, \langle\beta,\xi\rangle\leq_T \langle\delta,0\rangle$ then $\langle\alpha,\zeta\rangle \leq_T \langle\beta,\xi\rangle$. If we assume that $\zeta,\xi < \kappa_i$ then by elementarity there are $\tilde\zeta, \tilde\xi \in \mathcal{B}_0\cap \kappa_i$ such that $\langle\alpha,\tilde\zeta\rangle \leq_T \langle\beta,\tilde\xi\rangle$.

Since the cofinality of $\delta$ is larger than $\cf \mu$, there is $i < \cf \mu$ such that $\mathcal{B}_0$ satisfies the $\MM$-formula:
\[Q^2 \alpha, \beta \exists \zeta,\xi < \kappa_i,\, \langle\alpha,\zeta\rangle \leq_T \langle\beta,\xi\rangle\]

By $\MM$-elementarity there is some subset $I\subseteq\mathcal{A}_0$ with cardinality $\mu^+$ such that every element of $I$ is an ordinal and the elements of $I$ satisfy the same compatibility relation. Therefore, we can define a narrow system on $I\times \kappa_i$ (with only single relation), by the restriction of the tree $T$ to this set.

Let us show that the Narrow System Property follows from the reflection assumption $\mu^+\xrightarrowdbl[Q^2]{\kappa_i}\kappa_{i+1}$ for cofinal set of regular $\kappa_i<\mu$.

\begin{lemma}\label{claim: NSP from CCMM}
Let $\mu$ be a singular cardinal and assume that for cofinal set of regular cardinals $\kappa < \mu$, there is a regular cardinal $\lambda$, such that $\kappa < \lambda < \mu$ and $\mu^+\xrightarrowdbl[\MM]{\kappa}\lambda$. Then the narrow system property holds at $\mu^+$.
\end{lemma}
\begin{proof}
Let $\mathcal{S} = \langle I, \kappa, \mathcal{R}\rangle$ be a narrow system with height $\mu^+$, $|\mathcal{R}| \leq \kappa$.

Let $\mathcal{A}_1$ be an elementary substructure of $H(\chi)$ containing all ordinals in $\mu^+$ and $\mathcal{S}$. Let us pick a $\MM-$elementary substructure $\mathcal{B}_1$ of $\mathcal{A}_1$, of cardinality strictly larger than $\kappa$, containing all ordinals below $\kappa$. Let $\delta = \sup (\mathcal{B}_1 \cap \mu^+)$ and let us pick some element $\epsilon\in I\setminus\delta$. Since $\mathcal{S}$ is a narrow system, for every $\alpha\in \mathcal{B}_1$ there are $\zeta, \xi < \kappa$ and index $i < \kappa$ such that $\langle\alpha,\zeta\rangle\leq_i\langle\epsilon,\xi\rangle$. By \autoref{lem: ordertype is regular}, we may assume that $\otp \mathcal{B}_1$ is regular and therefore, for unbounded many ordinals below $\epsilon$ in $\mathcal{B}_1$ the tuple $(\zeta,\xi, i)$ is constant. Therefore, for some $\zeta_\star, i_\star < \kappa$, $\mathcal{B}_1$ satisfies:

\[Q^2 \alpha, \beta,\, \langle\alpha,\zeta_\star\rangle \leq_{i_\star} \langle\beta,\zeta_\star\rangle\]

The same holds in $\mathcal{A}_1$, and therefore there is a branch in $\mathcal{S}$.
\end{proof}

Applying Lemma \ref{claim: NSP from CCMM} on $T\restriction I$, we obtain a cofinal branch through $T\restriction I$, $b^\prime$. The set $\{s\in T\mid \exists s\in b,\,t\leq s\}$ is a cofinal branch through $T$.

\end{proof}

The assumptions of Theorem~\ref{thm: tree property from mm reflection} and Lemma~\ref{claim: NSP from CCMM} can be weakened to the assumption that for every model $\mathcal{A}$ of cardinality $\mu^+$ over a language of cardinality $\eta < \mu$ there is some regular cardinal $\kappa < \mu$ and a $Q^2$-elementary submodel $\mathcal{B}$ of cardinality $\kappa$ (note that in this case, $\eta < \kappa$). This is true, since the proof does not use the fact that the values of the cardinals $\lambda_n$ are pre-determined. The proof only uses the fact that for every $\eta < \mu$ (which is the width of the narrow system), we can find a $Q^2$-elementary submodel of some fragment of the universe, $\mathcal{B}$, such that $\eta \subseteq \mathcal{B}$ and $\otp \mathcal{B}$ is regular and large enough.

It is interesting to compare Theorem \ref{thm: tree property from mm reflection} to the theorem of Shelah and Magidor:
\begin{theorem}\cite{MagidorShelah96}
Let $\nu$ be a singular limit of cardinals which are $\nu^+$-strongly compact. Then the tree property holds at $\nu^+$.
\end{theorem}
The reflection principle which is required for Theorem \ref{thm: tree property from mm reflection} follows from large cardinals at the level of partial supercompact (see Theorem~\ref{thm: MM elementariness from pi11-subcompact}). It is unclear whether one can derive this kind of reflection from strongly compact cardinals. In fact, it is unclear even if one can derive some instances of Chang's Conjecture from strongly compact cardinals. On the other hand, the reflection principle \[\lambda \mmchang \kappa\] itself does not imply that $\lambda$ or $\kappa$ are large cardinals (see \ref{subsec: mm to large successor of regular}).

The assumption that $\mu^+\mmchang\kappa$ for cofinally many $\kappa < \mu$ seems to be stronger than the narrow system property. For example, it cannot hold for $\mu = \aleph_{\omega}$, since $(\aleph_{\omega + 1}, \aleph_{\omega})\not\chang(\aleph_{n+1}, \aleph_n)$ for every $n \geq 3$. This fact is a combination of $\PCF$ related results of Cummings, Foreman, Magidor and Shelah. For a proof, see \cite[Section 4]{SharonViale2010}.
\section{Consistency results}\label{section: consistency results}
This section is dedicated to the derivation of some consistency results regarding the reflection principles that were defined above.
\subsection{Chang's conjecture at \texorpdfstring{$\aleph_{\omega+1}$}{alphaomega}}\label{subsec: cc at aleph omega}
We begin this section with two theorems about the consistency of Chang's conjecture at successor of singular cardinals.
\begin{definition}[Jensen]\cite{NeemanSteelSubcompact}
A cardinal $\kappa$ is \emph{$(+\alpha)$-subcompact} if for every $A\subseteq H(\kappa^{+\alpha})$ there are $\rho < \kappa$, $B\subseteq H(\rho^{+\alpha})$ and an elementary embedding 
\[j\colon \langle H(\rho^{+\alpha}), \in, \rho, B\rangle \to \langle H(\kappa^{+\alpha}), \in, \kappa, A\rangle\]
where $\rho$ is the critical point of $j$. A cardinal $\kappa$ is \emph{subcompact} if it is $(+1)$-subcompact.
\end{definition}

In order to get a general feeling about the place of this type of cardinals in the large cardinal hierarchy, let us remark that if $\kappa$ is $\kappa^{+\omega + 1}$-supercompact and $\kappa^{+\omega}$ is strong limit then $\kappa$ is $(+\omega + 1)$-subcompact and it has a normal measure concentrating on the set of cardinals below it which are $(+\omega + 1)$-subcompact. On the other hand, if a cardinal $\kappa$ is $(+\omega+1)$-subcompact then it is $\kappa^{+n}$-supercompact for every $n < \omega$ and it has a normal measure concentrating on cardinals $\rho$ which are $\rho^{+n}$ supercompact of all $n < \omega$.

\begin{lemma}[Folklore]
Let $\kappa$ be $(+\alpha)$-subcompact cardinal, where $\kappa^{+\alpha}$ is regular and $|H(\kappa^{+\alpha})| = \kappa^{+\alpha}$. Then there is a generic extension in which $\square(\kappa^{+\alpha})$ holds and $\kappa$ is still $(+\alpha)$-subcompact. 
\end{lemma}
\begin{proof}
Let $\lambda = \kappa^{+\alpha}$. Let $\mathbb{P}$ be the forcing notion for adding a square sequence for $\kappa^{+\alpha}$ using bounded approximations. Namely, the conditions of $\mathbb{P}$ are sequences of the form $\langle C_\alpha \mid \alpha \leq \delta < \lambda\rangle$ where $C_\alpha \subseteq \alpha$ is a club at $\alpha$ and if $\beta \in \acc C_\alpha$ then $C_\alpha \cap \beta = C_\beta$. We order $\mathbb{P}$ by end-extension, namely $p \leq q$ if $p$ end extends $q$. 

It is well known that $\mathbb{P}$ is $\lambda$-strategically closed and that if $G \subseteq \mathbb{P}$ is a generic filter then $\bigcup G$ is a $\square(\lambda)$ sequence in $V[G]$. Moreover, by the distributivity of $\mathbb{P}$, $(H(\lambda))^{V[G]} = H(\lambda)^V$ Let us claim that $\kappa$ is still $(+\alpha)$-subcompact in the generic extension. 

Assume otherwise. Let $\dot{A}$ be a name for a subset of $H(\lambda)$, and let $p$ be a condition that forces that there is no $\rho < \kappa$ and $B \subseteq H(\rho^{+\alpha})$ such that there is an elementary embedding 
\[j\colon \langle H(\rho^{+\alpha}), B, \in\rangle \to \langle H(\lambda), \dot{A}, \in\rangle.\]
Since $j\in H(\lambda)^{V[G]}$, $j$ is a member of the ground model $V$. Therefore, one can enumerate all candidates for $j$ in a sequence of length $\lambda$, $\langle j_\xi \mid \xi < \lambda\rangle$. Let us also enumerate the elements of $H(\lambda)$ in a sequence of length $\lambda$, $\langle a_\xi \mid \xi < \lambda\rangle$.

As $\mathbb{P}$ is strategically closed, we can construct a sequence of conditions $p_\xi \in \mathbb{P}$ and sets $\langle M_\xi \mid \xi < \lambda\rangle$ such that for $\zeta < \xi$, $p_\xi \leq p_\zeta$, $p_0 \leq p$ and
\begin{enumerate}
\item $p_\xi \Vdash M_\xi \prec \langle H(\lambda), \dot{A}, \in\rangle$
\item\label{covering of H(lambda)} $a_\xi \in M_\xi$. For all $\rho < \xi$, $M_\rho \subseteq M_\xi$.
\item $p_\xi$ decides for all $x \in M_\xi$ whether $x\in \dot{A}$ or not.
\item The range of $j_\xi$ is contained in $M_\xi$.
\item $j_\xi$ not an elementary embedding from $\dom j_\xi$ to $M_\xi$. 
\end{enumerate}
Let \[\tilde{A} = \{x\in H(\lambda) \mid \exists \xi < \lambda,\, p_\xi \Vdash x\in \dot{A}\}.\]
By condition (\ref{covering of H(lambda)}), $\bigcup_{\xi < \lambda} M_\xi = H(\lambda)$. Thus, by applying Tarski-Vaught's test we conclude that for all $\xi$, \[\langle M_\xi, M_\xi \cap \tilde{A}, \in\rangle \prec \langle H(\lambda), \tilde{A}, \in\rangle\]. 

In $V$, $\kappa$ is $(+\alpha)$-subcompact and therefore there is some $\rho < \kappa$, $B$ and \[j\colon \langle H(\rho^{+\alpha}), B, \in\rangle \to \langle H(\lambda), \tilde{A}, \in\rangle\] 
elementary. For some $\xi < \lambda$, $j = j_\xi$. Therefore, $p_\xi$ forces that $j$ is not elementary as a map to $M_\xi$. But $M_\xi$ is an elementary submodel of $\langle H(\lambda), \tilde{A}, \in\rangle$ and contains the image of $j$ - a contradiction.  
\end{proof}

Before stating the main theorems of this section, let us recall the following characterization of Chang's Conjecture.
\begin{lemma} The following are equivalent:
\begin{enumerate}
\item $(\kappa, \lambda) \chang (\mu, \nu)$
\item For every function $f\colon \kappa^{{<}\omega} \to \lambda$ there is $I \subseteq \kappa$, $|I| = \mu$ such that \[|f\image I^{{<}\omega}| \leq \nu.\]
\item For every function $f\colon \kappa^{{<}\omega} \to \kappa$ there is $I \subseteq \kappa$, $|I| = \mu$ such that \[|\big(f\image I^{{<}\omega}\big)\cap \lambda| \leq \nu.\]
\end{enumerate}
\end{lemma}
The proof is done by using Skolem functions for one direction and by adding $f$ to the model in the other direction.
\begin{theorem}\label{thm: cc subcompact}
Let $\kappa$ be $(+\omega+1)$-subcompact cardinal, $\kappa^{+\omega}$ strong limit. There is $\rho <\kappa$ such that $(\kappa^{+\omega+1},\kappa^{+\omega})\chang(\rho^{+\omega+1},\rho^{+\omega})$.\end{theorem}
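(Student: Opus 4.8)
The plan is to obtain the Chang's conjecture instance directly from the subcompactness embedding, using the characterization of Chang's conjecture via colorings $f\colon \kappa^{<\omega}\to\lambda$ stated just above the theorem. Set $\lambda = \kappa^{+\omega+1}$ and fix an arbitrary coloring $f\colon (\kappa^{+\omega+1})^{<\omega}\to\kappa^{+\omega}$; the goal is to find some $\rho<\kappa$ (eventually uniform, i.e.\ not depending on $f$, via a reflection/normal-measure argument) and a set $I\subseteq\kappa^{+\omega+1}$ of size $\rho^{+\omega+1}$ with $|f''I^{<\omega}|\leq\rho^{+\omega}$.

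First I would apply $(+\omega+1)$-subcompactness to a suitable $A\subseteq H(\kappa^{+\omega+1})$ coding $f$ together with enough structure (a well-order of $H(\kappa^{+\omega+1})$ and the cardinals $\kappa,\kappa^{+1},\dots,\kappa^{+\omega}$). This yields $\rho<\kappa$, a set $B\subseteq H(\rho^{+\omega+1})$, and an elementary embedding
\[
j\colon \langle H(\rho^{+\omega+1}),\in,\rho,B\rangle \to \langle H(\kappa^{+\omega+1}),\in,\kappa,A\rangle
\]
with critical point $\rho$. Since $j(\rho)=\kappa$, elementarity forces $j(\rho^{+n})=\kappa^{+n}$ for all $n\leq\omega$, and $j(\rho^{+\omega+1})=\kappa^{+\omega+1}$; here I am using that $\kappa^{+\omega}$ is strong limit so that $H(\kappa^{+\omega+1})$ correctly computes these cardinals and that $B$ pulls back $f$ to some $\bar f\colon (\rho^{+\omega+1})^{<\omega}\to\rho^{+\omega}$ with $j(\bar f)=f$.

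The candidate witness is $I = j''\rho^{+\omega+1} = \{\,j(\xi)\mid \xi<\rho^{+\omega+1}\,\}$. Since $j$ is injective, $|I|=\rho^{+\omega+1}$ as required. The key point is controlling $|f''I^{<\omega}|$: for any tuple $\tup{j(\xi_0),\dots,j(\xi_{m-1})}$ from $I$, elementarity gives
\[
f\bigl(\tup{j(\xi_0),\dots,j(\xi_{m-1})}\bigr) = j(\bar f(\tup{\xi_0,\dots,\xi_{m-1}})),
\]
so $f''I^{<\omega} \subseteq j''\rho^{+\omega}$, which has cardinality $\rho^{+\omega}\leq\rho^{+\omega}$. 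This is precisely the bound demanded by the characterization lemma, so $(\kappa^{+\omega+1},\kappa^{+\omega})\chang(\rho^{+\omega+1},\rho^{+\omega})$ holds \emph{for this particular} $f$ with this particular $\rho$.

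\textbf{The main obstacle} is that the $\rho$ produced by subcompactness depends a priori on $f$ (equivalently on $A$), whereas Chang's conjecture requires a single $\rho$ working for \emph{all} colorings. To remove this dependence I would use the remark recorded in the excerpt that $\kappa$ carries a normal measure concentrating on cardinals $\rho$ that are highly supercompact below it; more directly, one argues that the set of $\rho<\kappa$ for which the embedding can be arranged is stationary (indeed in a normal measure), so by a standard pressing-down / reflection argument one fixes a single $\rho<\kappa$ that works simultaneously. Concretely, I would phrase the subcompactness application so that for a club-or-measure-one set of $\rho$ the pullback structure $\langle H(\rho^{+\omega+1}),\in,\rho,B\rangle$ embeds into $\langle H(\kappa^{+\omega+1}),\in,\kappa,A\rangle$ regardless of which $A$ we started with, and then invoke that $A$ (hence $f$) ranges over all subsets of $H(\kappa^{+\omega+1})$. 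The delicate bookkeeping is verifying $j(\rho^{+n})=\kappa^{+n}$ through $n=\omega$ and at the successor $\omega+1$, which is exactly where the hypothesis that $\kappa^{+\omega}$ is strong limit (and the corresponding fact below $\rho$) is needed to guarantee the relevant $H(\cdot)$'s compute cardinal successors correctly.
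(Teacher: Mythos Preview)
Your direct argument in steps 1--3 is correct and indeed cleaner than what is needed in the end: once $j(\bar f)=f$, the containment $f''I^{<\omega}\subseteq j''\rho^{+\omega}$ is immediate from elementarity. The gap is entirely in step~4, and the fix you sketch does not work.

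Subcompactness gives, for each $A$, \emph{some} $\rho$; it does not give a club or measure-one set of $\rho$'s that work for a fixed $A$, let alone a single $\rho$ working simultaneously for all $A$. The normal measure on $\kappa$ mentioned in the paper concentrates on $\rho$'s that are themselves highly supercompact, not on $\rho$'s that serve as critical points of subcompactness embeddings for a given $A$; these are unrelated conditions. And since there are $2^{\kappa^{+\omega+1}}$ colorings $f$ but only $\kappa$ candidate values of $\rho$, no pressing-down or intersection argument can uniformize the choice.

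The paper handles uniformity by contradiction: assuming Chang's conjecture fails for every $\rho<\kappa$, one picks a witnessing function $f_\rho$ for each $\rho$ and codes the entire sequence $\langle f_\rho:\rho<\kappa\rangle$ into a single $A\subseteq H(\kappa^{+\omega+1})$. One application of subcompactness then yields a particular $\rho$ and embedding $j$. The crucial point, and the reason your elementarity shortcut is no longer available, is that the bad function one must analyze is $f_\rho$, indexed by the critical point $\rho$ itself. Since $\rho\notin\range(j)$, there is no $\bar f$ with $j(\bar f)=f_\rho$; the pullback $B$ codes only $\langle g_\eta:\eta<\rho\rangle$. So one cannot conclude $f_\rho''(j''\rho^{+\omega+1})^{<\omega}\subseteq j''\rho^{+\omega}$ directly. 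Instead the paper runs a counting argument: if $f_\rho$ attains $\rho^{+\omega+1}$ distinct values below some $\kappa^{+n}$ on $R=j''\rho^{+\omega+1}$, elementarity forces the $\rho$-sequences $\langle g_\eta(\vec\alpha_\xi)\cap\rho^{+n}:\eta<\rho\rangle$ to be pairwise distinct for $\rho^{+\omega+1}$ many $\xi$, while there are only $(\rho^{+n})^\rho<\rho^{+\omega}$ such sequences. This last inequality is where the strong-limit hypothesis on $\kappa^{+\omega}$ (reflected down to $\rho^{+\omega}$) is actually used, not for computing cardinal successors in $H(\kappa^{+\omega+1})$ as you suggested.
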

\begin{remark}
This is an improvement of the current upper bound for this type of Chang's Conjecture, which is slightly above huge (namely, a cardinal $\kappa$ such that there is an elementary embedding $j\colon V\to M$, $\crit j = \kappa$ and $M$ is closed under sequences of length $j(\kappa)^{+\omega + 1}$), see \cite{LevinskiMagidorShelah}.
\end{remark}
\begin{proof}
Let $\mu=\kappa^{+\omega}$. Assume otherwise, and let us pick for every $\rho < \kappa$ a function $f_{\rho}\colon(\mu^{+})^{<\omega}\to\mu^{+}$ such that for all $R\subseteq\mu^{+}$ of cardinality $\rho^{+\omega+1}$, $|f_{\rho}^{\prime\prime}[R]^{<\omega}\cap\mu|\neq\rho^{+\omega}$.

Let us code this sequence of functions as a subset of $H(\kappa^{+\omega+1})$, $A$.

Let $j\colon \langle H(\rho^{+\omega+1}), \in, B\rangle \to \langle H(\kappa^{+\omega +1}), \in, A\rangle$ be an elementary embedding as in the definition of $(+\omega+1)$-subcompactness. $B$ codes a sequence of functions from $(\rho^{+\omega+1})^{<\omega}$ to $\rho^{+\omega+1}$, $\langle g_\eta \mid \eta < \rho\rangle$, witnessing the failure of Chang's conjecture. Note that $\rho^{+\omega}$ is strong limit.

Let us look at $f_{\rho}$. Let $R=j^{\prime\prime}\rho^{+\omega + 1}\in H(\kappa^{+\omega+1})$. By our assumption, $|f_{\rho}^{\prime\prime}[R]^{<\omega}\cap \mu|>\rho^{+\omega}$. Let $n$ be the first ordinal such that $|f^{\prime\prime}[R]^{<\omega}\cap \kappa^{+n}|=\rho^{+\omega+1}$.

Since $\cof\rho^{+\omega}=\omega$ and $\rho^{+\omega+1}$ is regular, it is impossible that $n=\omega$, so $n$ is a natural number.

Let $\langle\vec{\alpha}_{\xi}\mid \xi<\rho^{+\omega+1}\rangle$ be a sequence of elements in $(\rho^{+\omega+1})^{<\omega}$, and assume that $\langle f_{\rho}(j(\vec{\alpha_\xi}))\mid \xi<\rho^{+\omega+1}\rangle$ is strictly increasing sequence of ordinals below $\kappa^{+n}$.

By elementarity, for every $\xi\neq \xi^{\prime}$,
\[\langle g_{\eta}(\vec{\alpha}_{\xi})\cap \rho^{+n}\mid\eta<\rho\rangle\neq\langle g_{\eta}(\vec{\alpha}_{\xi^{\prime}})\cap\rho^{+n}\mid\eta<\rho\rangle\]
Otherwise, for every $\tilde{\rho}<\kappa$ we would get that
\[f_{\tilde{\rho}}(j(\vec{\alpha}_{\xi}))\cap\kappa^{+n}=f_{\tilde{\rho}}(j(\vec{\alpha}_{\xi^{\prime}}))\cap\kappa^{+n}\]
and evaluating at $\tilde{\rho}=\rho$ we get a contradiction.

The number of possible sequences of this form is $(\rho^{+n})^{\rho}$ which is strictly smaller than $\rho^{+\omega}$ - a contradiction.
\end{proof}
\begin{theorem}\label{thm: cc aleph omega from subcompact}
Let $\kappa$ be a $(+\omega+1)$-subcompact cardinal, $\kappa^{+\omega}$ strong limit.
There is $\rho < \kappa$ such that forcing with $\Col(\omega,\rho^{+\omega})\times\Col(\rho^{+\omega+2},<\kappa)$ forces the instance of Chang's conjecture $(\aleph_{\omega+1},\aleph_{\omega})\chang(\aleph_{1},\aleph_{0})$.
\end{theorem}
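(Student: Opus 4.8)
The plan is to transport the counting argument behind Theorem~\ref{thm: cc subcompact} into the generic extension, letting the collapse do nothing but shrink the trace. First I would record the cardinal arithmetic of $V[G]$, for $G$ generic for $\Col(\omega,\rho^{+\omega})\times\Col(\rho^{+\omega+2},<\kappa)$. The first factor makes $\rho^{+\omega}$ countable, so $\rho^{+\omega+1}=\aleph_1$ and $\rho^{+\omega+2}=\aleph_2$; the second factor is $<\rho^{+\omega+2}$-closed and (as $\kappa$ is inaccessible) $\kappa$-c.c., so it collapses $(\rho^{+\omega+2},\kappa)$ and turns $\kappa$ into $\aleph_3$, whence $\aleph_\omega=\kappa^{+\omega}$ and $\aleph_{\omega+1}=\kappa^{+\omega+1}$. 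Thus the instance to be forced is $(\kappa^{+\omega+1},\kappa^{+\omega})\chang(\rho^{+\omega+1},\aleph_0)$, and by the functional characterization it suffices to find, for every $f\colon(\kappa^{+\omega+1})^{<\omega}\to\kappa^{+\omega}$ in $V[G]$, a set $I$ with $|I|=\aleph_1$ and $|f\image I^{<\omega}|\le\aleph_0$.

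Following Theorem~\ref{thm: cc subcompact}, I would argue by contradiction. If the theorem fails then for every $\rho<\kappa$ the forcing forces a counterexample, so I may fix a sequence $\langle\dot f_\rho\mid\rho<\kappa\rangle$ of names for counterexamples and code it as a single $A\subseteq H(\kappa^{+\omega+1})$. Applying $(+\omega+1)$-subcompactness to $A$ gives $\rho^*<\kappa$, $B$, and an elementary \[j\colon\langle H(\rho^{*+\omega+1}),\in,\rho^*,B\rangle\to\langle H(\kappa^{+\omega+1}),\in,\kappa,A\rangle,\qquad\crit j=\rho^*,\ j(\rho^*)=\kappa,\] where $B$ codes the reflected sequence $\langle\dot{\bar f}_\eta\mid\eta<\rho^*\rangle$ of names for downstairs counterexamples and the name at the critical index is the diagonal $\dot f_{\rho^*}=j(\langle\dot{\bar f}_\eta\rangle)(\rho^*)$, a name over $\Col(\omega,\rho^{*+\omega})\times\Col(\rho^{*+\omega+2},<\kappa)$. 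Since $\kappa^{+\omega}$ is strong limit, so is $\rho^{*+\omega}$.

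The candidate is $I=j\image\rho^{*+\omega+1}$, of size $\rho^{*+\omega+1}=\aleph_1$, and it remains to bound $f\image I^{<\omega}$ for $f=(\dot f_{\rho^*})_G$. Here I would repeat the level-by-level count of Theorem~\ref{thm: cc subcompact}: were $|f\image I^{<\omega}\cap\kappa^{+n}|$ as large as $\rho^{*+\omega+1}$ for some $n$, choose $\rho^{*+\omega+1}$ tuples $\bar a_\xi$ realizing distinct values below $\kappa^{+n}$. Each such value is the generic evaluation of $j(\langle\dot{\bar f}_\eta(\bar a_\xi)\mid\eta<\rho^*\rangle)(\rho^*)$, and its part below $\kappa^{+n}$ is governed by a $\rho^*$-indexed sequence of ordinals below $\rho^{*+n}$; distinct values force distinct sequences, of which there are only $(\rho^{*+n})^{\rho^*}<\rho^{*+\omega}$ since $\rho^{*+\omega}$ is strong limit. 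This contradiction bounds $|f\image I^{<\omega}|$ by $\rho^{*+\omega}$, which the first collapse renders countable; as $\dot f_{\rho^*}$ was a counterexample, this is the desired contradiction.

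The crux is what makes this count legitimate once $f$ lives only in $V[G]$. As $f$ is generic, its values are not ground-model objects, so to replace $f(j(\bar a))$ by downstairs data one must push the generic for the gap collapse $\Col(\rho^{*+\omega+2},<\kappa)$ through $j$---equivalently, through the extender it induces---to generics for the reflected collapses, so that an ultrapower computation identifies $(\dot f_{\rho^*}(j(\bar a)))_G$ with the diagonal of the evaluated downstairs counterexamples. This is the delicate point precisely because $\Col(\rho^{*+\omega+2},<\kappa)$ is not in the range of $j$; I expect to lean on its $<\rho^{*+\omega+2}$-closure and $\kappa$-c.c.\ to keep the relevant name-values below $\rho^{*+\omega+1}$, and on the smallness and homogeneity of $\Col(\omega,\rho^{*+\omega})$---absorbable by the standard name-model argument $I[g_0]\prec M$---to handle the first factor. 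Verifying that neither collapse inflates the level-by-level count past $\rho^{*+\omega}$ is where the real work sits.
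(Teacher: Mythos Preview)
The setup of your argument matches the paper's: you correctly identify the cardinal arithmetic in $V[G]$, frame the contradiction via names $\dot f_\rho$ coded into $A$, invoke subcompactness to get $j$ with critical point $\rho^*$, take $I=j\image\rho^{*+\omega+1}$, and locate a finite $n$ with $|f\image I^{<\omega}\cap\kappa^{+n}|=\rho^{*+\omega+1}$. The divergence is at the counting step.

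Your plan is to reproduce the sequence-counting of Theorem~\ref{thm: cc subcompact} verbatim: associate to each $b_\xi$ the sequence $\langle g_\eta(b_\xi)\cap\rho^{*+n}\mid\eta<\rho^*\rangle$ and bound the number of such sequences by $(\rho^{*+n})^{\rho^*}$. The problem is that this sequence does not exist. Each $\dot g_\eta$ is a name over $\Col(\omega,\eta^{+\omega})\times\Col(\eta^{+\omega+2},<\rho^*)$, and these posets are pairwise incompatible (they collapse different intervals), so no single generic evaluates all the $\dot g_\eta$ simultaneously. Your proposal to ``push the generic through $j$'' does not produce such a common evaluation either: for $\eta<\rho^*=\crit j$ one has $j(\dot g_\eta)=\dot f_\eta$, a name over $\Col(\omega,\eta^{+\omega})\times\Col(\eta^{+\omega+2},<\kappa)$, whereas your generic $G$ lives over $\Col(\omega,\rho^{*+\omega})\times\Col(\rho^{*+\omega+2},<\kappa)$---a different poset altogether. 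No extender or ultrapower maneuver manufactures a coherent family of downstairs generics here, so the count never gets off the ground.

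The paper avoids this entirely. After reducing (via closure of the second factor and pigeonhole on the $\rho^{*+\omega}$-sized first factor) to ground-model tuples $b_\xi$ and a single condition $p$, it uses elementarity only \emph{pairwise}: for each $\xi<\xi'$, the upstairs fact that some $\tilde\rho<\kappa$ (namely $\rho^*$) and some condition force $f_{\tilde\rho}(j(b_\xi))<f_{\tilde\rho}(j(b_{\xi'}))<\kappa^{+n}$ reflects to the existence of $\eta<\rho^*$ and $q\in\Col(\omega,\eta^{+\omega})\times\Col(\eta^{+\omega+2},<\rho^*)$ forcing $g_\eta(b_\xi)<g_\eta(b_{\xi'})<\rho^{*+n}$. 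This defines a coloring of pairs into $\rho^*\times V_{\rho^*}$; \Erdos--Rado applied to the first $(2^{\rho^{*+n}})^+$ indices yields a homogeneous set of size $\rho^{*+n+1}$ with fixed color $(\eta,q)$. Below $q$, the values $g_\eta(b_\xi)$ form a strictly increasing sequence of length $\rho^{*+n+1}$ inside $\rho^{*+n}$, which is the contradiction. The \Erdos--Rado step is precisely the idea you are missing: it replaces the impossible simultaneous evaluation of all $\dot g_\eta$ by a single $\eta$ and a single condition, on which the ordinal count becomes legitimate.
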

\begin{proof}
Let $\mu = \kappa^{+\omega}$.

Assume otherwise, and let us pick for all $\rho<\kappa$ a $\Col(\omega,\rho^{+\omega})\times\Col(\rho^{+\omega+2},<\kappa)$-name for a function $f_{\rho}\colon(\mu^{+})^{<\omega}\to\mu^{+}$, witnessing the failure of Chang's conjecture in the generic extension. Note that we can still code this set of names as a subset of $H(\kappa^{+\omega+1})$, $A$.

Let $j\colon \langle H(\rho^{+\omega+1}),\in ,B\rangle\to \langle H(\kappa^{+\omega+1}), \in, A\rangle$ be the subcompact embedding. As in the previous proof, we denote by $\dot{g}_\eta$ the names of the functions coded by $B$. Let us look at $\dot{f}_\rho$ and the forcing $\Col(\omega,\rho^{+\omega})\times\Col(\rho^{+\omega+2},<\kappa)$.

Let $R=j^{\prime\prime}\rho^{+\omega+1}$.

By the assumption, $\Vdash|f_{\rho}^{\prime\prime}[R]^{<\omega}\cap \mu| = \rho^{+\omega+1}$ and by the same argument as before, there is some condition $(p_{0},p_{1})$ and a minimal ordinal $n$ such that
\[(p_{0},p_{1})\Vdash|f_{\rho}^{\prime\prime}[R]^{<\omega}\cap \kappa^{+n}| = \rho^{+\omega+1}.\]
Since $\rho^{+\omega+1}$ is still regular after the forcing, $n$ must be finite.

Let $\{\dot{a_\xi}\mid \xi < \rho^{+\omega+1}\}$ be a sequence of names of finite sequences of ordinals below $\rho^{+\omega+1}$ such that it is forced by the empty condition that \[f_\rho(j(\dot a_\xi)) < f_\rho(j(\dot a_{\xi^\prime})) < \kappa^{+n}\] for all $\xi < \xi^\prime$.

Since the $\Col(\rho^{+\omega+2}, < \kappa)$ is $\rho^{+\omega+2}$-closed, we can find a condition that below it the value of $\dot a_\xi$ is determined only by the first coordinate for all $\xi < \rho^{+\omega+1}$.

There are $\rho^{+\omega}$ many conditions in $\Col(\omega,\rho^{+\omega})$. Therefore, there is a single condition $p$ and a set of size $\rho^{+\omega+1}$ of finite sequences such that $p$ decides the value of all of them. Let $\{b_{\xi}\mid \xi<\rho^{+\omega+1}\}$ be an enumeration of this set.

Back in $H(\rho^{+\omega+1})$, for every pair of ordinals $\xi<\xi^{\prime}$ there is $\eta < \rho$ and a condition $q\in \Col(\omega,\eta^{+\omega}) \times \Col(\eta^{+\omega + 2}, <\rho)$ that forces $g_{\eta}(b_{\xi})< g_{\eta}(b_{\xi^{\prime}})<\rho^{+n}$.

This defines a coloring $[\rho^{+\omega+1}]^{2}\to \rho \times V_{\rho}$. Let us restrict the coloring to the first $(2^{\rho^{+n}})^{+}$ elements. By the \Erdos-Rado theorem, there is a homogeneous set of cardinality $\rho^{+n + 1}$, $H$. Let $(\eta, q)$ be the color of all pairs in $H$. So for every $\xi < \xi^\prime$ in $H$,
\[q \Vdash_{\Col(\omega, \eta^{+\omega})\times\Col(\eta^{+\omega+2}, <\rho)} \dot{g}_\eta(b_\xi) < \dot{g}_\eta(b_{\xi^\prime}) < \rho^{+n}\]
and in particular, after forcing with $\Col(\omega, \eta^{+\omega})\times\Col(\eta^{+\omega+2}, <\rho)$ below the condition $q$, there is a set of order type $\rho^{+n+1}$ below $\rho^{+n}$, which is impossible.
\end{proof}

Assuming the consistency of a $(+\omega+1)$-subcompact cardinal, it is consistent that $\kappa$ is $(+\omega+1)$-subcompact and $\square(\kappa^{+\omega+1})$ holds (yet $\square^\star_{\kappa^{+\omega}}$ and even the approachability property must fail, by Theorem \ref{thm: cc subcompact}). Therefore in the model of Theorem~\ref{thm: cc aleph omega from subcompact} we may have $\square(\aleph_{\omega+1})$ and therefore $\aleph_{\omega+1}\not\xrightarrowdbl[\MM]{} \aleph_1$.

The assumption in both Theorem~\ref{thm: cc subcompact} and Theorem~\ref{thm: cc aleph omega from subcompact} is slightly below the assumption of $\kappa$ being $\kappa^{+\omega+1}$-supercompact.

\begin{question}
Is Chang's conjecture $(\aleph_{\omega+1},\aleph_\omega)\chang(\aleph_1,\aleph_0)$ consistent assuming the consistency of a strongly compact cardinal?
\end{question}

\subsection{MM submodels}
In this subsection we investigate which large cardinal assumptions imply the $\MM$-reflection. We first deal with the case $\lambda\mmchang\kappa$ for $\lambda$ successor cardinal.
\subsubsection{\texorpdfstring{$\Pi^1_1$}{pi11} Subcompact cardinals}
The following large cardinal notion was defined by Neeman and Steel in \cite{NeemanSteelSubcompact}. We will use a slightly different notation than the one used in \cite{NeemanSteelSubcompact}.
\begin{definition}
A cardinal $\kappa$ is $\Pi^1_1$-$(+\alpha)$-subcompact, if for every $A\subseteq H(\kappa^{+\alpha})$ and $\Pi^1_1$-statement $\phi$ such that $\langle H(\kappa^{+\alpha}), \kappa, \in, A\rangle\models \phi$ there is $\rho < \kappa$ and $B\subseteq H(\rho^{+\alpha})$ such that $\langle H(\rho^{+\alpha}), \in, \rho, B\rangle\models \phi$.\footnote{In \cite{NeemanSteelSubcompact}, Neeman and Steel denoted by $\Pi^2_1$-subcompact the large cardinal notion that is denoted here by $\Pi^1_1$-$(+1)$-subcompact.}
\end{definition}

In order to get a general feeling about the place of $\Pi^1_1$-subcompact cardinals in the large cardinal hierarchy, we remark that a $\Pi^1_1$-($+0$)-subcompact is weakly compact, while $(+0)$-subcompact cardinal is inaccessible cardinal\footnote{This depends on the precise definition of $H(\chi)$ for singular $\chi$. If we define $H(\chi)$ to be the collection of sets of cardinality $<\chi$ such that every member of them belongs to $H(\chi)$ we conclude that $(+0)$-subcompact cardinal is Mahlo.}.

\begin{lemma}
Let $\kappa$ be a $\Pi^1_1$-$(+\alpha)$-subcompact cardinal, $\alpha < \kappa$. Then there is a stationary subset of $\kappa$ of $(+\alpha)$-subcompact cardinals.
\end{lemma}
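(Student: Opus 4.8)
The plan is to show that the set of $(+\alpha)$-subcompact cardinals below $\kappa$ is stationary by a standard reflection argument: given any club $C\subseteq\kappa$, I would encode $C$ together with enough of the universe as a predicate $A\subseteq H(\kappa^{+\alpha})$, express ``$\kappa$ is a limit point of $C$'' (together with whatever auxiliary data I need) as part of the structure, and then use $\Pi^1_1$-$(+\alpha)$-subcompactness to reflect a $\Pi^1_1$-statement asserting both that the reflected point lies in (the accumulation of) $C$ and that it is itself $(+\alpha)$-subcompact. The key observation is that $(+\alpha)$-subcompactness of $\rho$ is, over the structure $\langle H(\rho^{+\alpha}),\in,\rho,B\rangle$, a $\Pi^1_1$-statement: it asserts that for \emph{every} $A'\subseteq H(\rho^{+\alpha})$ (a second-order universal quantifier, hence $\Pi^1_1$) \emph{there exist} $\bar\rho<\rho$, $B'\subseteq H(\bar\rho^{+\alpha})$, and an elementary embedding $j\colon\langle H(\bar\rho^{+\alpha}),\in,\bar\rho,B'\rangle\to\langle H(\rho^{+\alpha}),\in,\rho,A'\rangle$ with critical point $\bar\rho$. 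The matrix after the universal second-order quantifier is first-order expressible over $H(\rho^{+\alpha})$ (the witnesses $\bar\rho, B', j$ all live inside $H(\rho^{+\alpha})$ since $\alpha<\kappa$, so $\bar\rho^{+\alpha}<\rho^{+\alpha}$), so the whole assertion is genuinely $\Pi^1_1$.

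First I would fix a club $C\subseteq\kappa$ and let $A\subseteq H(\kappa^{+\alpha})$ code $C$ (say $A=C$, viewing ordinals below $\kappa$ as elements of $H(\kappa^{+\alpha})$). The statement $\phi$ I want to reflect asserts, over $\langle H(\kappa^{+\alpha}),\in,\kappa,A\rangle$, the conjunction of: (i) $A$ is a club subset of $\kappa$ (first-order), and (ii) $\kappa$ is $(+\alpha)$-subcompact, formalized by the $\Pi^1_1$-sentence above with $\kappa$ in place of $\rho$. Since $\kappa$ is $\Pi^1_1$-$(+\alpha)$-subcompact, clause (ii) holds in the top model by hypothesis, and clause (i) holds by choice of $A$. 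Reflection then yields $\rho<\kappa$ and $B\subseteq H(\rho^{+\alpha})$ with $\langle H(\rho^{+\alpha}),\in,\rho,B\rangle\models\phi$. From clause (i) at $\rho$ I conclude $B$ is a club in $\rho$ consistent with $A$; the point is that $\rho$ must be an accumulation point of $A=C$, so $\rho\in C$ (using that $C$ is closed). From clause (ii) I conclude that $\rho$ is genuinely $(+\alpha)$-subcompact. Thus $\rho\in C$ is $(+\alpha)$-subcompact, and since $C$ was an arbitrary club, the $(+\alpha)$-subcompact cardinals form a stationary subset of $\kappa$.

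The main obstacle will be the bookkeeping in clause (i): I must make sure the reflected predicate $B$ really forces $\rho$ to be a limit point of the \emph{original} club $C$, not merely a limit point of some club internal to $H(\rho^{+\alpha})$. The clean way to handle this is to demand that $A$ code $C$ in a sufficiently absolute fashion and to include in $\phi$ the first-order assertion ``$A$ is unbounded and closed below $\kappa$''; reflecting this gives that $B=A\cap\rho$ (or its coded version) is unbounded in $\rho$, whence $\rho=\sup(C\cap\rho)\in C$. A second point requiring care is verifying that the $\Pi^1_1$ complexity of clause (ii) is preserved under the reflection, i.e.\ that the existential first-order matrix (asserting existence of the embedding $j$ and its target data) is correctly evaluated in $H(\rho^{+\alpha})$; this is where I use $\alpha<\kappa$ to guarantee that all relevant objects are of size $<\rho$ and hence available inside $H(\rho^{+\alpha})$. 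Once these two absoluteness issues are settled, the argument is routine.
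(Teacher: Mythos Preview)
Your proposal is correct and follows essentially the same route as the paper's (very terse) proof: express ``$\kappa$ is $(+\alpha)$-subcompact'' as a $\Pi^1_1$ sentence over $\langle H(\kappa^{+\alpha}),\in,\kappa,A\rangle$, throw in a club $C$ as an additional predicate, and reflect. Two remarks on the details. First, the paper is explicit about one point you gloss over: the assertion ``$j$ is an elementary embedding into $\langle H(\rho^{+\alpha}),\in,\rho,A'\rangle$'' is not directly first-order over $H(\rho^{+\alpha})$ (by Tarski), so one replaces full elementarity by $\Sigma_1$-elementarity relative to an extra predicate coding the full elementary diagram; this makes the matrix genuinely first-order and the whole statement $\Pi^1_1$. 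Second, your worry that the reflected predicate $B$ might not agree with $C\cap\rho$ is handled by the elementary embedding $j\colon\langle H(\rho^{+\alpha}),\in,\rho,B\rangle\to\langle H(\kappa^{+\alpha}),\in,\kappa,A\rangle$ that accompanies $\Pi^1_1$-$(+\alpha)$-subcompactness (the paper's stated definition omits $j$, but the paper uses it freely elsewhere and it is part of the Neeman--Steel notion being cited): since $\crit j=\rho$, one gets $B\cap\rho=A\cap\rho=C\cap\rho$, and unboundedness of $B$ in $\rho$ then yields $\rho\in C$.
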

\begin{proof}
Note that the notion of $(+\alpha)$-subcompact cardinal is not changed when one weakens the assumption of $j$ to assuming only that $j$ is $\Sigma_1$ elementary relative to additional predicate (by coding the full elementary diagram). Using this interpretation, the statement "$\kappa$ is $(+\alpha)$-subcompact" is $\Pi^1_1$-statement and therefore reflects downwards. By adding a predicate $C$ for a club, we obtain the desired result.
\end{proof}

\begin{theorem}\label{thm: MM elementariness from pi11-subcompact}
Let $\kappa$ be $\Pi^1_1$-$(+\alpha)$ subcompact, $\alpha < \kappa$ successor ordinal and assume that $|H(\kappa^{+\alpha})| = \kappa^{+\alpha}$. Then $\kappa^{+\alpha}\mmchang < \kappa$.
\end{theorem}
\begin{proof}
Let $\lambda = \kappa^{+\alpha}$.

Let $\mathcal{A}$ be an algebra on $\lambda$. $\mathcal{A}$ can be coded by a single predicate on $H(\lambda)$, $A$. Moreover, we assume that $A$ codes also the truth predicate of $\mathcal{A}$ and that the language of $\mathcal{A}$ contains some bijection between $H(\lambda)$ and $\lambda$.

For every formula $\varphi$ in the language of the model $\mathcal{A}$ with the $\MM$ quantifiers we enrich the language of $\mathcal{A}$ by adding one function symbol. For formula $\varphi$ of the form $Q^n x_0, \dots, x_{n-1} \psi(x_0, \dots, x_{n-1}, b)$, let us add the function symbol $F_\varphi\colon \mathcal{A}\times\mathcal{A}\to\mathcal{A}$ and interpret it such that whenever $\mathcal{A}\models \varphi(b)$ (where $b\in\mathcal{A}$), the function $x\to F_\varphi(x, b)$ is one to one and its image, $I$, witnesses $\varphi$. Namely, \[\forall x_0, \dots, x_{n-1}\in \mathcal{A},\ \mathcal{A}\models \psi(F_\varphi(x_0, b),\dots,F_\varphi(x_{n-1}, b), b).\] We will assume that the truth predicate $A$ contains also the truth value of all formulas in the enriched language.

We want to code the fact that $A$ is a truth predicate for $\MM$-formulas into a single $\Pi^1_1$-sentence.  

Let $\Phi$ be the following $\Pi^1_1$-sentence: For every $X\subseteq H(\lambda)$ one of the following cases hold:
\begin{enumerate}
\item There is $y\in X$ which is not of the form $\langle \phi, p, x\rangle$ where $\phi$ is a (\Godel number of a) $\MM$-formula, $x,p\in H(\lambda)$.
\item There is a pair of elements $\langle \phi, p, x\rangle, \langle \phi^\prime, p^\prime, x^\prime\rangle\in X$ with $\langle \phi, p\rangle \neq \langle \phi^\prime, p^\prime\rangle$
\item $\phi = Q^n x_0, \dots, x_{n-1} \varphi(x_0, \dots, x_{n-1}, p)$ and there are $a_0, \dots, a_{n-1}$ such that $\forall i < n,\,\langle \phi, p, a_i\rangle\in X$ and $\mathcal{A}\models\neg\varphi(a_0, \dots, a_{n-1})$.
\item $X$ is bounded.
\item ``$\phi(p)$'' belongs to the truth predicate of $\mathcal{A}$.
\end{enumerate}
Where all the truth values are evaluated using the truth predicate.

Clearly, the formula $\Phi$ holds in $\mathcal{A}$. 

Let $\rho$, $B$ and $j\colon \langle H(\rho^{+\alpha}),\in, B\rangle\to \langle H(\kappa^{+\alpha}), \in, A\rangle$ witness the assumption that $\kappa$ is $\Pi^1_1$-$(+\alpha)$-subcompact relative to the $\Pi^1_1$ formula $\Phi$.

Let us claim that $\mathcal{B} = j^{\prime\prime} H(\rho^{+\alpha})$ is a $\MM$ elementary substructure of $\mathcal{A}$.

We need to show that for a $\MM$-formula $\varphi$, and $b\in \mathcal{B}$, $\mathcal{B}\models \varphi(b)$ if and only if $\mathcal{A}\models \varphi(b)$.

We prove the claim by induction on the complexity of $\varphi$. Elementarity for first order quantifiers and connectives follows from the elementarity of $j$. Let us assume that $\varphi$ has the form $Q^n x_0, \dots, x_{n-1} \psi(x_0, \dots, x_{n-1}, y)$, and that the induction hypothesis holds for all formulas in the complexity level of $\psi$.

If $\mathcal{A}\models \varphi(b)$, then $g(x)=F_\varphi(x, b)$ enumerates some set $I$ such that for every $a_0,\dots, a_{n-1}\in I$, $\mathcal{A}\models \psi(a_0,\dots, a_{n-1})$. By elementarity of $j$, when restricting $g$ to elements of $\mathcal{B}$ its range will be a subset of $\mathcal{B}$ which is a $\psi$-block of cardinality $|\mathcal{B}|$.

Let us assume that $\mathcal{B}\models\varphi$. Recall that $j$ is an isomorphism between $H(\rho^{+\alpha})$ and $\mathcal{B}$. Thus, 
\[\langle H(\rho^{+\alpha}), \in, B\rangle\models \exists I,\, |I|  \text{ unbouneded } \forall a_0,\dots, a_{n-1}\in I,\, \psi(a_0,\dots, a_{n-1}, j^{-1}(b))\]
where $I$ is the preimage under $j$ of the subset of $\mathcal{B}$ which witnesses $\varphi$. Let $X$ to be $\{\langle \varphi, b, x\rangle \mid x \in I\}$. So $\langle H(\rho^{+\alpha}), \in, B\rangle$ is a model for $\neg \Phi$ as witnessed by $X$, unless $\mathcal{A}\models \varphi(b)$.
\end{proof}
Theorem~\ref{thm: MM elementariness from pi11-subcompact} is parallel to Theorem~\ref{thm: cc subcompact}. Unfortunately, we do not know how to generalize the stronger result of Theorem~\ref{thm: cc aleph omega from subcompact}. For successors of regulars and target $\aleph_1$, subsection~\ref{subsec: mm to aleph1 successor to regular} gives some partial results.
\subsubsection{Inaccessible cardinals} For inaccessible cardinals, the consistency strength seems to be lower.
\begin{theorem}
Let $\kappa$ be Ramsey cardinal. Then for every regular cardinal $\omega < \mu<\kappa$, $\kappa\mmchang \mu$.
\end{theorem}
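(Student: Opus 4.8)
The plan is to realize the desired $\MM$-elementary submodel as a Skolem hull generated by a set of indiscernibles, using the Ramseyness of $\kappa$ to produce the indiscernibles and the regularity of $\mu$ to run a $\Delta$-system argument. First I would reduce to a convenient model: given a model $M$ of size $\kappa$ over a countable language, assume its universe is $\kappa$, expand it by Skolem functions, and -- following the enrichment used in the proof of Lemma~\ref{lem: cc for q1 ref} -- add for each first-order formula $\varphi(x_0,\dots,x_{n-1},\bar y)$ a function symbol $F_\varphi$ so that whenever $M\models Q^n\bar x\,\varphi(\bar x,\bar p)$ the map $x\mapsto F_\varphi(x,\bar p)$ is injective with range a $\varphi$-cube of size $\kappa=|M|$. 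The resulting structure $M^+$ still has a countable language and size $\kappa$. Since $\kappa$ is Ramsey, i.e.\ $\kappa\to(\kappa)^{<\omega}_2$, the structure $M^+$ carries a set of order-indiscernibles $I$ of order type $\kappa$ (a standard consequence for structures in a language of size $<\kappa$). I would then fix an initial segment $J\subseteq I$ with $\otp J=\mu$ and let $N$ be the Skolem hull of $J$ in $M^+$; since the language is countable and $\mu>\omega$ we get $|N|=\mu$, and $N\prec M^+$ (hence $N\prec M$) as a Skolem hull.

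It remains to verify $N\prec_{Q^n}M$ for every $n$, that is, both reflection directions of the $\varphi$-cube quantifier for each $\varphi$ and each $\bar p\in N$. The downward direction $M\models Q^n\bar x\,\varphi(\bar x,\bar p)\Rightarrow N\models Q^n\bar x\,\varphi(\bar x,\bar p)$ is immediate from the enrichment: assuming $M\models Q^n\bar x\,\varphi$, the map $x\mapsto F_\varphi(x,\bar p)$ is injective on $M$ with a $\varphi$-cube as range, and since $N$ is closed under $F_\varphi$ and $\bar p\in N$, its restriction to $N$ has range a subset of that cube of size $|N|=\mu$ lying inside $N$. A subset of a $\varphi$-cube is a $\varphi$-cube, so $N\models Q^n\bar x\,\varphi$.

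The heart of the argument is the upward direction $N\models Q^n\bar x\,\varphi(\bar x,\bar p)\Rightarrow M\models Q^n\bar x\,\varphi(\bar x,\bar p)$, where a $\varphi$-cube of size $\mu$ living in $N$ must be stretched to one of size $\kappa$ in $M$. Write $\bar p=s(\bar e)$ with $\bar e\in[J]^{<\omega}$, and let $A\subseteq N$ be a $\varphi$-cube with $|A|=\mu$. Each $a\in A$ equals $t_a(\bar c_a)$ for a Skolem term $t_a$ and $\bar c_a\in[J]^{<\omega}$; as there are only countably many terms and $\mu$ is regular uncountable, I would thin $A$ to a subfamily of size $\mu$ using a single term $t$ and tuples of a fixed length, then apply the $\Delta$-system lemma (this is exactly where regularity and uncountability of $\mu$ are used) to obtain a root $\bar r$ and pairwise disjoint petals. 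Since the petals are disjoint and $\otp J=\mu$ is regular, all but $<\mu$ of them lie above $\max(\bar e\cup\bar r)$, so after a final thinning to one of the finitely many relative-position types I may assume the petals are uniform in shape and all sit in the top gap of $\bar e\cup\bar r$. Now I exploit $\otp I=\kappa\gg\mu$: the top gap of the finite set $\bar e\cup\bar r$ in $I$ still has order type $\kappa$, so I can choose $\kappa$-many pairwise disjoint petals there of the same shape, each together with $\bar r$ forming a tuple $\bar d^\alpha$, yielding elements $t(\bar d^\alpha)$ for $\alpha<\kappa$. By indiscernibility of $I$ in $M^+$, the truth value of $\varphi$ on any tuple built from the $t(\bar d^\alpha)$ (in any order and with repetitions) depends only on the order type of the corresponding union with $\bar r$ and $\bar e$, and this order type already occurs among tuples drawn from the cube $A$; hence every such value is true and the $t(\bar d^\alpha)$ are pairwise distinct. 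Thus $\{t(\bar d^\alpha)\mid\alpha<\kappa\}$ is a $\varphi$-cube of size $\kappa$, giving $M\models Q^n\bar x\,\varphi(\bar x,\bar p)$.

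Combining both directions over all $n$ and all $\varphi$ yields $N\prec_{\MM}M$ with $|N|=\mu$, which is precisely $\kappa\mmchang\mu$. I expect the upward direction to be the main obstacle: one must carry out the refinement so that the relevant cube patterns are governed by a finite amount of order-type data, all of it realized inside the original $\mu$-cube, and one must position the petals above the finitely many indiscernibles supporting the parameters so that the long set $I$ has room to copy the configuration $\kappa$ times. The regularity and uncountability of $\mu$ drive the thinning and $\Delta$-system step, while the length and regularity of $\kappa$ supply the room for the stretch.
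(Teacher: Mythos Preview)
Your approach is essentially identical to the paper's: enrich by Skolem witnesses for the $\MM$-quantifiers, take the hull of $\mu$ many indiscernibles, and for the upward direction thin a given cube via a $\Delta$-system argument and stretch along $I$ by indiscernibility. Two points need tightening.

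First, the enrichment by the functions $F_\varphi$ must be carried out for all $\MM$-formulas $\varphi$, not only first-order ones (the paper says this explicitly: ``functions that produce witnesses for all $\MM$-formulas''). With your enrichment as stated, the downward direction fails at the first nested occurrence of a $Q$-quantifier, since there is no $F_\varphi$ available when $\varphi$ itself contains $Q^m$.

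Second, after the $\Delta$-system step you should further thin so that the surviving petals are pairwise \emph{non-interleaving} (for any two petals $s,s'$, either $\max s<\min s'$ or $\max s'<\min s$); this is possible because $\mu$ is regular and the petals are disjoint finite subsets of an order of type $\mu$. The paper records exactly this condition. Without it, your assertion that ``this order type already occurs among tuples drawn from the cube $A$'' is not justified: the natural choice for the $\kappa$-many stretched petals is a non-interleaving family, and that particular $n$-petal configuration need not be realized among merely disjoint petals in $A$. Once you thin $A$ to a non-interleaving family of size $\mu$, any $n$ stacked petals in the stretched family have the same labelled order type over $\bar e\cup\bar r$ as $n$ stacked petals from $A$, and indiscernibility finishes the argument as you intended.
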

\begin{proof}
Let $\mathcal{A}$ be an algebra on $\kappa$. Let $I$ be a set of indiscernibles for $\mathcal{A}$ and let $\mathcal{B}$ be the substructure of $\mathcal{A}$ generated by the first $\mu$ indiscernibles.

As in the previous proof, in order to show that $\mathcal{B}\prec_{\MM}\mathcal{A}$, we may enrich the language of $\mathcal{A}$ by functions that produce witnesses for all $\MM$-formulas that hold in $\mathcal{A}$ and show only that for every $\MM$-formula $\varphi = Q^n x_0,\dots,x_n \psi$, if $\mathcal{B}\models\varphi$ then $\mathcal{A}\models\varphi$.

Let $J\subseteq \mathcal{B}$ be any set of cardinality $\mu$. Every element $a\in J$ can be represented as $f(\alpha_0, \dots, \alpha_{m-1})$ where $f$ is one of the Skolem functions of $\mathcal{A}$ and $\alpha_i\in I$.

Since there are only countably many Skolem functions, $f$, there is some fixed $f_\star$ and uncountable subset of $J$, $K$, such that for every $a\in K$ there are indiscernibles $\alpha_0, \dots, \alpha_{m-1}$ such that $a = f_\star(\alpha_0,\dots, \alpha_{m-1})$. Moreover, if $\gamma$ is the maximal indiscernible that appears in the description of the parameters of the formula $\psi$, we may assume that for all $a\in K$, $a=f_\star(\alpha_0, \dots, \alpha_{m-1})$ and the set $\{\alpha_i \mid i < m,\,\alpha_i\leq\gamma\}$ is fixed (since $\mu$ is a regular cardinal, and there are less than $\mu$ finite sequences of indiscernibles below $\gamma$).

By $\Delta$-system arguments, there is some finite set $r\in I^k$ and a set $\tilde{J}\subseteq I^{m-k}$ such that $|\tilde{J}|=\mu$ and $f_\star(r^\smallfrown s)\in I$ for all $s\in \tilde{J}$. Moreover, we may assume that for all $s\neq s^\prime$, $f_\star(r^\smallfrown s)\neq f_\star(r^\smallfrown s^\prime)$ and $\max s < \min s^\prime$ or $\min s > \max s^\prime$. Otherwise, by indiscerniblity, every two members of $K$ were equal.

Since the members of $I$ are indiscernible, and by our assumption on $K$, we have that for every $\beta_0 < \beta_1 < \cdots < \beta_{(m - k) \cdot n - 1}$ in $I$ if we let \[b_i = f_\star(r^\smallfrown \langle \beta_{(m-k)i}, \beta_{(m-k)i + 1}, \dots, \beta_{(m-k)(i+1) -1}\rangle)\] then $\psi(b_0, \dots, b_{n-1})$. This provides a set of cardinality $\kappa$ in $\mathcal{A}$ which is a $\psi$-cube.
\end{proof}

\subsection{\texorpdfstring{$\MM$}{MM}-reflection at successor cardinals}\label{subsec: mm to large successor of regular}
In this subsection we will discuss some cases in which the one can force the $\MM$-reflection at successor of regular cardinals, starting from large cardinals at the level of huge cardinals.

We will start with the following technical definition:
\begin{definition}
Let $\mathbb{P}$ be a forcing notion. We say that $\mathbb{P}$ is $\kappa$-$\MM$ preserving if for every algebra of cardinality $\kappa$, $\mathcal{A}$, $\MM$-formula $\varphi$, and $G\subseteq\mathbb{P}$ a generic filter
\[V\models \mathcal{A}\models\varphi\iff V[G]\models\mathcal{A}\models\varphi.\]
\end{definition}

The class of $\kappa$-$\MM$ preserving forcings is closed under finite iterations. Note that in order to show that a forcing is $\kappa$-$\MM$-preserving, it is enough to show that for every formula of the right signature $\varphi$, if $V[G]\models \mathcal{A}\models Q^n \varphi$ then also $V\models \mathcal{A}\models Q^n \varphi$.
\begin{lemma}\label{lem: kappa-closed is MM preserving}
$\kappa$-closed forcing notion is $\kappa$-$\MM$ preserving.
\end{lemma}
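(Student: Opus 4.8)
The plan is to establish the full biconditional by induction on the complexity of the $\MM$-formula, using the reduction recorded in the remark so as to concentrate all the work on the backward direction of the $Q^n$-step. First I would record the two soft facts supplied by $\kappa$-closure: such a forcing adds no new sequences of length $<\kappa$, so it preserves every cardinal $\le\kappa$ — in particular the cardinal $\kappa=|\mathcal{A}|$ — and the structure $\mathcal{A}$, being an element of $V$, has the same domain and the same interpretations in $V$ and in $V[G]$, so the truth of any atomic formula on a fixed tuple is absolute. Granting these, the inductive cases for Boolean connectives and first-order quantifiers are immediate, since the domain over which one quantifies is literally the same set; and the forward direction of the $Q^n$-step is immediate as well, because a $\psi$-cube of size $\kappa$ in $V$ remains a cube of the same size in $V[G]$ ($\kappa$ is preserved, and by the induction hypothesis applied to the matrix on each fixed tuple it stays a cube).

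The heart of the argument is the backward direction for $Q^n$: assuming $V[G]\models\mathcal{A}\models Q^n x_0,\dots,x_{n-1}\,\psi(x_0,\dots,x_{n-1},b)$, I must produce a $\psi$-cube of size $\kappa$ already in $V$. Here I would fix a name $\dot A$ and a condition $p\in G$ forcing that $\dot A\subseteq\mathcal{A}$ has size $\kappa$ and is a $\psi$-cube, and then build in $V$, by recursion of length $\kappa$, a decreasing chain of conditions $\langle p_\alpha\mid\alpha<\kappa\rangle$ below $p$ together with pairwise distinct elements $a_\alpha\in\mathcal{A}$ with $p_\alpha\Vdash a_\alpha\in\dot A$. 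At a limit stage $\alpha<\kappa$ I take a lower bound of the previously chosen conditions; this is the one place $\kappa$-closure is invoked, and the crucial observation is that it is needed only for chains of length $<\kappa$. To keep the $a_\alpha$ distinct, note that any condition below $p$ forces $|\dot A|=\kappa$ and hence forces $\dot A\not\subseteq\{a_{\alpha'}\mid\alpha'<\alpha\}$ (a ground-model set of size $<\kappa$), so I can always extend to decide a fresh element of $\dot A$.

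The candidate cube is $C=\{a_\alpha\mid\alpha<\kappa\}\in V$, which has size $\kappa$ by distinctness. To verify $C$ is a $\psi$-cube in $V$, I would take arbitrary indices $\alpha_0,\dots,\alpha_{n-1}$ and set $\gamma=(\max_i\alpha_i)+1<\kappa$; since the chain is decreasing, $p_\gamma$ lies below each $p_{\alpha_i}$ and therefore forces $\{a_{\alpha_0},\dots,a_{\alpha_{n-1}}\}\subseteq\dot A$, whence $p_\gamma\Vdash\mathcal{A}\models\psi(a_{\alpha_0},\dots,a_{\alpha_{n-1}},b)$. Since the parameters are fixed ground-model elements, the induction hypothesis for $\psi$ says the truth value of $\mathcal{A}\models\psi(\bar a,b)$ agrees in $V$ and $V[G]$; as some condition forces it, it must already hold in $V$. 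Thus $C$ witnesses $V\models\mathcal{A}\models Q^n x_0,\dots,x_{n-1}\,\psi(x_0,\dots,x_{n-1},b)$.

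I expect the main obstacle to be the temptation to seek a single ground-model condition forcing all of $C$ into $\dot A$, which would require $\kappa^+$-closure and is unavailable. The resolution — and the reason the theorem holds with mere $\kappa$-closure — is that the cube property only ever refers to \emph{finitely many} elements at once, so for each finite tuple it suffices to descend to one condition $p_\gamma$ with $\gamma<\kappa$, and $\kappa$-closure provides lower bounds at all intermediate stages. The only real care needed is the bookkeeping that guarantees the $a_\alpha$ are genuinely distinct (so that $|C|=\kappa$) and that $p_\gamma$ forces the relevant finite tuple into $\dot A$.
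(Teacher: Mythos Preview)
Your proposal is correct and follows essentially the same approach as the paper: both argue by induction on formula complexity, and for the key backward $Q^n$-step both build a $\kappa$-length decreasing chain of conditions below a condition forcing the existence of a $\psi$-cube, each condition deciding a fresh element, and then observe that any finite tuple from the resulting set is handled by a single condition in the chain. Your write-up is somewhat more explicit about the forward direction and the bookkeeping for distinctness, but the argument is the same.
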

\begin{proof}
Let $\mathbb{P}$ be a $\kappa$-closed forcing notion and let $\mathcal{A}$ be a model of cardinality $\kappa$. Let $\varphi$ be a formula and assume that \[V[G] \models \mathcal{A} \models Q^n x_0, \dots, x_{n-1}\ \varphi(x_0, \dots, x_{n-1}, \vec{p}).\]
By induction on the complexity of the formula $\varphi$ we may assume that the satisfaction of $\varphi$ is absolute between $V$ and $V[G]$. Let $\dot{I}$ be a $\mathbb{P}$-name for a subset of $\mathcal{A}$, and let $p_0\in \mathbb{P}$ be a condition that forces $|\dot{I}| = \kappa$ and that $I$ is a $\varphi$-block. Let us construct, by induction, a decreasing sequence of conditions $p_i \in \mathbb{P}$, of length $\kappa$, such that $p_{i + 1} \Vdash \check{a}_i \in \dot{I}$ for some $a_i \in \mathcal{A}$, and for every $i\neq j$, $a_i \neq a_j$.

By the $\kappa$-closure - this is possible. Let $J = \{a_i \mid i < \kappa\}$. $J$ is a $\varphi$-block since for every $\vec{a} \in J^n$, if we take $\xi$ to be above all indices of the elements of $\vec{a}$, \[p_\xi\Vdash \mathcal{A} \models \varphi(\vec{a}, \vec{p})\]
and therefore $\mathcal{A} \models \varphi(\vec{a}, \vec{p})$. Thus, in $V$, $J$ is a $\varphi$ block of size $\kappa$.
\end{proof}

A forcing $\mathbb{P}$ has the $(\lambda, \kappa, <\zeta)$-c.c.\ if every set of cardinality $\lambda$ of conditions, $A$, has a subset $B\subseteq A$ of cardinality $\kappa$ such that for every $C \subseteq B$, $|C| < \zeta$, there is a lower bound for $C$. We will be interested in the case of $(\kappa, \kappa, <\omega)$-c.c.\ which is a minor strengthening of $\kappa$-Knaster property. We use the following terminology: For a forcing notion $\mathbb{P}$, we say that $\mathbb{P}$ has precaliber-$\kappa$ if it is $(\kappa, \kappa, <\omega)$-c.c.

\begin{lemma}
Assume that $\kappa$ is regular. Every forcing notion that has precaliber-$\kappa$ is $\kappa$-$\MM$ preserving.
\end{lemma}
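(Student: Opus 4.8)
The plan is to follow the template of Lemma~\ref{lem: kappa-closed is MM preserving} and reduce, via the remark preceding it, to the single implication: if $V[G]\models \mathcal{A}\models Q^n x_0,\dots,x_{n-1}\,\varphi$ then $V\models \mathcal{A}\models Q^n x_0,\dots,x_{n-1}\,\varphi$, arguing by induction on the complexity of the $\MM$-formula. For connectives and first order quantifiers the satisfaction relation over the fixed structure $\mathcal{A}$ (whose domain does not change) is absolute, so I may assume the induction hypothesis that the matrix $\varphi(x_0,\dots,x_{n-1},\vec p)$ is absolute between $V$ and $V[G]$ for ground model parameters. I also record that precaliber-$\kappa$ implies the $\kappa$-Knaster property, hence the $\kappa$-chain condition; since $\kappa$ is regular, $\kappa$ and $|\mathcal{A}| = \kappa$ are preserved, so a witnessing cube in $V[G]$ genuinely has size $\kappa$. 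The easy upward implication then follows at once, since a $\varphi$-cube in $V$ remains one in $V[G]$ and still has size $\kappa$.

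Now suppose $p_0 \in G$ forces that $\dot I$ is a $\varphi$-cube of size $\kappa$. First I would pass to the ground model by setting
\[S = \{a \in \mathcal{A} \mid \exists p \leq p_0,\ p \Vdash \check a \in \dot I\},\]
which lies in $V$. Since for a generic $G \ni p_0$ the realization of $\dot I$ is contained in $S$ and has size $\kappa$, while $S \subseteq \mathcal{A}$, I conclude $|S| = \kappa$ already in $V$. Using choice in $V$, I pick for each $a \in S$ a condition $q_a \leq p_0$ with $q_a \Vdash \check a \in \dot I$, enumerate $S = \{a_\xi \mid \xi < \kappa\}$ injectively, and consider the $\kappa$-sequence of conditions $\langle q_{a_\xi} \mid \xi < \kappa\rangle$ in $V$.

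The key step is to apply precaliber-$\kappa$ to this sequence to extract an index set $X \subseteq \kappa$ of size $\kappa$ for which $\{q_{a_\xi} \mid \xi \in X\}$ is finitely compatible, i.e.\ every finite subset has a lower bound: if the sequence takes fewer than $\kappa$ distinct values I let $X$ be the preimage of a single repeated value, and otherwise I first thin to an injective subfamily and apply the definition of $(\kappa,\kappa,<\omega)$-c.c. Then $J = \{a_\xi \mid \xi \in X\}$ has size $\kappa$ and is, I claim, a $\varphi$-cube in $V$: given $a_{\xi_0}, \dots, a_{\xi_{n-1}}$ with all $\xi_i \in X$, the finitely many conditions $q_{a_{\xi_i}}$ have a common lower bound $q^\star \leq p_0$, which forces each $\check a_{\xi_i} \in \dot I$ and hence, since $p_0$ forces $\dot I$ to be a $\varphi$-cube, forces $\mathcal{A} \models \varphi(\check a_{\xi_0}, \dots, \check a_{\xi_{n-1}}, \vec p)$. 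By the induction hypothesis this first order fact is absolute, so $\mathcal{A} \models \varphi(a_{\xi_0}, \dots, a_{\xi_{n-1}}, \vec p)$ already holds in $V$. Thus $J$ witnesses $\mathcal{A} \models Q^n x_0,\dots,x_{n-1}\,\varphi$ in $V$.

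The main obstacle is the mismatch of models: the natural witnessing sequence of conditions indexed by the realized cube lives in $V[G]$, not in $V$, and so cannot be fed directly into the precaliber property, which is a property of the ground model. The device of replacing the generic cube by the ground model set $S$ and choosing the conditions $q_a$ inside $V$ is precisely what moves the argument into $V$; after that, precaliber plays the role that $\kappa$-closure played in the previous lemma, producing a finitely compatible family whose lower bounds certify the $\varphi$-cube one tuple at a time. A secondary point to be careful about is that the chosen conditions need not be distinct, which is why I split into the two cases when extracting $X$.
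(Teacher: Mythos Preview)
Your proof is correct and follows essentially the same route as the paper's: pick in $V$ a $\kappa$-sequence of conditions each forcing a distinct element into $\dot I$, apply precaliber-$\kappa$ to extract a finitely compatible subfamily, and use the common lower bounds to verify the $\varphi$-cube condition tuple by tuple via absoluteness of $\varphi$. Your write-up is in fact more careful than the paper's on two points the paper leaves implicit --- the preservation of $\kappa$ (hence of $|\mathcal{A}|$) via the chain condition, and the case split handling the possibility that the chosen conditions $q_a$ are not pairwise distinct, so that precaliber as stated for \emph{sets} of size $\kappa$ applies.
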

\begin{proof}
Let $\mathbb{P}, \mathcal{A}, \dot{I}$ and $\varphi$ be as in the proof of \ref{lem: kappa-closed is MM preserving}. Let us construct a sequence of conditions $p_i\in \mathbb{P}$ such that $p_i \Vdash \check{a}_i \in \dot{I}$ and for every $i\neq j$, $a_i \neq a_j$. Note that we cannot assume that $p_i$ is compatible with $p_j$ for every $i, j$.

Since $\mathbb{P}$ has precaliber-$\kappa$, there is a subset $J \subseteq \kappa$ such that for every finitely many elements from $J$, $\xi_0, \dots, \xi_{m-1}$, the conditions $p_{\xi_0}, \dots, p_{xi_{m-1}}$ have a common lower bound.

In particular, for every $n$ elements from $J$, $\xi_0, \dots, \xi_{n-1}$, there is a condition $q\in \mathbb{P}$ stronger than $p_{\xi_0}, \dots, p_{xi_{n-1}}$ and $q$ forces $\varphi(\vec{a}, \vec{p})$ where $\vec{a} = \langle a_{\xi_0}, \dots, a_{\xi_{n-1}}\rangle$. Therefore, $\{a_\xi \mid \xi \in J\}$ is a $\varphi$-block in the ground model.
\end{proof}
We remark that $\kappa$-c.c.\ forcing notions may not be $\kappa$-$\MM$ preserving (e.g.\ a forcing that adds a branch to a $\kappa$-Suslin tree does not preserve the $\MM$ sentence "there is no set of cardinality $\kappa$ of incompatible elements").

\begin{lemma}
If there is a projection from $\mathbb{P}$ onto $\mathbb{Q}$ and $\mathbb{P}$ is $\kappa$-$\MM$ preserving then $\mathbb{Q}$ is also $\kappa$-$\MM$ preserving.
\end{lemma}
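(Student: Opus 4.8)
The plan is to use the standard factoring of a projection $\pi\colon\mathbb{P}\to\mathbb{Q}$: if $H\subseteq\mathbb{Q}$ is generic over $V$, then the quotient forcing $\mathbb{P}/H$, computed in $V[H]$, has the property that any $\mathbb{P}/H$-generic filter $\bar G$ over $V[H]$ is in fact $\mathbb{P}$-generic over $V$ and induces $H$ back via $\pi$. Thus we may always find a $\mathbb{P}$-generic $G$ over $V$ with $V\subseteq V[H]\subseteq V[G]$ and with $H$ the $\mathbb{Q}$-generic determined by $G$. I would prove that $\mathbb{Q}$ is $\kappa$-$\MM$ preserving by induction on the complexity of the $\MM$-formula; by the remark preceding Lemma~\ref{lem: kappa-closed is MM preserving}, the only case needing an argument is showing that no new instance of $Q^n$ appears, i.e.\ that $V[H]\models\mathcal{A}\models Q^n x_0,\dots,x_{n-1}\,\varphi$ implies $V\models\mathcal{A}\models Q^n x_0,\dots,x_{n-1}\,\varphi$ (here I reserve $H$ for the $\mathbb{Q}$-generic and $G$ for the $\mathbb{P}$-generic).

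So fix such a $\varphi$ and a witnessing $\varphi$-cube $I\in V[H]$, $I\subseteq\mathcal{A}$, together with a bijection $f\colon I\to\mathcal{A}$ in $V[H]$, and pass to a $\mathbb{P}$-generic $G\supseteq H$ as above. The first step is to check that $I$ still witnesses $Q^n\varphi$ in $V[G]$. Since $f\in V[H]\subseteq V[G]$, the equality $|I|=|\mathcal{A}|$ persists to $V[G]$. It remains to see that $I$ is still a $\varphi$-cube there, i.e.\ that for each fixed tuple $\vec a\in I^n$ the satisfaction $\mathcal{A}\models\varphi(\vec a)$ is absolute between $V[H]$ and $V[G]$. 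For this I would chain two absoluteness facts through the ground model: by the induction hypothesis applied to the subformula $\varphi$ (of lower complexity), $\mathbb{Q}$ preserves $\mathcal{A}\models\varphi(\vec a)$ between $V$ and $V[H]$; and since $\mathbb{P}$ is $\kappa$-$\MM$ preserving by hypothesis, $\mathcal{A}\models\varphi(\vec a)$ is absolute between $V$ and $V[G]$. Combining these yields absoluteness between $V[H]$ and $V[G]$, so $I$ is a $\varphi$-cube in $V[G]$ and hence $V[G]\models\mathcal{A}\models Q^n\varphi$.

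Finally, $G$ is genuinely $\mathbb{P}$-generic over $V$, so the downward direction of the $\kappa$-$\MM$ preservation of $\mathbb{P}$, applied to the full formula $Q^n\varphi$, gives $V\models\mathcal{A}\models Q^n\varphi$, which closes the induction.

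I expect the main obstacle to be bookkeeping rather than a genuine difficulty: one must ensure that the two absoluteness statements being chained concern the same fixed parameters $\vec a$ and the same structure $\mathcal{A}$, and that the quotient $\mathbb{P}/H$ really produces a filter that is $\mathbb{P}$-generic over the ground model $V$ (and not merely over $V[H]$). The point of routing everything through $V$ is exactly that we \emph{never} need to show that the quotient $\mathbb{P}/H$ is itself $\kappa$-$\MM$ preserving, which would be the naive and considerably harder route; instead the hypothesis on $\mathbb{P}$ together with the induction hypothesis on $\mathbb{Q}$ supplies all the absoluteness we need.
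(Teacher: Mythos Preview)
The paper states this lemma without proof, treating it as routine. Your argument is correct and is exactly the expected one: embed any $\mathbb{Q}$-generic extension $V[H]$ into a $\mathbb{P}$-generic extension $V[G]$ via the quotient, and use the induction hypothesis together with the $\kappa$-$\MM$ preservation of $\mathbb{P}$ to route absoluteness through $V$. One cosmetic point: writing ``$G\supseteq H$'' is a slight abuse since $H\subseteq\mathbb{Q}$ and $G\subseteq\mathbb{P}$; what you mean (and correctly describe earlier) is that $\pi$ pushes $G$ forward to a filter generating $H$, so that $V[H]\subseteq V[G]$.
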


A $\kappa$-$\MM$ preserving forcing notion does not collapse $\kappa$. Otherwise, if $|\kappa| = \mu$ in the generic extension, for some $\mu < \kappa$, then the truth value of the $\MM$ formula $Q^1 x,\, x < \mu$ in the model $\langle \kappa, \leq\rangle$ was changed.

For the next theorem we would like to have a forcing notion that collapses cardinals below a Mahlo cardinal and behaves nicely under iterations and elementary embeddings. There are several such forcing notions in the literature (see \cite{Kunen1978}, \cite{ForemanLaver1988}, \cite{eskew2014measurability}, \cite{Shioya2011} and others).

For our results we will use a simple variation of the forcing notion that was defined in \cite{Shioya2011}. The arguments for the properties of this forcing are mostly due to Shioya. 

We would like to thank Eskew for pointing our an error in the previous version of this definition. 

\begin{definition}
Let $\mu < \kappa$ be regular cardinals. Let $\mathbb{S}(\mu, <\kappa)$ be the Silver collapse between $\mu$ and $\kappa$. Namely, the $\mu^{+}$ support product of $\Col(\mu, \alpha)$ for every $\alpha \in [\mu, \kappa)$.  

We denote by $\mathbb{EC}(\mu, <\kappa)$ the Easton support product $\prod_{\mu \leq \alpha < \kappa} \mathbb{S}(\alpha, <\kappa)$, where the product ranges over regular cardinals.

Namely, $\mathbb{EC}(\mu, <\kappa)$ is the set of all partial functions such that:
\begin{enumerate}
\item $\dom(f)\subseteq \{\langle\alpha, \beta, \gamma\rangle\mid \mu \leq \alpha < \kappa, \beta\in[\alpha, \kappa) \text{ regular cardinals, }\gamma < \alpha\}$.
\item $f(\alpha,\beta,\gamma)\in \beta$.
\item $|\{\alpha \mid \exists \langle \alpha,\beta,\gamma\rangle\in \dom(f)\}\cap \rho| < \rho$ for all inaccessible $\rho$.
\item For all $\alpha < \kappa$, $|\{\langle \beta, \gamma\rangle\mid \langle\alpha,\beta,\gamma\rangle\in\dom(f)\}| \leq \alpha$.
\item For all $\alpha < \kappa$, $\beta < \kappa$, $|\{\gamma\mid \langle\alpha,\beta,\gamma\rangle\in\dom(f)\}| < \alpha$.
\end{enumerate}

\end{definition}
\begin{lemma}\label{lem: Knaster collapse}
Let $\mu < \kappa$ be regular cardinals and assume that $\kappa$ is Mahlo. Let $\mathbb{P} = \mathbb{EC}(\mu, <\kappa)$.
\begin{enumerate}
\item $\mathbb{P}$ has precaliber-$\kappa$ and it is $\mu$-closed.
\item $\mathbb{P}$ collapses every cardinal between $\mu$ and $\kappa$.
\end{enumerate}
\end{lemma}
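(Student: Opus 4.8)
The plan is to treat the two clauses separately, disposing of $\mu$-closure and clause (2) quickly and concentrating the effort on precaliber-$\kappa$. For $\mu$-closure, given a decreasing sequence $\langle f_\xi \mid \xi<\delta\rangle$ with $\delta<\mu$, I would take its union $f=\bigcup_{\xi<\delta}f_\xi$ as the candidate lower bound: since the sequence is decreasing the $f_\xi$ are nested, so $f$ is a function, and the defining clauses survive because at every coordinate $\alpha\geq\mu>\delta$ (respectively, below every inaccessible $\rho$) one is taking a union of $\delta<\mu\leq\alpha$ (resp.\ $\delta<\rho$) many sets each of size $<\alpha$ (resp.\ $<\rho$), and $\alpha,\rho$ are regular. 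For precaliber it is cleanest to note first a reduction: it suffices to extract from a family $\{f_i \mid i<\kappa\}$ a subfamily of size $\kappa$ that is \emph{pairwise compatible as partial functions}. Indeed, if finitely many conditions agree on their common domains their union is again a function, and clauses (3) and (4) are inherited automatically by a \emph{finite} union (a finite union of sets of size $<\rho$, resp.\ $<\alpha$, still has size $<\rho$, resp.\ $<\alpha$); such a union is then a common lower bound, which is exactly what the $(\kappa,\kappa,<\omega)$-c.c.\ demands.

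The first and main step of the precaliber argument is to show that every condition has \emph{bounded} support. Writing $s_i=\{\alpha \mid \exists\beta,\gamma\ (\alpha,\beta,\gamma)\in\dom f_i\}$, I claim $|s_i|<\kappa$. If not, then $s_i$ is cofinal in the regular cardinal $\kappa$, so its set of limit points forms a club; as $\kappa$ is Mahlo this club meets the stationarily many inaccessibles, producing an inaccessible $\rho$ with $\sup(s_i\cap\rho)=\rho$, whence $|s_i\cap\rho|=\cf\rho=\rho$, contradicting clause (3). This is the only place Mahloness enters, and it is essential. Since $\kappa$ is regular, $|s_i|<\kappa$ forces $s_i$ to be bounded, and together with clause (4) this gives $|\dom f_i|<\kappa$ and a bound $\delta_i<\kappa$ on all ordinals occurring in $f_i$. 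Now, by regularity of $\kappa$, I would thin to a subfamily of size $\kappa$ on which $|\dom f_i|$ is a constant $\lambda<\kappa$. Because $\kappa$ is a regular strong limit we have $\nu^{\lambda}<\kappa$ for all $\nu<\kappa$, so the $\Delta$-system lemma applies to a $\kappa$-sized family of sets of ordinals of fixed size $\lambda$, yielding $I_0\in[\kappa]^\kappa$ and a root $r$ with $\dom f_i\cap\dom f_j=r$ for distinct $i,j\in I_0$. Finally I fix the behaviour on the root: the second coordinates $\beta$ appearing in the fixed set $r$ are all below some $\beta^\star<\kappa$, and each value $f_i(\alpha,\beta,\gamma)$ lies in $\beta\leq\beta^\star$, so there are at most $(\beta^\star)^{|r|}\leq(\beta^\star)^{\lambda}<\kappa$ possible restrictions $f_i\restriction r$; by pigeonhole one of them is shared by a further subfamily $I\in[\kappa]^\kappa$. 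For distinct $i,j\in I$ the common domain is exactly $r$ and $f_i\restriction r=f_j\restriction r$, so $f_i$ and $f_j$ are compatible, and by the reduction above $\{f_i \mid i\in I\}$ witnesses precaliber-$\kappa$.

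For clause (2) it is enough to exhibit a single \Levi collapse as a factor. Projecting a condition to its $\mu$-th coordinate, $f\mapsto f\restriction(\{\mu\}\times\kappa\times\kappa)$, defines a projection of $\mathbb{P}$ onto $\Col(\mu,<\kappa)$: clause (4) at $\alpha=\mu$ says precisely that such a restriction is a condition of $\Col(\mu,<\kappa)$, and conversely any $\Col(\mu,<\kappa)$-condition extends to a condition of $\mathbb{P}$ supported on the single coordinate $\mu$ (clause (3) being trivial for a one-point support). Hence forcing with $\mathbb{P}$ adds a $\Col(\mu,<\kappa)$-generic, and the Levy collapse surjects $\mu$ onto every ordinal in the interval $(\mu,\kappa)$; thus every cardinal strictly between $\mu$ and $\kappa$ is collapsed.

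The hard part is the support-bounding step, and it is worth stressing why it is unavoidable: a naive $\Delta$-system applied directly to the domains $\dom f_i$ is not available, since at an inaccessible $\kappa$ there exist families of $\kappa$ many $<\kappa$-sized sets of ordinals (already $\{\alpha \mid \alpha<\kappa\}$) possessing no three-element $\Delta$-subsystem. What rescues the argument is precisely that Mahloness bounds each support, forcing $|\dom f_i|<\kappa$ and permitting the reduction to a \emph{fixed} size $\lambda<\kappa$, after which the inaccessible form of the $\Delta$-system lemma does apply; and the auxiliary observation that the collapse targets on the root are bounded by $\beta^\star$ is exactly what keeps the number of root-restrictions below $\kappa$ so that the final pigeonhole succeeds.
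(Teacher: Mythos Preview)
Your treatments of $\mu$-closure and of clause~(2) are correct, as is the reduction of precaliber-$\kappa$ to ``pairwise compatible as partial functions'' and the support-bounding step. The gap is the sentence ``by regularity of $\kappa$, I would thin to a subfamily of size $\kappa$ on which $|\dom f_i|$ is a constant $\lambda<\kappa$.'' Regularity of $\kappa$ gives no such thinning: you have $\kappa$ objects distributed among $\kappa$ possible values (the cardinals below $\kappa$), and pigeonhole fails. Concretely, for each $i<\kappa$ one can build a condition $f_i$ supported on a single regular $\alpha_i$ with $|\dom f_i|$ strictly increasing in $i$; no two have the same domain size, and the family $\{\dom f_i\}$ (being, say, an increasing chain) has no three-element $\Delta$-subsystem. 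So the $\Delta$-system lemma simply does not apply to $\{\dom f_i:i<\kappa\}$ in general, and your argument breaks at exactly this point.

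The paper avoids this by a different mechanism. Having observed (as you do) that each condition is bounded below $\kappa$, it passes to the club $C$ of closure points of $i\mapsto g(i):=\sup\{\alpha,\beta,\gamma,p_i(\alpha,\beta,\gamma)\}$, takes the stationary set of inaccessibles $\rho_i\in C$ (this is where Mahloness is actually used), splits each $p_i$ at $\rho_i$ into a ``high'' part with domain in $[\rho_i,\kappa)^2$ and a ``low'' part $r_i\in V_{\rho_i}$, notes the high parts have pairwise disjoint domains by the closure property of $C$, and applies Fodor to the regressive map $\rho_i\mapsto r_i$ to stabilize the low parts. Your $\Delta$-system route can be repaired, but the natural repair is precisely to first pass to a club of closure points and restrict attention to conditions indexed by inaccessibles there --- at which point the bounds $\delta_i$ become regressive and Fodor does the work, which is the paper's argument rather than a $\Delta$-system argument. (Incidentally, your claim that support-bounding is ``the only place Mahloness enters'' is also off: you use strong-limitness of $\kappa$ for $(\beta^\star)^\lambda<\kappa$ and for the $\Delta$-system hypothesis, and the real use of Mahloness in a correct proof is to get stationarily many inaccessibles inside the relevant club.)
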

\begin{proof}
Let $\{p_i \mid i < \kappa\}$ be a sequence of conditions in $\mathbb{P}$. Let \[g(i) = \sup \{\alpha, \beta, \gamma, p_i(\alpha, \beta, \gamma)\mid \langle\alpha,\beta,\gamma\rangle\in\dom(p_i)\}.\]
For all $i < \kappa$, $g(i) < \kappa$ and therefore there is a club $C\subseteq\kappa$ of cardinals such that $\forall\rho\in C, \sup g^{\prime\prime}(\rho) \leq \rho$. Let $\{\rho_i \mid i < \kappa\}$ be an increasing enumeration of all the strongly inaccessible cardinals in $C$. Note that this is a stationary subset of $\kappa$.

Let $q_i = p_i\restriction [\rho_i, \kappa)\times [\rho_i, \kappa)$ namely the function $p_i$ restricted to inputs of the form $\langle\alpha,\beta,\gamma\rangle$ where $\rho_i \leq \alpha,\beta$. By the definition of $C$, for every $i < j$, $q_i$ and $q_j$ are compatible, since their domain are disjoint - the domain of $q_i$ is a subset of $\rho_{i+1}\times\rho_{i+1}\times\rho_{i+1}$ while the domain of $q_i$ does not contain any triplet of the form $\langle\alpha,\beta,\gamma\rangle$ with $\alpha, \beta < \rho_j$. Similarly, for every finite collection of element $q_{i_0}, \dots, q_{i_{n-1}}$, the union $\bigcup_k q_{i_k}$ is a condition, stronger than all $q_{i_k}$.

Let us look at $r_i = p_i\restriction \rho_i\times \rho_i$. Since $\rho_i$ is strongly inaccessible, $r_i\in V_{\rho_i}$ (its domain is bounded below $\rho_i$). By fixing some enumeration of $V_\kappa$ that maps elements of $V_\rho$ to ordinals below $\rho$ for every inaccessible $\rho$, the function $\rho_i\to r_i$ is equivalent to a regressive function on a stationary set. Therefore, by Fodor's lemma, there is a stationary subset $S$, such that for all $\rho_i, \rho_j\in S$, $r_i = r_j$. We conclude that for every $\rho_i, \rho_j\in S$, $p_i$ is compatible with $p_j$, and furthermore - for every finite collection $p_{i_0}, \dots ,p_{i_{n-1}}$, such that $i_k \in S$, the union $\bigcup p_{i_k}$ is a condition stronger than each $p_{i_k}$.
\end{proof}

\begin{theorem}\label{thm: mm reflection from huge}
Let $\mu < \kappa \leq \lambda < \delta$ be regular cardinals and assume that there is an elementary embedding $j\colon V\to M$ with $j(\kappa) = \delta$, $M^{j(\lambda)} \subseteq M$.

Let $\mathbb{P} = \mathbb{EC}(\mu, <\kappa)$ and let $\dot{\mathbb{Q}}$ be the $\mathbb{P}$-name for the forcing notion $\mathbb{EC}(\lambda, <\delta)$ as defined in $V^{\mathbb{P}}$.

Then $V^{\mathbb{P} \ast \dot{\mathbb{Q}}}\models j(\lambda)\mmchang \lambda$.
\end{theorem}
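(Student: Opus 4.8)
The plan is to force the entire embedding up into the generic extension and then read off the reflecting submodel as a pointwise image. Concretely, I would aim to construct, inside $V^{\mathbb{P}*\dot{\mathbb{Q}}}$, generics $G^*$ for $j(\mathbb{P})$ and $H^*$ for $j(\dot{\mathbb{Q}})$ over $M$, so that $j$ lifts to an elementary $j^+\colon V[G][H]\to M[G^*][H^*]$ with $\crit j^+=\kappa$, $j^+(\lambda)=j(\lambda)$, and---crucially---$M[G^*][H^*]^{j(\lambda)}\subseteq M[G^*][H^*]$ as computed in $V[G][H]$. Granting such a lift, the reflection falls out of a soft argument, so essentially all of the real work sits in producing the lift while maintaining the closure of the target model.

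For the lift I would proceed in two stages. Since $\crit j=\kappa>\mu$ and $\kappa$ is Mahlo (being measurable), every condition of $\mathbb{P}=\mathbb{EC}(\mu,<\kappa)$ is fixed by $j$, whence $j''G=G$; writing $j(\mathbb{P})=\mathbb{EC}(\mu,<\delta)^M$, I would use the universality of the Foreman--Laver collapse to exhibit a projection of $\mathbb{P}*\dot{\mathbb{Q}}$ onto $j(\mathbb{P})$ that is the identity on $\mathbb{P}$, so that an $M$-generic $G^*\supseteq G$ for $j(\mathbb{P})$ can be extracted inside $V[G][H]$; this gives $\hat{j}\colon V[G]\to M[G^*]$. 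In the second stage, $\hat{j}(\mathbb{Q})=\mathbb{EC}(j(\lambda),<j(\delta))$ is $j(\lambda)$-closed in $M[G^*]$ and $|\hat{j}''H|\le\delta\le j(\lambda)$, so I would form the master condition $m=\bigcup\hat{j}''H$ (a genuine condition by the Easton support together with the bound on $|\hat{j}''H|$) and build an $M[G^*]$-generic $H^*\ni m$ inside $V[G][H]$, diagonalizing against the dense sets using the closure of $M$. This yields $j^+\supseteq\hat{j}$.

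With $j^+$ in hand, let $\mathcal{A}$ be any algebra on $j(\lambda)$ in $V[G][H]$ over a countable language. By closure both $\mathcal{A}$ and its pointwise image $\mathcal{A}^*:=j^+{}''\mathcal{A}$ lie in $M[G^*][H^*]$, and $\mathcal{A}^*$ is a genuine substructure of $j^+(\mathcal{A})$, since $j^+(f)(j^+(a_1),\dots,j^+(a_n))=j^+(f(a_1,\dots,a_n))\in\mathcal{A}^*$. I claim $\mathcal{A}^*\prec_{\MM}j^+(\mathcal{A})$ there. The first-order case is immediate. For an $\MM$-formula $\varphi$ with parameters of the necessary form $j^+(\vec p)$, the map $j^+\restriction\mathcal{A}$ is an isomorphism of $\mathcal{A}$ onto $\mathcal{A}^*$, so $\mathcal{A}^*\models Q^n\varphi(j^+(\vec p))$ iff $\mathcal{A}\models Q^n\varphi(\vec p)$; and by elementarity of $j^+$ the latter holds iff $j^+(\mathcal{A})\models Q^n\varphi(j^+(\vec p))$. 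The decisive point is that a $\varphi$-cube of size $j(\lambda)$ in $\mathcal{A}$ is carried by $j^+$ to a $\varphi$-cube of size $j^+(j(\lambda))=|j^+(\mathcal{A})|$, so the apparent width mismatch between $\mathcal{A}^*$ and $j^+(\mathcal{A})$ is exactly absorbed, and the closure hypothesis guarantees that the relevant cubes and their cardinalities are computed identically in $M[G^*][H^*]$ and in $V[G][H]$. Thus $M[G^*][H^*]\models$ \textit{``$j^+(\mathcal{A})$ has a $\MM$-elementary submodel of cardinality $j^+(\lambda)$''}, and since $j^+(\lambda)=j(\lambda)$, elementarity of $j^+$ gives $V[G][H]\models$ \textit{``$\mathcal{A}$ has a $\MM$-elementary submodel of cardinality $\lambda$''}. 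As $\mathcal{A}$ was arbitrary, $j(\lambda)\mmchang\lambda$.

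The main obstacle is entirely in the first part: producing $G^*$ and $H^*$ inside $V[G][H]$ while keeping $M[G^*][H^*]$ closed under $j(\lambda)$-sequences. The two delicate points are the absorption of $j(\mathbb{P})$ by $\mathbb{P}*\dot{\mathbb{Q}}$ (so that $G^*$ is found without a further forcing, and with $G\subseteq G^*$) and the existence of the master condition for $\hat{j}(\mathbb{Q})$. The latter is comfortable when $\kappa<\lambda$---then $\delta<j(\lambda)$ and $j(\lambda)$-closure suffices outright---but requires a closer inspection of the Easton support in the huge case $\kappa=\lambda$, where $\delta=j(\lambda)$ and the naive union $\bigcup\hat{j}''H$ sits exactly at the closure threshold of $\hat{j}(\mathbb{Q})$.
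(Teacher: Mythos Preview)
Your reflection argument at the end is fine, but the plan to construct the lift \emph{internally} in $V[G][H]$ cannot succeed, and this is not merely a technicality. Observe that $j(\mathbb{P})=\mathbb{EC}(\mu,<\delta)^{M}$ contains $\Col(\mu,\kappa)$ as a factor and hence collapses $\kappa$ (and $\lambda$) to $\mu$. On the other hand, $\mathbb{P}\ast\dot{\mathbb{Q}}$ preserves $\kappa$ (by precaliber-$\kappa$ of $\mathbb{P}$ and $\lambda$-closure of $\dot{\mathbb{Q}}$). Now $|j(\mathbb{P})|=\delta\le j(\lambda)$ and $M^{j(\lambda)}\subseteq M$, so every dense subset of $j(\mathbb{P})$ that lies in $V$ already lies in $M$; consequently an $M$-generic $G^{*}$ for $j(\mathbb{P})$ is automatically $V$-generic. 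If such a $G^{*}$ existed in $V[G][H]$ you would have $V[G^{*}]\subseteq V[G][H]$ with $\kappa$ collapsed, contradicting that $\kappa$ is a cardinal in $V[G][H]$. In particular, the projection you invoke---from $\mathbb{P}\ast\dot{\mathbb{Q}}$ onto $j(\mathbb{P})$---does not exist; the universality/absorption goes the other way (from $j(\mathbb{P})$ onto $\mathbb{P}\ast\dot{\mathbb{Q}}$, via the termspace forcing).

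This is exactly why the paper does \emph{not} try to keep the target model $j(\lambda)$-closed. Instead it passes to a further generic extension by the quotient $j(\mathbb{P}\ast\dot{\mathbb{Q}})/(\mathbb{P}\ast\dot{\mathbb{Q}})$, lifts $j$ there, and then shows that this quotient is $j(\lambda)$-$\MM$-\emph{preserving}: it factors as a precaliber-$\delta$ piece (the quotient $j(\mathbb{P})/(\mathbb{P}\ast\dot{\mathbb{Q}})$, handled by a lemma on quotients of precaliber forcings) followed by the $j(\lambda)$-closed $j(\dot{\mathbb{Q}})$. The point is that $\MM$-preservation, not closure, is what lets you pull the statement ``$j''\mathcal{A}\prec_{\MM} j(\mathcal{A})$'' back from the larger extension to $V[G][H]$. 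Your soft argument for $\MM$-elementarity of $j''\mathcal{A}$ survives essentially unchanged once you replace ``closure of $M[G^{*}][H^{*}]$'' by ``$\MM$-preservation of the quotient''; but the machinery of $\MM$-preserving forcings (closed forcings, precaliber forcings, and their quotients) is the missing ingredient.
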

\begin{proof}
By Lemma \ref{lem: Knaster collapse}, $\mathbb{P}$ has precaliber-$\kappa$ and $\dot{\mathbb{Q}}$ is forced to have precaliber-$\delta$ in $V^{\mathbb{P}}$.

Let $\mathbb{N}$ be the termspace forcing for $\dot{\mathbb{Q}}$ and let $\mathbb{R}$ be $\mathbb{EC}(\lambda ,< \delta)^V$.
\begin{lemma} There is a projection from $\mathbb{R}$ onto $\mathbb{N}$.
\end{lemma}
\begin{proof}
Using the fact that $\kappa$ is a huge cardinal and in particular $\delta$-supercompact, for every regular cardinal, $\rho \geq \kappa$, $\rho^{<\kappa} = \rho$. In particular, we can construct a bijection between all the nice $\mathbb{P}$-names of ordinals below a regular $\rho$ and $\rho$ (using the fact that $\mathbb{P}$ is $\kappa$-c.c.). Using this bijection we can identify a condition in $\mathbb{R}$ as a partial function to names of ordinals in the appropriate domain and get a projection. 

This projection is continuous in the following sense: if $A \subseteq \mathbb{R}$ is a collection of conditions and $A$ has a lower bound then (by the properties of $\mathbb{R}$) it has an unique greatest lower bound, $\bigcup A$, and the projection sends $\bigcup A$ to the unique lower bound of the image of $A$.
\end{proof}

By elementarity, $j(\mathbb{P} \ast \dot{\mathbb{Q}}) = j(\mathbb{P})\ast j(\dot{\mathbb{Q}})$. We want to show that one can find a weak master condition for this forcing.

$j(\mathbb{P}) = \prod_{\mu\leq \alpha < \delta}\mathbb{S}(\alpha, < \delta)$, with Easton support. Let us decompose $j(\mathbb{P})$ in the following way: 

\[j(\mathbb{P}) = \big(\prod_{\mu \leq \alpha < \kappa} \mathbb{S}(\alpha, < \delta)\big) \times \big(\prod_{\kappa \leq \alpha < \lambda} \mathbb{S}(\alpha, < \delta)\big) \times \big(\prod_{\lambda \leq \alpha < \delta} \mathbb{S}(\alpha, < \delta)\big)\]
were all products are with Easton support. The first component projects onto $\mathbb{P}$, by taking the projection of each component $\mathbb{S}(\alpha, <\delta)$ and restrict it to its first $\kappa$ coordinates. The last coordinate is $\mathbb{R}$.

Thus, there is a projection from $j(\mathbb{P})$ to $\mathbb{P} \times \mathbb{R}$ and in particular to $\mathbb{P} \ast \dot{\mathbb{Q}}$. This projection respects greatest lower bounds. Therefore, after forcing with $j(\mathbb{P})$ we have a generic filter $G\subseteq \mathbb{P}$ and a generic filter $H \subseteq \dot{\mathbb{Q}}$. The set $\tilde{q} = \bigcup_{q\in H} j(q)$ is a condition: its domain is bounded by $\sup j\image \delta < j(\delta)$, and therefore it is Easton. For every inaccessible $\alpha < \delta$, and every $\beta < j(\alpha)$, $\tilde{q}(\beta)$ is the union of at most $\alpha$ many functions, and thus it is a condition in the relevant Silver collapse. Finally, the support of $\tilde{q}$ in the product $\mathbb{S}(\rho, <j(\delta))$ is at most $\delta \cdot \rho \leq \rho$ for all $\rho \geq \delta$\footnote{This is the only place in which using Silver collapse (instead of the more standard Levy collapse) is important. In the original version of this paper, the Levy collapse forcing was used and this argument was flawed. We would like to thank Eskew for pointing out this mistake.}.  

Therefore, by using the directed closure of the forcing, $\tilde{q}$ is a condition.

Using Silver criteria for extending elementary embeddings to generic extension, one can extend $j$ by forcing with $j(\mathbb{P})\ast j(\dot{\mathbb{Q}})/(\mathbb{P} \ast \dot{\mathbb{Q}})$. $j(\dot{\mathbb{Q}})$ is a $j(\mathbb{P})$-name for a highly directed closed (at least $\delta$-closed). We claim that  $j(\mathbb{P})\ast j(\dot{\mathbb{Q}})/(\mathbb{P} \ast \dot{\mathbb{Q}})$ is $j(\lambda)$-$\MM$-preserving.

By the structure of the projection - we can split the discussion into two parts. First, note that $j(\dot{\mathbb{Q}})$ is forced to be $j(\lambda)$-closed in $V^{j(\mathbb{P})}$, and therefore $j(\lambda)$-$\MM$-preserving. $j(\mathbb{P})/ (\mathbb{P} \ast \dot{\mathbb{Q}})$ is the quotient of two forcing notions of precalibre-$\delta$. If $j(\lambda) > \delta$, then since the forcing notion $j(\mathbb{P})/ (\mathbb{P} \ast \dot{\mathbb{Q}})$ has cardinality $<j(\lambda)$, it automatically has precaliber $j(\lambda)$. Otherwise, we use the following claim:
\begin{lemma}
Let $\mathbb{R}$ and $\mathbb{S}$ be two precaliber $\delta$ forcing notions. Let us assume that for every finite collection of conditions (either in $\mathbb{R}$ or in $\mathbb{S}$), if it has a lower bound then it has an unique greatest lower bound. Let $\pi \colon \mathbb{R} \to \mathbb{S}$ be a projection that respects greatest lower bounds of finite collections. 

Then the quotient forcing has precaliber $\delta$.
\end{lemma}
\begin{proof}
Let $\dot{K}$ be the canonical name for the generic filter for $\mathbb{S}$. Let $\dot{I}$ be a name for a subset of conditions in $\mathbb{R} / \dot{K}$ of size $\delta$ that has no subset of cardinality $\delta$ in which every finitely many conditions are compatible.

Let us pick conditions $s_i \in \mathbb{S}$ such that $s_i \Vdash \check{r}_i \in \dot{I}$, $r_i$ are all distinct and $i  <\delta$. Note that in particular, $s_i \leq \pi(r_i)$. Since $\mathbb{S}$ has precaliber-$\delta$, there is a subset of $\delta$ of cardinality $\delta$, $J$, such that any finitely many conditions $s_{\xi_0}, \dots, s_{\xi_{n-1}}$, $\xi_0, \dots, \xi_{n-1}\in J$, are compatible. Since $\mathbb{R}$ has precaliber-$\delta$, there is a set $J^\prime \subseteq J$ of cardinality $\delta$ such that for every $\xi_0, \dots, \xi_{n-1}\in J^\prime$, $r_{\xi_0}, \dots, r_{\xi_{n-1}}$ are compatible. Note that is $s$ is the greatest lower bound of $s_{\xi_0}, \dots, s_{\xi_{n-1}}$ and $r$ is the greatest lower bound of $r_{\xi_0}, \dots, r_{\xi_{n-1}}$ then $s \leq \pi(r)$ and therefore $s\Vdash \check{r} \in \mathbb{R} / \dot{K}$.

Let us show that there is a condition $s\in \mathbb{S}$ that forces $\{\xi \in J^\prime \mid s_{\xi}\in \dot{K}\}$ is unbounded. Otherwise, by the chain condition of $\mathbb{S}$, there was a bound that was forced by the weakest condition of $\mathbb{S}$, $\beta$. But for $\xi > \beta$, $\xi\in J^\prime$, $s_\xi$ provides the contradiction.
\end{proof}

Let $\mathcal{A}$ be an algebra on $j(\lambda)$. Let us extend $j$ to an embedding $\tilde{j}$ by forcing with $j(\mathbb{P} \ast \dot{\mathbb{Q}})/\big(\mathbb{P} \ast \dot{\mathbb{Q}}\big)$. In $M[j(G\ast H)]$, $\tilde{j}^{\prime\prime} \mathcal{A}$ is an elementary substructure of $j(\mathcal{A})$ of cardinality $j(\lambda)$. We want to show that it is $\MM$-elementary.

Since the forcing $j(\mathbb{P} \ast \dot{\mathbb{Q}})/\big(\mathbb{P} \ast \dot{\mathbb{Q}}\big)$ is $j(\lambda)$-$\MM$ preserving, every $\MM$ formula that holds in $j^{\prime\prime}\mathcal{A}$ (and hence in $\mathcal{A}$) in $M[j(G)][j(H)]$, holds in $\mathcal{A}$ in $V[G][H]$ as well. Therefore, it holds in $j(\mathcal{A})$ - as wanted. \end{proof}

It is interesting to check where exactly the proof fails when using Levy collapse instead of $\mathbb{EC}$. Indeed, the only point in which we use a property of $\mathbb{EC}$ which fails for Levy collapse is the existence of a projection from $\mathbb{EC}(\mu, <\delta)$ to $\mathbb{EC}(\kappa, <\delta)$. While there is such projection, there is no projection from $\Col(\mu, <\delta)$ to $\Col(\kappa, <\delta)$ for $\kappa > \mu$ and $\cf \delta \geq \mu^+$.

Taking $\lambda = \kappa$ and $\mu$ regular we obtain $\mu^{++}\mmchang\mu^+$. The proof shows that the result holds also for languages of cardinality $<\kappa$, so we can write $\mu^{++}\xrightarrowdbl[\MM]{\mu}\mu^+$. Assuming that \GCH holds in the ground model, it also holds in the generic extension.
\begin{corollary}
It is consistent, relative to a huge cardinal, that $\aleph_3\mmchang\aleph_2$ and \GCH holds.
\end{corollary}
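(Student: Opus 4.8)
The plan is to read off the corollary from Theorem~\ref{thm: mm reflection from huge} by making one concrete choice of parameters, computing the resulting cardinal arithmetic, and checking that \GCH survives. The heavy lifting has already been done in Theorem~\ref{thm: mm reflection from huge}; what remains is bookkeeping, namely arranging the ground model and identifying the collapsed cardinals as $\aleph_2$ and $\aleph_3$.

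First I would fix the ground model. A huge cardinal is precisely an elementary embedding $j\colon V\to M$ with $\crit j=\kappa$, $j(\kappa)=\delta$ and $M^{\delta}\subseteq M$; note that $\delta=j(\kappa)$ is then regular (indeed inaccessible) in $V$, since $M$ is closed under $\delta$-sequences and $\delta$ is inaccessible in $M$. Because we only seek a consistency result relative to a huge cardinal, I would first force \GCH while preserving hugeness: the standard reverse Easton iteration making $2^{\alpha}=\alpha^{+}$ for every $\alpha$ lifts the huge embedding by the usual master-condition argument, so I may assume outright that \GCH holds in $V$ together with a huge cardinal $\kappa$ witnessed by $j\colon V\to M$ as above.

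Next I would invoke Theorem~\ref{thm: mm reflection from huge} with $\mu=\aleph_{1}$, $\lambda=\kappa$ and $\delta=j(\kappa)$. The hypotheses hold: $\aleph_{1}<\kappa\le\kappa<\delta$ are all regular, and $M^{j(\lambda)}=M^{\delta}\subseteq M$. Writing $\mathbb{P}=\mathbb{EC}(\aleph_{1},<\kappa)$ and letting $\dot{\mathbb{Q}}$ be the $\mathbb{P}$-name for $\mathbb{EC}(\kappa,<\delta)$ computed in $V^{\mathbb{P}}$, the theorem yields $V^{\mathbb{P}\ast\dot{\mathbb{Q}}}\models\delta\mmchang\kappa$; this is exactly the case $\lambda=\kappa$, $\mu=\aleph_1$ recorded in the discussion following that theorem, which gives $\mu^{++}\mmchang\mu^{+}$. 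It then remains only to identify $\kappa$ and $\delta$ in the extension. By Lemma~\ref{lem: Knaster collapse}, $\mathbb{P}$ is $\aleph_{1}$-closed and collapses every cardinal in $(\aleph_{1},\kappa)$, so $\kappa=\aleph_{2}$ in $V^{\mathbb{P}}$; and $\dot{\mathbb{Q}}$, being $\kappa$-closed and collapsing every cardinal in $(\kappa,\delta)$, preserves both $\aleph_{1}$ and $\kappa=\aleph_{2}$ and turns $\delta$ into $\aleph_{3}$ in $V^{\mathbb{P}\ast\dot{\mathbb{Q}}}$. Hence $\delta\mmchang\kappa$ reads $\aleph_{3}\mmchang\aleph_{2}$ there.

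Finally, I would verify that \GCH is preserved, as already asserted in the remark after Theorem~\ref{thm: mm reflection from huge}: each factor has the appropriate degree of closure and chain condition together with cardinality bounded by the successor it produces, so the usual nice-name count against \GCH in the ground model delivers $2^{\alpha}=\alpha^{+}$ for all $\alpha$ in the extension. The corollary is thus essentially an application of Theorem~\ref{thm: mm reflection from huge}, and I do not expect a genuine obstacle; the only two points requiring explicit care are (i) the preliminary forcing of \GCH with preservation of the huge embedding, which is routine but should be stated, and (ii) the cardinal computation confirming that the two Easton--Levy products send $\kappa$ and $\delta$ to $\aleph_{2}$ and $\aleph_{3}$ while fixing $\aleph_{1}$.
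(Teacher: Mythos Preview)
Your proposal is correct and follows the same approach as the paper: the corollary is obtained by applying Theorem~\ref{thm: mm reflection from huge} with $\lambda=\kappa$ and $\mu=\aleph_1$, noting that \GCH in the ground model survives the two Easton collapses. You supply more bookkeeping detail (the preliminary \GCH forcing, the explicit cardinal identifications) than the paper's one-line derivation, but the argument is the same.
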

Using $\lambda=\kappa^{+\omega+1}$ in order to obtain a gap we can get:
\begin{corollary}
It is consistent, relative to a 2-huge cardinal, that $\aleph_{\omega\cdot 2 + 1}\mmchang \aleph_{\omega+1}$.
\end{corollary}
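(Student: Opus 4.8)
The plan is to apply Theorem~\ref{thm: mm reflection from huge} directly, with the parameters tuned so that the two cardinals $\lambda$ and $j(\lambda)$ land on $\aleph_{\omega+1}$ and $\aleph_{\omega\cdot 2+1}$ in the generic extension. First I would fix a $2$-huge cardinal $\kappa$, witnessed by an elementary embedding $j\colon V\to M$ with $\crit j = \kappa$, $j(\kappa) = \delta$, and $M^{j(\delta)}\subseteq M$ (recall that $j(\delta) = j^2(\kappa)$). I would set $\mu = \omega$, $\lambda = \kappa^{+\omega+1}$, and keep $\delta = j(\kappa)$ as the top cardinal, so that $\mathbb{P} = \mathbb{EC}(\mu,<\kappa)$ and $\dot{\mathbb{Q}}$ is the $\mathbb{P}$-name for $\mathbb{EC}(\lambda,<\delta)$ exactly as in the theorem.

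Next I would verify the hypotheses $\mu < \kappa \leq \lambda < \delta$ (all regular) together with the closure condition $M^{j(\lambda)}\subseteq M$. Regularity is immediate: $\omega$ and $\kappa$ are regular, $\lambda = \kappa^{+\omega+1}$ is a successor cardinal, and $\delta = j(\kappa)$ is measurable in $M$ by elementarity and hence (as $M^{\delta}\subseteq M$) measurable in $V$, so regular and inaccessible; consequently $\lambda = \kappa^{+\omega+1} < \delta$. The key point is the closure. By elementarity, and since $M$ computes successor cardinals correctly below $j(\delta)$, we have $j(\lambda) = j(\kappa)^{+\omega+1} = \delta^{+\omega+1}$, which is strictly below $j(\delta) = j^2(\kappa)$. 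Therefore $M^{j(\lambda)}\subseteq M$ follows from the $2$-huge closure $M^{j(\delta)}\subseteq M$. This is precisely the step where ordinary hugeness (which yields only $M^{\delta}\subseteq M$) is insufficient, and $2$-hugeness is needed to span the extra $\omega+1$ successors.

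With the hypotheses in hand, Theorem~\ref{thm: mm reflection from huge} gives $V^{\mathbb{P}\ast\dot{\mathbb{Q}}}\models j(\lambda)\mmchang\lambda$, and it remains to identify $\lambda$ and $j(\lambda)$ in the extension using Lemma~\ref{lem: Knaster collapse}. Since $\mathbb{P}$ collapses the interval $(\omega,\kappa)$ while being $\kappa$-c.c., in $V^{\mathbb{P}}$ we have $\kappa = \aleph_1$, so $\kappa^{+n} = \aleph_{n+1}$, $\kappa^{+\omega} = \aleph_\omega$, and $\lambda = \kappa^{+\omega+1} = \aleph_{\omega+1}$; as $\dot{\mathbb{Q}}$ is $\lambda$-closed it leaves these untouched, so $\lambda = \aleph_{\omega+1}$ in the final model. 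Likewise $\dot{\mathbb{Q}}$ collapses $(\lambda,\delta)$ while being $\delta$-c.c., making $\delta = \lambda^+ = \aleph_{\omega+2}$; preservation of cardinals $\geq\delta$ then yields $\delta^{+n} = \aleph_{\omega+2+n}$, $\delta^{+\omega} = \aleph_{\omega\cdot 2}$, and finally $j(\lambda) = \delta^{+\omega+1} = \aleph_{\omega\cdot 2+1}$.

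The main obstacle is the bookkeeping of the last paragraph: one must track exactly which cardinals $\mathbb{P}$ and $\dot{\mathbb{Q}}$ collapse and which they preserve, and confirm that the successor structure above $\kappa$ and above $\delta$ survives the two-step forcing, so that $\kappa^{+\omega+1}$ and $\delta^{+\omega+1}$ really become $\aleph_{\omega+1}$ and $\aleph_{\omega\cdot 2+1}$. The arithmetic is routine once the chain-condition and closure properties from Lemma~\ref{lem: Knaster collapse} are invoked, but some care is needed at the limit stages $\kappa^{+\omega}$ and $\delta^{+\omega}$, and in checking that $\delta$ retains enough large-cardinal structure in $V^{\mathbb{P}}$ for $\dot{\mathbb{Q}}$ to behave as required.
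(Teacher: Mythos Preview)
Your proposal is correct and follows exactly the route the paper takes: the paper's entire argument is the single line ``Using $\lambda=\kappa^{+\omega+1}$ in order to obtain a gap,'' which you have unpacked in detail. Your choice of $\mu=\omega$, the verification that $j(\lambda)=\delta^{+\omega+1}<j(\delta)$ so that $2$-hugeness supplies the needed closure $M^{j(\lambda)}\subseteq M$, and the cardinal arithmetic identifying $\lambda$ and $j(\lambda)$ with $\aleph_{\omega+1}$ and $\aleph_{\omega\cdot 2+1}$ in the extension are all sound and constitute precisely the details the paper leaves implicit.
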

\subsection{MM reflection to \texorpdfstring{$\aleph_1$}{aleph1}}\label{subsec: mm to aleph1 successor to regular}
In the subsection we will show how to derive instances of $\MM$-reflection from some cardinal to $\aleph_1$ using a sufficiently good ideal. For a survey about ideals and their connection to large cardinals, see \cite{Foreman2010ideals}. In particular we will obtain the consistency of $\aleph_2\mmchang\aleph_1$ from a measurable cardinal.

In an unpublished work from 1978, Shelah showed that the instance of Magidor-Malitz reflection, $\aleph_2\mmchang\aleph_1$, is consistent relative to a Ramsey cardinal. Our proof gives weaker consistency result, but it shows an implication between the existence of sufficiently good generically large cardinal and $\MM$-reflection.

Let $\mathcal{I}$ be an ideal on $\kappa$ such that:
\begin{enumerate}
\item $\{\alpha\}\in \mathcal{I}$ for all $\alpha < \kappa$.
\item $\mathcal{I}$ is $\kappa$-complete.
\item The forcing that adds a generic ultrafilter to $\mathcal{P}(\kappa)/\mathcal{I}$ is $\omega+1$-strategically closed.
\end{enumerate}

\begin{theorem}
Assume that there is ideal $\mathcal{I}$ as above and $\diamondsuit(\omega_1)$. Then $\kappa\mmchang \omega_1$.
\end{theorem}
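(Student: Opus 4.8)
The plan is to run the generic ultrapower coming from $\mathcal{I}$ as the source of ``largeness'', and to let the Magidor--Malitz model-existence theorem under $\diamondsuit(\omega_1)$ assemble the actual $\aleph_1$-sized model. First I would reduce to a convenient form of $\mathcal{A}$. Given an algebra on $\kappa$ over a countable language, expand it --- keeping the language countable --- by Skolem functions, a truth predicate, a definable well-ordering of $\kappa$, and, for every $\MM$-formula $\varphi = Q^n x_0,\dots,x_{n-1}\,\psi(\bar x,\bar y)$, a witnessing function $F_\varphi$ as in the proof of Theorem~\ref{thm: MM elementainess from pi11-subcompact}. Any substructure $\mathcal{B}$ closed under these functions is first-order elementary and already satisfies the easy half of $\MM$-elementarity, namely $\mathcal{A}\models\varphi(\bar b)\Rightarrow\mathcal{B}\models\varphi(\bar b)$, because $x\mapsto F_\varphi(x,\bar b)$ maps $\mathcal{B}$ injectively into a $\psi$-cube of size $|\mathcal{B}|$. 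So the whole problem reduces to producing a substructure of size $\aleph_1$, closed under these functions, for which the reverse implication holds: every $\psi$-cube of size $\aleph_1$ living inside $\mathcal{B}$ must extend to a $\psi$-cube of size $\kappa$ in $\mathcal{A}$.

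Next I would set up the generic ultrapower. Forcing with $\mathcal{P}(\kappa)/\mathcal{I}$ adds a $V$-$\kappa$-complete generic ultrafilter $U$; since the quotient is $(\omega+1)$-strategically closed, the ideal is precipitous and $j\colon V\to M=\Ult(V,U)$ has a transitive target, with $\crit j=\kappa$ and $j\restriction\kappa=\mathrm{id}$. The same closure adds no new reals or $\omega$-sequences, so $\diamondsuit(\omega_1)$ and all countable structure are preserved between $V$ and $V[G]$, which is what keeps the Magidor--Malitz apparatus available. The crucial point is that $\kappa=[\mathrm{id}]_U$ is a \emph{generic element} of $j(\mathcal{A})$: for any parameters $\beta_0,\dots<\kappa$ (all fixed by $j$) one has $j(\mathcal{A})\models\psi(\dots,\kappa,\dots,\bar b)$ iff $\{\alpha<\kappa : \mathcal{A}\models\psi(\dots,\alpha,\dots,\bar b)\}\in U$, whereas every subset of $\kappa$ of size $<\kappa$ lies in $\mathcal{I}$ (by $\kappa$-completeness and the presence of singletons) and is thus avoided by $U$. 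This dichotomy already settles the upward implication for $Q^1$: a set of $\aleph_1$ witnesses inside $\mathcal{B}$ cannot come from an $\mathcal{A}$-set of size $<\kappa$, provided the construction arranges that $\mathcal{B}$ meets every member of $\mathcal{I}$ --- in particular $\omega_1$ itself --- in only a countable set, so the witnessing set must be $\mathcal{I}$-positive, hence of size $\kappa$.

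To build the model I would run an $\omega_1$-recursion driven by $\diamondsuit(\omega_1)$, producing an increasing continuous chain $\langle \mathcal{B}_\xi : \xi<\omega_1\rangle$ of countable elementary submodels closed under the functions, with $\mathcal{B}=\bigcup_{\xi<\omega_1}\mathcal{B}_\xi$; here I would rely on the Magidor--Malitz completeness theorem, which is exactly the tool that, under $\diamondsuit(\omega_1)$, turns a consistent set of $Q^n$-requirements into an honest $\aleph_1$-model whose cubes are correct. The diamond sequence is used to anticipate, at club-many $\xi$, a candidate $\aleph_1$-cube together with the formula $\psi$ and parameters $\bar b\in\mathcal{B}_\xi$ it is meant to witness (via the countable approximation seen at stage $\xi$), and to bar the non-reflecting configurations by keeping the $\mathcal{I}$-small definable sets from accumulating. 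The role of the ultrapower is to certify that these requirements are jointly satisfiable: because $\kappa$ is a generic point of $j(\mathcal{A})$ and small sets are $\mathcal{I}$-null, a guessed $\psi$-cube is either genuinely extendible in $\mathcal{A}$ (its extension constraints being $\mathcal{I}$-positive) or else is blocked already at a countable stage. Pulling this back along $j$ --- the parameters being below $\kappa$ and hence fixed --- converts $j(\mathcal{A})\models Q^n\psi(\bar b)$ into $\mathcal{A}\models Q^n\psi(\bar b)$, which is the upward implication.

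The hard part will be the upward implication for the higher quantifiers $Q^n$ with $n\ge 2$. Extending a cube by a new element imposes not one but an intersection of constraints indexed by the $<n$-tuples already chosen, and $\mathcal{I}$-positivity is not preserved under intersection; the generic point $\kappa$ lets one adjoin a single new coordinate (the $<\kappa$ many constraints from an $\aleph_1$-cube intersect inside the $\kappa$-complete $U$), but threading these single steps into a global cube of full size is precisely what must be organized by the $\diamondsuit(\omega_1)$-guided Magidor--Malitz construction, using the $(\omega+1)$-strategic closure to fuse the $\omega$ many commitments that seal each coordinate. Getting the abstract Magidor--Malitz notion of consistency to line up exactly with the positivity data supplied by the ideal, uniformly across all $\aleph_1$ candidate cubes at once, and checking that the recursion can always be continued (equivalently, that no diamond-guessed cube contradicts $\mathcal{A}\models\neg Q^n\psi$), is where the precipitousness and the precise closure hypothesis are spent.
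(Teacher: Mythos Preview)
Your outline assembles the right ingredients --- the generic point supplied by the ideal, the $\diamondsuit(\omega_1)$-guided $\omega_1$-recursion, the easy downward direction via witness functions --- and correctly isolates the upward direction for $Q^n$, $n\ge 2$, as the crux. But the argument has a real gap at exactly that point: you invoke the Magidor--Malitz completeness theorem as a black box, and that theorem builds an abstract $\aleph_1$-model of a consistent $\mathcal{L}(\MM)$-theory, not an elementary \emph{submodel} of a prescribed structure $\mathcal{A}$. There is no off-the-shelf statement that converts ``$\mathcal{I}$-positivity data'' into ``a consistent set of $Q^n$-requirements'' so that the resulting model sits inside $\mathcal{A}$; that translation is precisely the content of the proof, and you have not supplied it.

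What the paper does instead is adapt the \emph{technique} of the completeness proof rather than quote its statement. The chain $\langle M_\alpha\rangle$ is built inside $V$: at each successor step one finds an honest ordinal $\zeta<\kappa$ generic over the countable $M_\alpha$ for $\mathcal{P}(\kappa)/\mathcal{I}$ (here the $(\omega{+}1)$-strategic closure is used, to meet countably many dense sets while staying $\mathcal{I}$-positive), and sets $M_{\alpha+1}=\{f(\zeta):f\in M_\alpha\}$. The key device you are missing is an explicit \emph{derivative} operator on types, $\partial_A\Phi$, which records what must be realized in $M_\alpha$ for $\Phi$ to be realizable in $M_{\alpha+1}$ no matter which generic $\zeta\in A$ is chosen; this iterates to $\partial_{A_0}\partial_{\dot A_1}\cdots\partial_{\dot A_{m-1}}\Phi$. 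The heart of the argument is a lemma: if $Z\subseteq M_\alpha$ is a \emph{maximal} $\varphi$-cube and $V\models\neg Q^k\varphi$, then \emph{every} iterated derivative of the type ``$x$ extends $Z$ to a larger $\varphi$-cube'' is omitted in $M_\alpha$. The proof of this lemma is where your intuition about ``threading single steps into a global cube'' is made precise: one unwinds $k$ layers of the derivative to produce, inside $M_\alpha$, a ``solid'' set that is forced to have size $\kappa$ and is a $\varphi$-cube, contradicting $\neg Q^k\varphi$. The diamond then guesses, at club-many $\alpha$, a maximal cube $Z$ and the formula $\varphi$; since all derivatives of the associated type are omitted, one can choose the generic $\zeta$ so that the type itself stays omitted at every later stage, and hence no $\varphi$-cube in $M_{\omega_1}$ can have size $\aleph_1$ unless $Q^k\varphi$ already held in $V$. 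Your proposal gestures at this with ``blocked already at a countable stage'', but without the derivative machinery and the maximal-cube lemma there is no argument that the blocking can actually be arranged.
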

\begin{proof}
The proof follows closely the proof of the completeness theorem for the logic $\mathcal{L}(\MM)$ - the first order logic extended by the Magidor-Malitz quantifier. This result requires $\diamondsuit(\omega_1)$ as well. See \cite[Section 7.3]{Hodges85}. The proof also resemble the proofs in  \cite{farah2006absoluteness}. In this paper, similar methods are used in order to construct models of size $\aleph_1$ that witness the completeness of extensions of $\mathcal{L}(\MM)$ (some of them under large cardinal assumptions).

We start with a countable model $M_0$, and repeatedly add new elements to it. At each step we essentially enlarge $M_\alpha$ by adding some ordinal $\zeta < \kappa$ which is generic over $M_\alpha$ for the forcing $\mathcal{P}(\kappa)/\mathcal{I}$ (i.e. $\{A\in M_\alpha\mid \zeta \in A\}$ is an $M_\alpha$-generic ultrafilter). Eventually, we will show that we can arrange the limit model $M_{\omega_1}$ to be a $\MM$  elementary submodel of some elementary substructure of $H(\chi)$ ($\chi$ large enough regular) of cardinality $\kappa$ that contains all ordinals below $\kappa$. Note that this is the general case, as for any algebra $\mathcal{A}$ on $\kappa$, we may assume that $\mathcal{A}\in M_0$.

Throughout the rest of the proof, $\chi$ is a regular cardinal above $2^{2^{\kappa}}$.
\begin{lemma}
Let $M$ be a countable elementary substructure of $H(\chi)$. There is $\zeta < \kappa$ such that $\{A\in M\mid \zeta\in A\}$ is $M$-generic for the forcing $(\mathcal{P}(\kappa)/\mathcal{I})^M$. Moreover, $M^{\star} := \{f(\zeta) \mid f\colon \kappa \to V,\,f\in M\}$ is a proper extension of $M$ and $M^\star\prec H(\chi)$.
\end{lemma}
\begin{proof}
Let $\{I_n \mid n < \omega\}$ list all the maximal antichains of the forcing $(\mathcal{P}(\kappa)/\mathcal{I})^M$ in $M$. Since the forcing, consists of the $\mathcal{I}$-positive sets is $\sigma$-strategically closed, there is a sequence of $\mathcal{I}$-positive sets $\langle A_n \mid n < \omega\rangle$ such that $A_{n+1}\subseteq A_n$, $A_n\in I_n$  and $\bigcap_{n<\omega} A_n\notin \mathcal{I}$. Any $\zeta\in \bigcap A_n$ will generate a $M$-generic filter.

Since for every $x\in M$ the constant function $c_x(\alpha) = x$ is in $M$, $M\subseteq M^\star$. Since the identity function $id(\alpha) = \alpha$ is in $M$, $\zeta\in M^\star$ so $M^\star$ is strictly larger than $M$.

In order to show that $M^\star\prec H(\chi)$ we use Tarski–Vaught criterion. Let $\varphi(x, b)$ be a formula with $b \in M^\star$, and assume $H(\chi)\models \exists x \varphi(x, b)$. We need to show that $M^\star \models \exists x \varphi(x, b)$. $b = g(\zeta)$ for some $g\in M$. Let \[B = \{\alpha < \kappa \mid H(\chi)\models \exists x \varphi(x, g(\alpha))\}.\]
$B$ is definable from parameters in $M$ and therefore it is a member of $M$. $B\notin \mathcal{I}$, since otherwise, we would have that $\zeta\notin B$. Thus, applying the axiom of choice inside of $M$, there is a function $f\in M$ that assign to every element $\alpha \in B$ a witness $f(\alpha)$ such that $\varphi(f(\alpha), g(\alpha))$. In $M$,
\[B = \{\alpha < \kappa \mid \varphi(f(\alpha), g(\alpha))\}\]
and by elementarity, the same holds in $V$. Since $\zeta\in B$, $\varphi(f(\zeta), g(\zeta))$, so $f(\zeta)$ witnesses $M^\star\models \exists x \varphi(x, b)$.
\end{proof}

Let us define a sequence of models. $M_0 = M$, $M_{\alpha + 1} = M_{\alpha}^\star$ (we will define the $M_\alpha$-generic filters more explicitly in the course of the proof). For limit ordinal $\beta \leq \omega_1$, $M_\beta = \bigcup_{\alpha<\beta} M_\alpha$.

We would like to get that $M_{\omega_1}$ witnesses an instance of $\kappa\mmchang\omega_1$. In order to achieve this, during the iteration we will pick the generic elements in a way that will handle any potential counterexample for the reflection $M_{\omega_1} \cap \kappa \prec_{\MM} H(\chi)\cap \kappa$.

Let $\phi(x)$ be a formula (with parameters from $M$) and let $A$ be an $\mathcal{I}$-positive set. we define the following formula in the language of set theory:
\[\partial_A \phi(w) := ``w\text{ is a function from }\kappa\text{ and } \{\alpha\in A \mid \neg\phi(w(\alpha))\} \in \mathcal{I}''.\]

For a type $\Phi(x)$ with parameters in $M$, (not necessary in $M$), we define \[\partial_A \Phi(w) = \{\partial_A \phi \mid \phi(x)\in\Phi(x)\}.\]

These types control which types will be omitted in the next step of the construction. If $w$ realizes $\partial_A\Phi$ in $M$, then for every choice of $\zeta\in A$, except an $\mathcal{I}$-null set, $\Phi$ is realized in $M^\star$. On the other hand, if for every $\zeta\in A$, $\Phi$ is realized in $M^\star$ (where it is defined using $\zeta$) then there is some positive set $B\subseteq A$ such that $\partial_B\Phi$ is realized. Otherwise, we could remove, outside of $M$, an $\mathcal{I}$-null set and verify that this is not the case. This process cannot be done inside of $M$, since in general $\Phi\notin M$.

One can repeat this process countably many times (using the strategic closure of the forcing) and verify that for a countable set of types $\{\Phi_n(x)\mid n < \omega\}$ if $\partial_A\Phi_n$ is omitted in $M$ for all $n<\omega$ and $A\in M$ then $\Phi_n(x)$ is omitted in $M^\star$.

In $M$, there are names for a positive sets in $M^\star$. Those are essentially the functions $f\colon \kappa \to \mathcal{I}^+$ that appear in $M$. One can define, for a given formula $\varphi$, a positive set $A$ and a name of a positive set $\dot B$ the formula $\partial_A \partial_{\dot B} \varphi$, in the natural way:

$\partial_A \partial_{\dot B} \varphi(w)$ := ``$w$ is a function with domain $\kappa^2$ and $\{\alpha \in A \mid \neg \partial_{\dot{B}(\alpha)}(w_\alpha)\} \in \mathcal{I}$'', where $w_\alpha(x) = w(\alpha, x)$.

We can continue this way and define the derivative of a type relative to any finite sequence of names of positive sets in the iterated forcing (in the narrow sense: the $m$-th set $\dot B$ is a function from $\kappa^m$ to the positive sets).

Let us enrich the language of set theory by all the members of $M$ (as constants). For simplicity of notations, we will use the fact that $M$ is closed under pairs and we will not distinguish between formulas which are provably equivalent.
\begin{lemma}\label{lem: type of maximal block}
Let $\varphi$ be a formula with $k$ free variables. Let $Z\subseteq M$ be a maximal $\varphi$-cube. Let $\Phi$ be the type \[\{\psi(x, p) \mid p \in M, \forall a\in Z,\,\psi(a, p)\} \bigcup \{x \neq a\mid a\in Z\}.\] If $V \models \neg Q^k \varphi$ then $M$ omits all the derivatives of $\Phi$.
\end{lemma}
\begin{proof}
$\Phi$ contains the formulas $\varphi(a_0, \dots, a_{k-2}, x)$ for all $a_i\in Z$ and therefore $M$ does not realize $\Phi$, by the maximality of $Z$.

Let us denote by $\forall^\star \alpha \varphi(\alpha)$ the assertion that $\{\alpha \mid \neg \varphi(\alpha)\}\in \mathcal{I}$.

Assume that $M$ realizes $\partial_{A_0} \partial_{\dot A_1} \cdots \partial_{\dot A_{m-1}} \Phi$ for some $A_0, \dots, \dot{A}_{m-1}\in M$. So there is some $b\in M$ such that:
\[\forall^\star \alpha_0\in A_0 \forall^\star \alpha_1\in \dot{A}_1(\alpha_0) \cdots \forall^\star\alpha_{m-1}\in \dot{A}_{m-1}(\alpha_0, \dots, \alpha_{m-2}) \psi(b(\alpha_0, \dots, \alpha_{m-1}))\]
for every $\psi\in\Phi$.

We may assume that for all $x\in M$, $\forall^\star \alpha_0, \dots \forall^\star \alpha_{m-1}\ b(\alpha_0, \dots, \alpha_{m-1}) \neq x$. For all relevant ordinal (one which escape all the $\mathcal{I}$-null sets in the quantifiers), this is true by the maximality of $Z$ and the fact that $b$ is "forced" to be different than all members of $Z$ in $M$. We can complete the rest of the values (which are essentially elements outside the sets in $\range A_i$, $i< m$ and $A_0$) with dummy values.

Taking $\psi(x)$ to be $\varphi(a_0, \dots, a_{k-2}, x)$ (and omitting the evaluations in the $\dot A_i$) we get:
\[\forall^\star \alpha_0\in A_0 \forall^\star \alpha_1\in \dot A_1 \cdots \forall^\star\alpha_{m-1}\in \dot A_{m-1} \varphi(a_0, \dots, a_{k-2}, b(\alpha_0, \dots, \alpha_{m-1}))\]
By the definition of $\Phi$, replacing $a_{k-2}$ by the variable $x$, we obtain a formula in $\Phi$.
So we conclude that:
\[
\begin{matrix}\forall^\star \alpha_0\in A_0 \forall^\star \alpha_1\in \dot A_1 \cdots \forall^\star\alpha_{m-1}\in \dot A_{m-1} \\
\forall^\star \beta_0\in A_0 \forall^\star \beta_1\in \dot A_1 \cdots \forall^\star\beta_{m-1}\in \dot A_{m-1} \\
\varphi(a_0, \dots, a_{k-3}, b(\beta_0, \dots, \beta_{m-1}), b(\alpha_0, \dots, \alpha_{m-1}))
\end{matrix}\]
Repeating this process and relabeling:
\[\begin{matrix}
\forall^\star \alpha^0_0\in A_0 \forall^\star \alpha^0_1\in \dot A_1 \cdots \forall^\star\alpha^0_{m-1}\in \dot A_{m-1} \\
\forall^\star \alpha^1_0\in A_0 \forall^\star \alpha^1_1\in \dot A_1 \cdots \forall^\star\alpha^1_{m-1}\in \dot A_{m-1} \\
\vdots \\
\forall^\star \alpha^{k-1}_0\in A_0 \forall^\star \alpha^{k-1}_1\in \dot A_1 \cdots \forall^\star\alpha^{k-1}_{m-1}\in \dot A_{m-1} \\
\varphi(b(\alpha^0_0, \dots, \alpha^0_{m-1}),
b(\alpha^1_0, \dots, \alpha^1_{m-1}),\dots,
b(\alpha^{k-1}_0,\dots, \alpha^{k-1}_{m-1}))
\end{matrix}\]
Let us look on this last formula (which is true in $M$) and let us say that a set $D$ is \emph{solid} iff for all $a_0, \dots, a_{r-1} \in D$ ($0 \leq r \leq k$),
\[\begin{matrix}
\forall^\star \alpha^0_0\in A_0 \forall^\star \alpha^0_1\in \dot A_1 \cdots \forall^\star\alpha^0_{m-1}\in \dot A_{m-1} \\
\forall^\star \alpha^1_0\in A_0 \forall^\star \alpha^1_1\in \dot A_1 \cdots \forall^\star\alpha^1_{m-1}\in \dot A_{m-1} \\
\vdots \\
\forall^\star \alpha^{r - k - 1}_0\in A_0 \forall^\star \alpha^{r - k - 1}_1\in \dot A_1 \cdots \forall^\star\alpha^{r - k - 1}_{m-1}\in \dot A_{m-1} \\
\varphi(a_0, \dots, a_{r-1},
b(\alpha^0_0, \dots, \alpha^0_{m-1}),
b(\alpha^1_0, \dots, \alpha^1_{m-1}),
\dots,
b(\alpha^{k-r-1}_0,\dots, \alpha^{k - r - 1}_{m-1}))
\end{matrix}\]
The empty set is solid. Using Zorn's lemma in $M$, we can find a maximal solid set, $D\in M$.
\begin{lemma}
$M\models |D| = \kappa$.
\end{lemma}
\begin{proof}
Assume otherwise. We will find $c\in M$ and outside $D$ such that $\{c\}\cup D$ is solid. If $b^{-1}(D)$ is $\mathcal{I}$-positive then there must be some $d\in D$ such that $b^{-1}(\{d\})$ is $\mathcal{I}$-positive, and we assumed that this is not the case.

Let us iteratively narrow down, in $V$, the positive sets $A_0,\dot{A}_1,\dots$ and replace the quantifier $\forall^\star$ by $\forall$. We would still remain with positive sets. Moreover, we may assume that all of them are disjoint from $b^{-1}(D)$. Pick any $\alpha_0\in A_0$, $\alpha_1\in \dot{A}_1(\alpha_0), \dots, \alpha_{m-1}\in \dot A_{m-1}(\alpha_0, \dots, \alpha_{m-2})$. Let $c = b(\alpha_0, \dots, \alpha_{m-1})$. $c\notin D$ and for every $a_i\in D$, $\varphi(a_0, \dots, a_{k-2}, c)$, as wanted.
\end{proof}

We conclude that $D$ is a $\varphi$-cube of cardinality $\kappa$. But by elementarity, $D$ is a $\varphi$-cube in $V$ as well.
\end{proof}
In order to finish the proof we need to explain how to choose the sets $Z$. Here the diamond comes into the picture. Let $\langle S_\alpha \mid \alpha < \omega_1\rangle$ a $\diamondsuit(\omega_1)$ sequence. For convenience, we will assume that $S_\alpha$ is a pair $(A_\alpha, \phi_\alpha)$ where $A_\alpha\subseteq \alpha$ and $\phi_\alpha$ is a $\MM$-formula $Q^n \varphi$ with parameters in $\alpha$.

For every $i < \omega_1$, let us choose a bijection between $M_{i+1}\setminus M_i$ and $\omega \cdot (i+1) \setminus \omega \cdot i$. Connecting those bijections we obtain a continuous bijection between $M_{\omega_1}$ and $\omega_1$.

For every $\alpha$, if $A_\alpha$ is a maximal $\varphi_\alpha$-cube in $M_\alpha$, we define $\Phi_\alpha$ to be the type which was defined in lemma \ref{lem: type of maximal block}. Otherwise, we do nothing.

When enlarging $M_i$ to be $M_{i+1}$ we omit the types $\{\Phi_j \mid j \leq i\}$. Let $\psi = Q^n \varphi$ be a $\MM$-formula. Assume that $M_{\omega_1}$ satisfies $\psi$ and that $Z$ is a maximal $\varphi$-cube. Then on club many points, $Z\cap \alpha$ is a maximal $\varphi$-cube. Therefore, there is a point $\alpha < \omega_1$ such that $Z\cap \alpha = A_\alpha$, $\varphi_\alpha = \varphi$. But the corresponding type was not omitted, since it was enlarged, so $V\models\psi$, as needed.
\end{proof}
We remark that for successor of a regular cardinal $\kappa$, the existence of such an ideal $\mathcal{I}$ is equiconsistent with the existence of a measurable cardinal. Unfortunately, for successor of singular cardinals of countable cofinality, such ideal cannot exist.
\begin{question}\label{ques: mm reflection at aleph omega}
Is $\aleph_{\omega+1}\xrightarrow[\MM]{}\aleph_1$ consistent?
\end{question}
\section{Acknowledgements}
I would like to thank my advisor, Menachem Magidor, for helping me formulate the questions on which this paper is based, as well as suggesting directions for solving them. I would like to thank Shimon Garti for reading a previous version of the paper and suggesting many improvements for the presentation of the ideas of the paper. I would like to thank Monroe Eskew for pointing out an error in the early version of the paper.  

Lastly, I would like to thank the anonymous referee who helped to make this paper more correct and readable.

\providecommand{\bysame}{\leavevmode\hbox to3em{\hrulefill}\thinspace}
\providecommand{\MR}{\relax\ifhmode\unskip\space\fi MR }
\providecommand{\MRhref}[2]{%
  \href{http://www.ams.org/mathscinet-getitem?mr=#1}{#2}
}
\providecommand{\href}[2]{#2}

\end{document}